\theoremstyle{plain}% default
\newtheorem{theorem}{Theorem}[section]
\newtheorem{maintheorem}{Theorem}
\newtheorem{lemma}[theorem]{Lemma}
\newtheorem{proposition}[theorem]{Proposition}
\newtheorem*{conjecture}{Question}
\theoremstyle{definition}
\newtheorem{definition}[theorem]{Definition}%[section]
\theoremstyle{remark}
\def\R{\ensuremath{\mathbb R}}
\def\N{\ensuremath{\mathbb N}}
\def\e{\ensuremath{\text e}}
\def\E{\ensuremath{\mathbb E}}
\def\m{\ensuremath{m}}
\def\B{\ensuremath{\mathcal B}}
\def\C{\ensuremath{\mathcal C}}
\def\l{\ensuremath{\text {m}}}
\def\cv{\ensuremath{\text {Cor}}}
\def\ld{\ensuremath{\text{LD}}}
\def\dist{\ensuremath{d}}
\DeclareMathOperator{\p}{P_m}
\DeclareMathOperator{\interior}{int}
\def\Esup{\mathop{\hbox{\rm essup}}}
\def\Einf{\mathop{\hbox{\rm essinf}}}
\def\a{\alpha}
\def\osc{\mathop{\hbox{\rm osc}}}
\def\p{P_m}
\def\e{\varepsilon}
\numberwithin{equation}{section}
\begin{document}

\title[]{From Rates of mixing to recurrence times \\
via large deviations}

\author{José F. Alves}
\address{José F. Alves\\ Departamento de Matematica, Faculdade de Ciências da Universidade do Porto\\
Rua do Campo Alegre 687, 4169-007 Porto, Portugal}
\email{jfalves@fc.up.pt} \urladdr{http://www.fc.up.pt/cmup/jfalves}

\author{Jorge M. Freitas}
\address{Jorge M. Freitas\\ Departamento de Matematica, Faculdade de Ciências da Universidade do Porto\\
Rua do Campo Alegre 687, 4169-007 Porto, Portugal}
\email{jmfreita@fc.up.pt}
\urladdr{http://www.fc.up.pt/pessoas/jmfreita}

\author{Stefano Luzzatto}
\address{Stefano Luzzatto\\ Mathematics Department, Imperial College\\
180 Queen's Gate, London SW7, UK}
\email{luzzatto@ictp.it}
\urladdr{http://www.ictp.it/$\sim$luzzatto}
\curraddr{Abdus Salam International Centre for Theoretical Physics, Strada Costiera 11, 34151 Trieste, Italy. }

\author{ Sandro Vaienti}
\address{Sandro Vaienti\\ UMR-6207 Centre de Physique Th\'eorique, CNRS,
Universit\'es d'Aix-Marseille I, II, Universit\'e du Sud, Toulon-Var and FRUMAM, F\'ed\'ederation de
Recherche des Unit\'es de Math\'ematiques de Marseille\\ CPT, Luminy Case 907, F-13288
Marseille Cedex 9, France}
\email {vaienti@cpt.univ-mrs.fr}

\date{\today}

\thanks{Work carried out at CIRM, ICTP, Imperial College and University of Porto. JFA and JMF were partially supported by FCT through CMUP, by POCI/MAT/61237/2004 and by PTDC/MAT/099493/2008. JMF was partially supported by FCT grant SFRH/BPD/66040/2009.}

\subjclass{37A05, 37C40, 37D25}

\keywords{Gibbs-Markov structure, decay of correlations, large deviations}

\begin{abstract}
A classic approach in dynamical systems is to use particular geometric structures to deduce
statistical properties, for example the existence of  invariant measures with stochastic-like behaviour such as large deviations or decay of correlations.
Such geometric structures are generally highly non-trivial and thus a natural question is the extent to which this approach can be applied.  In this paper  we show that in many cases stochastic-like behaviour itself implies that the system has certain non-trivial geometric properties, which are therefore necessary as well as sufficient conditions for the occurrence of the statistical properties under consideration.
As a by product of our techniques we also obtain some new results on large deviations for certain classes of systems which include Viana maps and multidimensional piecewise expanding maps.
\end{abstract}

\maketitle

%\tableofcontents

\section{Introduction and statement of results}

Let  \(
f\colon M\to M\)
 be a piecewise $C^{1+}$ endomorphism defined on a
Riemannian manifold $M$, and let $\m$ denote a normalized volume
form on the Borel sets of $M$ that we call {\em Lebesgue measure.}
Here  \( C^{1+} \) denotes the class of continuously differentiable maps with H\"older continuous derivative and the precise conditions on the ``piecewise'' will be stated below.
A basic problem is the study of the statistical properties of the map \( f \), starting from questions about the
existence of an ergodic invariant measure \( \mu \) which is absolutely continuous with respect to Lebesgue to more sophisticated properties such as the rate of decay of correlations or large deviations with respect to this measure \( \mu \). In a fundamental paper \cite{Y3}, Young showed that the existence of such a measure \( \mu \) and, more significantly, the rate of decay of correlations of \( \mu \) can be deduced from the ``geometry'' of \( f \), more specifically from the existence and properties of a ``Young tower'' or ``induced Gibbs-Markov map''.
The verification of this geometric structure is of course generally  highly non-trivial, and over the last ten years a substantial number of papers have been devoted to this goal under various kinds of assumptions and using a variety of techniques \cite{Y2,Y3, BLS, ALP, Gou, Hol, DHL}. Combining these geometric constructions with the abstract results of Young, and more recent results concerning also other statistical properties such as large deviations \cite{RY, MN}, much more significant progress has been made in understanding the stochastic-like behaviour of deterministic dynamical systems in the last ten years than had been since the pioneering results on uniformly hyperbolic systems in the 60's and early~70's.

A natural question concerns the limitations of this approach. Might there be large classes of systems, or even specific ``pathological'' systems,
that exhibit certain statistical properties but for which this approach does not and cannot work because such systems just do not admit the required geometrical structures?
The main purpose of this paper is to show that in many cases such systems do not exist, and that in fact stochastic-like behaviour such as decay of correlations at certain rates is in itself sufficient to imply  the existence of an induced Gibbs-Markov map with the corresponding properties.
This geometry is therefore both necessary and sufficient for the statistical properties of the system.   We will now give the precise formulation of these results.

\subsection{Main definitions}
We start with the definition of a Gibbs-Markov structure and then give the formal definitions of the notion of decay of correlations and large deviations.

\begin{definition}\label{def:inducing-scheme}
We say that $f$ admits a
 \emph{Gibbs-Markov induced map}  if there exists a ball  \( \Delta \subset M \), a countable partition \( \mathcal P \) (mod 0) of \(
   \Delta \) into topological balls \( U \) with smooth boundaries,
   and a return time function \( R: \Delta \to \mathbb N \)
   constant on elements of  \(\mathcal P \) satisfying the following
   properties:
   \begin{enumerate}
   \item {\em {Markov}:} for each \( U\in\mathcal P \) and
   \( R=R(U)
   \),
   \(
   f^{R}: U \to \Delta
   \)
   is a \( C^{1+} \) diffeomorphism (and in particular a bijection).  Thus the
   induced map
   \( F: \Delta \to \Delta\)   given by  \( F(x) = f^{R(x)}(x) \) is
   defined almost everywhere and satisfies the classical Markov property.

   \item {\emph{Uniform expansion}:} there exists \( \lambda < 1 \) such that
   for almost all \( x\in \Delta \) we have
   \( \|DF(x)^{-1}\|\leq \lambda. \)
   In particular the \emph{separation time} \( s(x,y) \) given by the maximum
   integer such that \( F^{i}(x) \) and \( F^{i}(y) \) belong to the same
   element of the partition \( \mathcal P \) for all \( i\leq s(x,y) \),
   is defined and finite for almost all  \( x,y\in\Delta \).

   \item {\em {Bounded distortion}:}
   there exists \( K >0 \) such that for any points
   \( x,y\in \Delta \) with \( s(x,y) <\infty\) we have
   \[
  \left|\frac{\det DF(x)}{\det DF(y)}-1\right| \leq
  K \lambda^{-s(F(x),
   F(y))}.
   \]
\end{enumerate}
We define that ``tail'' of the return time function at time \( n \) as the set
 \[
\mathscr R_{n}=\{x\in\Delta:\; R(x)>n\}
\]
of points whose return time is larger than \( n \), and we say that the return time function is integrable if
  \[ \int R \;dm < \infty  .\]
\end{definition}

\begin{definition}[Expanding measure]
 We say that \( \mu \) is (regularly) \emph{expanding} if
\[
\log \|Df^{-1}\|\in L^{1} \quad \text{ and } \quad \int \log\|Df^{-1}\|d\mu < 0.
 \]
 \end{definition}

A first example of the way in which geometric structure is related to statistical properties is given by the relation between the above two definitions. Indeed, it is shown in \cite{AlvDiaLuz} that for large classes of maps including multidimensional maps with ``non-degenerate'' critical points the two structures are completely equivalent in the sense that \( f \) admits a Gibbs-Markov induced map if and only if it admits a regularly expanding absolutely continuous invariant probability measure.
In this paper we develop these general philosophy further by considering more refined statistical properties.

\begin{definition}[Decay of correlations]
Let \( \mathcal B_{1}, \mathcal B_{2} \) denote Banach spaces of real valued measurable functions defined on \( M \).
We denote the \emph{correlation} of non-zero functions $\varphi\in \mathcal B_{1}$ and  \( \psi\in \mathcal B_{2} \) with respect to a measure $\mu$ as
\[
\cv_\mu (\varphi,\psi):=\frac{1}{\|\varphi\|_{\mathcal B_{1}}\|\psi\|_{\mathcal B_{2}}}\left|\int \varphi\, \psi\, d\mu-\int  \varphi\, d\mu\int
\psi\, d\mu\right|.
\]
We say that we have \emph{decay
of correlations}, with respect to the measure $\mu$, for observables in $\mathcal B_1$ \emph{against}
observables in $\mathcal B_2$ if, for every $\varphi\in\mathcal B_1$ and every
$\psi\in\mathcal B_2$ we have
 $$\cv_\mu(\varphi,\psi\circ f^n)\to 0,\quad\text{ as $n\to\infty$.}$$
 \end{definition}

We will use the notation \( \lesssim  \) to mean  \( \leq  \)
up to multiplication by a constant depending only on the map \( f \).
We say that the  decay of correlations is \emph{exponential}, \emph{stretched exponential}, or \emph{polynomial} if
it is \( \lesssim \e^{-\tau n} \),
\( \lesssim  \e^{-\tau n^{\theta}} \) or
\( \lesssim  n^{-\beta} \) respectively,  for constants \( \tau, \theta, \beta \) which depend only on \( f \). Most of the time we shall choose \( \mathcal B_{2}=L^{p} \) for \( p=1 \) or \( p = \infty \), and \( \mathcal B_{1}=\mathcal H_{\alpha} \) the space of H\"older continuous functions with H\"older constant \( \alpha \).
Recall that  the  H\"older norm of an observable \( \varphi\in\mathcal H_\alpha \) is given by
\[
 \|\varphi\|_{\mathcal H_{\alpha}} := \|\varphi\|_{\infty}+
\sup_{x\neq y}\frac{|\varphi(x)-\varphi(y)|}{|x-y|^{\alpha}}.
 \]

%We will be
%interested in the random variables of the type
%$\{\varphi\circ f^n\}_{n\in\N_0}$ where the underlying probability space is
%$(M,\mu)$ and $\varphi:M\to \R$ is an observable.

\subsection{Local diffeomorphisms}

We start by stating our results in the setting of \( C^{1+} \) local diffeomorphisms.

\begin{maintheorem}\label{th:corr=>tower}
Let \( f: M\to M \) be a \(
C^{1+} \)  local diffeomorphism.
Suppose that
\( f \) admits an ergodic expanding acip $\mu$;
\begin{enumerate}
\item
if
there exists \( \beta > 1\) such that
\( \cv_{\mu} (\varphi, \psi \circ f^{n}) \lesssim n^{-\beta}
\)   for every \( \varphi \in \mathcal H_{\alpha} \) and \( \psi \in L^{\infty}(\mu) \),
then there is a Gibbs-Markov induced map  with
     \(
     \m(\mathscr R_{n}) \lesssim n^{-\beta+1}.
     \)
     \end{enumerate}
Suppose  moreover
that \( d\mu/d\l \) is uniformly bounded away from 0 on its support. Then
 \begin{enumerate}
 \item[(2)]
if there exist \( \tau, \theta > 0 \)
such that  \(
\cv_{\mu} (\varphi, \psi \circ f^{n}) \lesssim e^{-\tau n^{\theta}}\)
for every \( \varphi \in \mathcal H_{\alpha} \) and \( \psi \in L^{\infty}(\mu) \), then
there is a Gibbs-Markov  induced map   with \(
  \m(\mathscr R_{n}) \lesssim e^{-\tau'n^{\theta'}}\)
for some \( \tau'>0 \) and
     $\theta'={\theta}/{(\theta+2)}.$
\end{enumerate}
\end{maintheorem}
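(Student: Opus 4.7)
The plan is a three-step reduction: decay of correlations implies large deviation estimates for H\"older observables, which yield a density of \emph{hyperbolic times} via Pliss' lemma, from which one then constructs a Gibbs--Markov induced map along the lines of Alves--Luzzatto--Pinheiro. The passage from $\mu$-bounds to the required $m$-bounds on $\mathscr R_n$ is the last step; in case~(1) the polynomial rate is robust enough to absorb the loss, while in case~(2) the density lower bound supplies it directly.

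For the large deviations (Step~1), I would estimate the centered moments $\int (S_n\phi - n\bar\phi)^{2k}\, d\mu$ of a bounded H\"older observable $\phi$ by expanding the product and applying the correlation bound of $\mathcal H_\alpha$ against $L^\infty$ to each factor. After a combinatorial pairing argument, this gives a moment bound of order $C^k k^k$ times a factor reflecting the decay rate. Chebyshev plus optimization in $k$ then yields, for polynomial decay, $\mu(|S_n\phi/n - \bar\phi|>\epsilon)\lesssim n^{-\beta+1}$, and for stretched exponential decay, $\mu(|S_n\phi/n - \bar\phi|>\epsilon)\lesssim e^{-\tau' n^{\theta/(\theta+2)}}$. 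The specific exponent $\theta/(\theta+2)$ arises from jointly optimizing the moment order against the summability of the correlation tail.

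Applied to $\phi(x):=-\log\|Df^{-1}(x)\|$, which is H\"older and bounded for a $C^{1+}$ local diffeomorphism on a compact manifold and which has $\int\phi\,d\mu=c>0$ by the expanding hypothesis, Step~1 gives that
\[
 E_n := \bigl\{ x : \tfrac{1}{n}\textstyle\sum_{i=0}^{n-1}\log\|Df^{-1}(f^i x)\| > -c/2 \bigr\}
\]
has $\mu$-measure decaying at the large-deviation rate. By Pliss' lemma, every $x\notin E_n$ has a linear density of hyperbolic times in $[1,n]$. At each such time there is a neighbourhood of uniform radius on which the corresponding iterate is a bounded-distortion diffeomorphism onto a ball of uniform size. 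Fixing a small reference ball $\Delta$, one inductively carves out subsets that return fully onto $\Delta$ with uniform expansion, and declares $R$ to be the first such return; the resulting partition inherits Markovness, uniform expansion and the distortion estimate from the definition of hyperbolic times, and $\{R>n\}$ is contained (modulo a fixed number of extra iterates) in the set of points without any hyperbolic time up to time $\sim n$, whose measure was estimated above. A final comparison between $\mu$ and $m$ on $\mathrm{supp}(\mu)$ converts the bound: trivially in case~(2) via the density lower bound, and in case~(1) with only a polynomial loss that does not affect the exponent $\beta-1$.

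The main obstacle is the sharp exponent $\theta/(\theta+2)$ in the stretched-exponential large-deviation step, which depends on a careful simultaneous optimization of the moment order and the decay rate; once Step~1 is in place, the remaining construction follows a largely established route via Pliss' lemma and the induced-scheme formalism.
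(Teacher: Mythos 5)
Your overall strategy (decay of correlations $\Rightarrow$ large deviations for $\log\|Df^{-1}\|$ $\Rightarrow$ hyperbolic times via Pliss $\Rightarrow$ a Gibbs--Markov induced map as in Alves--Luzzatto--Pinheiro/Gou\"ezel) is the same as the paper's, but your implementation of the two middle steps diverges from it in ways that expose genuine gaps.

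For Step~1 you replace the paper's mechanism (a martingale decomposition of $S_n$ using the Perron--Frobenius cocycle, then Rio's inequality for the polynomial case and Azuma--Hoeffding for the stretched-exponential case) by a direct expansion of the $2k$-th moment and a ``combinatorial pairing argument.'' This is not merely a presentation choice. When you expand $\mathbb E[(\sum\varphi\circ f^i)^{2k}]$, each term $\int\prod_j\varphi\circ f^{i_j}\,d\mu$ can only be decorrelated in a single gap at a time: after peeling off the leftmost factor (which is H\"older after shifting), the remaining product is only $L^\infty$, not H\"older, so you cannot iterate the correlation bound across subsequent gaps without additional ideas. A one-gap estimate gives $\mathbb E[S_n^{2k}]\lesssim n^{2k-1}$ for $\beta>1$, hence only an $O(n^{-1})$ large-deviation bound. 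The ``pairing'' you invoke would need to be substantiated by a block/Bernstein or telescoping argument with H\"older-norm control; that is precisely what Rio's inequality packages for you and what your sketch elides. Tellingly, your claimed polynomial large-deviation rate is $n^{-\beta+1}$, one power weaker than the $n^{-\beta}$ that the paper proves (Theorem~\ref{th:DC=>LD}(1)); Melbourne's result, cited in the paper, also gives $n^{-\beta}$.

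For Step~3 you assert that $\{R>n\}$ is contained (up to a bounded time-shift) in the large-deviation set $E_{cn}$ for a single time $\sim n$, and you conclude the tail of $\mathscr R_n$ directly from that one-time estimate. This is not what the ALP/Gou constructions require, and the paper does not make this claim. Their tail of return times is controlled by the Lebesgue measure of $\{\mathcal E > n\}$, where $\mathcal E(x)$ is the ``stable'' first expansion time: the first $N$ such that the Birkhoff average is below $\lambda/2$ for \emph{all} $n\geq N$. Controlling $\mathcal E$ forces one to bound $\mu(\bigcup_{\ell\geq n}\{\tilde S_\ell>\epsilon\})$, and hence to sum the large-deviation tail over $\ell\geq n$ (the paper's Lemma~\ref{lem:ldtail}); in the polynomial case this is what produces the $+1$ loss and gives $n^{-\beta+1}$. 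Pliss' lemma guarantees a hyperbolic time before $cn$ if $x\notin E_{cn}$, but that single hyperbolic time is not sufficient for the bounded-distortion return: the construction needs expansion to persist along the whole orbit. Your final exponent $n^{-\beta+1}$ coincides with the theorem's only because your under-estimated large-deviation rate and your missing tail summation happen to cancel; both intermediate claims are incorrect as stated.

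One more structural point you do not address: the density lower bound on $\mathrm{supp}(\mu)$ is assumed in case~(2) only. The reason in the paper is that the stretched-exponential construction of \cite{Gou} is ``global'' and needs the density bounded below on the whole support, whereas the polynomial construction of \cite{ALP} is ``local'' and \cite{AlvDiaLuz} already provides a ball $\Delta$ on which the density is bounded below. Your remark that case~(1) incurs ``only a polynomial loss'' does not reflect the actual mechanism and should be replaced by this local/global dichotomy.
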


We emphasize that these are essentially direct converses of the remarkable results
of Young \cite{Y3} where she showed that
the rate of  decay  of the tail of the return time function implies a corresponding rate for the decay of  correlations. Thus we conclude that

\begin{quote}
\emph{the rate of decay of correlations is polynomial (resp. stretched exponential) if and only if there exists a Gibbs-Markov induced map with polynomial (resp. stretched exponential) tail.}
\end{quote}

Both in our results here and in \cite{Y3} something is lost in the actual value of the constants that appear in the exponents when passing from the assumptions to the conclusion, and thus we are not able to give a complete if and only if statement including the specific rates of decay. This is probably unavoidable as the ``actual'' rates which are intrinsic to the system probably depend on finer characteristics which can be controlled to some extent by changing the exponents but not avoided completely.
We remark also that the additional assumption on the density of \( \mu \) for the (stretched) exponential case is due to the use of different technique for constructing the induced map, as we shall explain in more detail below. It holds in various known examples such  as when the map is ``locally eventually onto'', i.e.  every open set of positive \( \mu \) measure covers the support of \( \mu \) in a finite number of iterates.

\subsection{Maps with critical/singular sets}\label{subsec:critical-points}

The results stated above are for local diffeomorphisms, and are already relevant and non-trivial in that setting,  but there exist many interesting examples which may fail to be local diffeomorphisms due to the presence of critical points
(where \( \det Df = 0 \)), singular points (where $Df$ does not exist or \( \|Df\|=\infty \)) or discontinuities of \( f \). We shall generally denote the collection of all such points as the critical/singular set. Most of the results which deduce statistical information from Gibbs-Markov maps apply equally to systems with a non-empty critical/singular set; in fact this is one of the strengths of this approach, the partition structure of Gibbs-Markov induced maps allows in some sense to avoid bad regions of the phase space. For the converse results, the situation is in principle more complicated because we need to show that a Gibbs-Markov map can still be constructed and that possible accumulation of images or preimages of the critical/singular set do not adversely affect the decay rates of tail of the return times.  We shall show that in fact most of
the results stated above do essentially apply under some mild assumption on the critical/singular set and on the density of the measure \( \mu \).

\begin{definition}
We say that \( x\) is a \emph{critical point} if  \( Df(x) \) is not invertible
and a \emph{singular point} if   \( Df(x) \)  does not exist.
We let \( \mathcal C \) denote the set of critical/singular points and
let  \(
\dist(x, \mathcal C) \) denote the distance between the point \( x\in M \)
and the set \( \mathcal C \).
We say that a set \( \mathcal C \) of critical/singular points is
\emph{non-degenerate} if
there are constants \( B, d>0 \) such that for all \( \epsilon > 0 \)
\begin{enumerate}
\item[(C0)]\quad
 \( \displaystyle
 \m\left(\{x: \dist(x, \mathcal C) \leq \epsilon\}\right) \leq B \epsilon^{d }
 \) \quad (in particular  \( \m(\mathcal C) =0 \));
\end{enumerate}
and there exists  $\eta>0$
such that for every $x\in
M\setminus\mathcal C $ and  \( v\in T_{x}M \) with \( \|v\|=1 \) we have
\begin{enumerate}
\item[(C1)]
\quad $ \displaystyle B^{-1}\dist(x,\mathcal C)^{\eta}\leq
\|Df(x)v\| \leq B\dist(x,\mathcal C )^{-\eta}$.
\end{enumerate}
Moreover, for all \( x,y \in M\setminus \mathcal C \) with \( \dist(x, \mathcal C)\leq \dist(y, \mathcal C) \) we have
\begin{enumerate}
\item[(C2)] \quad $\displaystyle{\left|\log\|Df(x)^{-1}\|-
\log\|Df(y)^{-1}\|\:\right|\leq B\left|\log(\dist(y,\C))-\log(\dist(x,\C))\right|}$;
\item[(C3)]
\quad $\displaystyle{\left|\log|\det Df(x)|- \log|\det
Df(y)|\:\right|\leq B\left|\log(\dist(y,\C))-\log(\dist(x,\C))\right|}$.
\end{enumerate}
\end{definition}
We remark that the conditions (C2) and (C3) imply the corresponding conditions used \cite{ABV, ALP, Gou}.
As long as the critical set satisfies the above mild non-degeneracy assumptions, we recover essentially the results stated above for local diffeomorphisms in the polynomial and stretched exponential case.

\begin{maintheorem}\label{th:CS-corr=>tower}
Let \( f: M\to M \) be a \(
C^{1+} \)  local diffeomorphism
outside
a nondegenerate critical set \( \mathscr C \).
Suppose that
\( f \) admits an ergodic expanding acip \( \mu \)
with \( d\mu/d\l \in L^{p}(m) \) for some \( p>1 \);
\begin{enumerate}
\item
if
there exists \( \beta > 1\) such that
\( \cv_{\mu} (\varphi, \psi \circ f^{n}) \lesssim n^{-\beta}
\)   for every \( \varphi \in \mathcal H_\alpha \) and \( \psi \in L^{\infty}(\mu) \),
then for any $\gamma>0$ there is a Gibbs-Markov  induced map such that
$\m(\mathscr R_{n}) \lesssim n^{-\beta+1+\gamma}$.
\end{enumerate}
Suppose moreover that  \( d\mu/d\l \)  is uniformly bounded away from 0 on its support;
\begin{enumerate}
\item[(2)]
if there exist \( \tau, \theta > 0 \)
such that  \(
\cv_{\mu} (\varphi, \psi \circ f^{n}) \lesssim e^{-\tau n^{\theta}}\)
for every \( \varphi \in \mathcal H_\alpha \) and \( \psi \in L^{\infty}(\mu) \), then
for any $\gamma>0$ there is a Gibbs-Markov  induced map   such that
     $\m(\mathscr R_{n}) \lesssim e^{-\tau'n^{\theta'-\gamma}}$ for
     $\theta'={\theta}/{(3\theta+6)}.$
\end{enumerate}
\end{maintheorem}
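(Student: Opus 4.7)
The plan is to adapt the strategy of Theorem~\ref{th:corr=>tower} to the presence of a nondegenerate critical/singular set $\C$, using hyperbolic times with slow recurrence as the geometric substitute for the uniform expansion exploited in the local diffeomorphism setting. The key inducing mechanism will be that of Alves--Luzzatto--Pinheiro \cite{ALP}, which builds a Gibbs-Markov induced map from a positive-measure set of points enjoying both exponential derivative growth and slow recurrence to $\C$; crucially, the tail $\m(\mathscr R_{n})$ is controlled by the rate at which the density of such hyperbolic times accumulates.

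The first step is to convert the decay of correlations hypothesis into a large deviations estimate for Birkhoff sums $S_{n}\varphi/n$ of H\"older observables $\varphi$ with $\int\varphi\,d\mu=0$. Following the arguments of \cite{MN,RY}, polynomial decay $n^{-\beta}$ of correlations yields large deviations of order $n^{-\beta+1}$, while stretched exponential decay $e^{-\tau n^{\theta}}$ yields a correspondingly slower stretched exponential large deviation bound whose exponent degrades in a controlled way relative to $\theta$.

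The second step is to apply this large deviation principle to two specific observables: first, $\varphi_{1}(x)=\log\|Df(x)^{-1}\|$, whose negative $\mu$-average ensures that hyperbolic times along orbits accumulate with a controlled density; and second, a truncation at a scale $\delta_{n}$ of $-\log\dist(x,\C)$, which controls recurrence to $\C$. Here the hypotheses play distinct roles: conditions (C1)--(C3) ensure that $\varphi_{1}$ and the truncated distance observable are H\"older on the complement of a neighbourhood of $\C$, so that the large deviations estimate applies; condition (C0) together with the $L^{p}$ integrability of $d\mu/d\l$ guarantees that truncation at scale $\delta_{n}$ distorts the $\mu$-integral by only a polynomial factor in $\delta_{n}$. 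In the stretched exponential case, the additional assumption that $d\mu/d\l$ is bounded below on its support is needed to transfer measure estimates between $\mu$ and $\m$, which is what allows the inducing construction of \cite{ALP} to yield the sharp tail bound required.

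The third step runs the inducing scheme of \cite{ALP} on the set of points at which $S_{n}\varphi_{1}$ is close to its $\mu$-average and the Birkhoff sum of the truncated distance observable is bounded by $\e n$ for small $\e$. Such points admit hyperbolic times with slow recurrence to $\C$ in the sense of \cite{ABV,ALP}, and the measure of the complement at time $n$ decays at the slower of the two large deviations rates obtained above, so that $\m(\mathscr R_{n})$ inherits this decay. The loss $\gamma$ in the polynomial case arises from the gap between large deviations for $\varphi_{1}$ and those for the unbounded distance observable after truncation. In the stretched exponential case the chain of approximations --- decay of correlations to large deviations, large deviations for $\varphi_{1}$ and for the truncation of $\varphi_{2}$, conversion to hyperbolic time density, and finally the inducing step --- compounds the loss and produces the exponent $\theta/(3\theta+6)$ in place of the $\theta/(\theta+2)$ of Theorem~\ref{th:corr=>tower}. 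The main obstacle will be to balance the truncation scale $\delta_{n}$ against the required hyperbolic time density so that this compounding is kept at precisely the claimed level and no worse.
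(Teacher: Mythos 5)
Your overall architecture matches the paper's: convert decay of correlations to large deviations, apply these to $\log\|Df^{-1}\|$ and an observable measuring recurrence to $\C$, then feed the resulting tail bounds into the inducing construction of \cite{ALP} (and, for the stretched exponential case, \cite{Gou}, which you should name explicitly since the bounded-below density assumption is there precisely to run Gou\"ezel's more global scheme). But there is a genuine gap in Step~2, and it is exactly the part that produces the claimed exponents.

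The large deviations estimate you obtain in Step~1 applies to observables in $\mathcal H_\alpha$, i.e.\ globally bounded and H\"older, whereas both $\phi_1=\log\|Df^{-1}\|$ and $-\log\dist(\cdot,\C)$ (and its $\delta$-cutoff $\phi_2^{(\delta)}$) are unbounded near $\C$. Saying that (C1)--(C3) make them ``H\"older on the complement of a neighbourhood of $\C$'' does not license applying the LD result; you need globally H\"older approximants, and then an argument that the approximation error is negligible. The paper's device is a \emph{value-level} truncation $\phi_{i,k}=\min(\phi_i,k)$ at level $k=k(n)$, which is globally H\"older with $\|\phi_{i,k}\|_{\mathcal H_\alpha}\lesssim \alpha^{-1}k\,e^{\alpha k}$ and $\|\phi_{i,k}\|_\infty\le k$; one then writes
\[
\mu\Bigl(\tfrac1n|S_n\phi_i|>\epsilon\Bigr)\le \mu\Bigl(\tfrac1n|S_n\phi_{i,k}|>\epsilon\Bigr)+n\,\mu(A_{i,k}),\qquad A_{i,k}=\{\phi_i\ge k\},
\]
controls $\mu(A_{i,k})\lesssim e^{-\zeta k}$ via (C0), (C1) and $d\mu/d\l\in L^p$, and balances the two terms. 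This is not a ``truncation at a scale $\delta_n$'' of the distance (the $\delta$ in $\phi_2^{(\delta)}$ is \emph{fixed}, chosen only so $\int\phi_2^{(\delta)}d\mu$ is small); the $n$-dependent cutoff is on the value, not the scale. Crucially, the balance requires LD estimates with \emph{explicit} dependence on $\|\varphi\|_{\mathcal H_\alpha}$, $\|\varphi\|_\infty$, and $\epsilon$, because both the H\"older norm and the sup norm of $\phi_{i,k}$ blow up with $k(n)$, and in the Azuma--Hoeffding step the effective $\tau'$ degrades like $\epsilon^2/k^2$. Plugging $k=n^{\theta'/3-\gamma}$ with $\theta'=\theta/(\theta+2)$ is precisely what yields the exponent $\theta'/3=\theta/(3\theta+6)$; similarly the choice of small $\alpha$ in the polynomial case produces the arbitrary $\gamma$ loss. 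Your proposal gestures at ``compounding losses'' and ``balancing truncation against hyperbolic time density'' but does not identify this norm-tracking in the LD constants as the mechanism, and without it the claimed exponents cannot be derived.
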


Thus also in the very general setting of maps with critical and singular points we  obtain a converse to Young's results and conclude that
the rate of decay of correlations is polynomial (resp. stretched exponential) if and only if there exists a Gibbs-Markov induced map with polynomial (resp. stretched exponential) tail.

\subsection{Large deviations}

A key step in our argument is to show that the rate of decay of correlations implies certain large deviation estimates. This is itself a result of independent interest partly also because it is a completely abstract result and we use no additional structure on $M$ or $f$ other than  $f:M\to M$ being measurable and \emph{nonsingular} (see Section~\ref{subsec:PF-Koop-CE}) with respect to an ergodic probability measure $\mu$ on $M$. In particular, we need no Riemannian structure on~$M$.

\begin{definition}[Large deviations]
 Given an ergodic probability measure \( \mu \) and
 \( \epsilon>0 \)
we define the \emph{large deviation} at time~$n$ of the time
average of the observable $\varphi$ from the spatial average
%$\int \varphi d\mu$
~as
\[
\ld_{\mu}(\varphi,\epsilon, n):=\mu\left(\left|\frac1 n \sum_{i=0}^{n-1}
\varphi\circ f^n-\int\varphi d\mu \right|>\epsilon\right).
\]
\end{definition}
By Birkhoff's ergodic theorem  the quantity \( \ld_{\mu}(\varphi,\epsilon,n) \to 0 \), as
\( n\to \infty \), and a relevant question also in this case  is the rate of this decay.
%We shall use the same notation as above to talk about exponential, stretched exponential, and polynomial rates of decay, although we note that in this setting the constants may depend on both \( \varphi \) and \( \epsilon \) (we shall always make this dependence clear and, when possible, explicit so there will be no possibility of confusion).
%Once again, Gibbs-Markov induced maps have played an important role in the last few years
%in obtaining large deviation rates because, analogously to the results for decay of correlations though by different techniques, it has been shown that rates of decay of the return time function imply corresponding rates of decay for large deviations \cite{RY, MN}.

\begin{maintheorem}\label{th:DC=>LD}

Let \( f: M\to M  \) preserve an ergodic probability measure \( \mu \)  with respect to which $f$ is nonsingular. Let  \( \B\subset L^{\infty}(\mu) \) be a Banach space with norm \( \|\cdot \|_{\B} \) and $\varphi\in \B$.
\begin{enumerate}
\item
 Let $\beta>0$ and suppose that  for all $\psi\in L^\infty(\mu)$ we have
 \(
 \cv_\mu(\varphi,\psi\circ f^n) \lesssim n^{-\beta}.
 \)
 Then, for every
 \( \epsilon>0 \), there exists  \( C=C(\varphi, \epsilon)>0 \)
 such that \(
 \ld_{\mu}(\varphi,
\epsilon,n) \leq  C n^{-\beta}.
 \)
\item
 Let $\theta, \tau>0$ and suppose that for all $\psi\in L^\infty(\mu)$ we have
 \(
 \cv_\mu(\varphi,\psi\circ f^n)\lesssim e^{-\tau n^\theta}.
 \)
Then, for every \( \epsilon>0 \)
there exist  \( C=C({\varphi, \epsilon})>0\) and
\( \tau'=\tau'({\tau,\varphi, \epsilon})>0 \)
such that
 \(
 \ld_{\mu}(\varphi,\epsilon,n)
 \leq C  e^{-\tau' n^{{\theta}/{(\theta+2)}}}.
 \)
 \end{enumerate}
\end{maintheorem}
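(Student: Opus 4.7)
I would first reduce to the centred case by replacing $\varphi$ with $\bar\varphi := \varphi - \int\varphi\,d\mu \in \B$, which does not change the large-deviation quantity and still satisfies the same decay of correlations against $L^\infty(\mu)$ (with constant absorbing $\|\varphi\|_\B$ and the mass shift). Writing $S_n := \sum_{i=0}^{n-1}\bar\varphi\circ f^i$, Markov's inequality applied to $|S_n|^{2q}$ gives, for every positive integer $q$,
\[
\ld_\mu(\varphi,\epsilon,n)\;\leq\;(n\epsilon)^{-2q}\int S_n^{2q}\,d\mu,
\]
so the whole argument reduces to bounding the $(2q)$-th moment of $S_n$ and then to a suitable choice (or optimization) of $q$.

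The technical core is a moment estimate of the form $\int S_n^{2q}\,d\mu \leq \Gamma(q)\,n^{q}$. Expanding the integral as a sum over ordered $2q$-tuples $0\leq i_1\leq\cdots\leq i_{2q}\leq n-1$, each summand $\int\prod_r\bar\varphi\circ f^{i_r}\,d\mu$ can be estimated by a ``peel off the leftmost factor'' device: after using invariance of $\mu$ to shift $i_1$ to $0$, the integrand has the form $\bar\varphi\cdot\Psi\circ f^{i_2}$ with $\Psi\in L^\infty(\mu)$, and the decay hypothesis applied with $\psi=\Psi$ gives, since $\int\bar\varphi\,d\mu=0$,
\[
\Bigl|\int \bar\varphi\cdot\Psi\circ f^{i_2}\,d\mu\Bigr|\;\lesssim\; \|\bar\varphi\|_{\B}\,\|\bar\varphi\|_\infty^{2q-1}\,i_2^{-\beta}.
\]
Iterating this bound one factor at a time and summing over ordered tuples, combined with the trivial $L^\infty$-bound on each block and with the summability of $\sum_k k^{-\beta}$ (for $\beta>1$) or $\sum_k e^{-\tau k^\theta}$, produces the desired diffusive bound $\Gamma(q)\,n^q$, with $\Gamma$ depending on $q$ and $\bar\varphi$.

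For part~(1) one then takes $q=\lceil\beta\rceil$ to deduce $\ld_\mu\leq C(\varphi,\epsilon)\,n^{-q}\leq C(\varphi,\epsilon)\,n^{-\beta}$; in the subcase $\beta\leq 1$ the second moment already suffices, giving $\int S_n^2\lesssim n^{2-\beta}$ and hence the claim directly from Markov. For part~(2) the same scheme applies, but one must now track the dependence of $\Gamma(q)$ on $q$ quantitatively: Laplace-type estimates for sums $\sum_k k^{j}e^{-\tau k^\theta}$ produce a bound of the shape $\Gamma(q)\leq A^{2q}\,q^{(1+2/\theta)q}$. Plugging this into Markov and optimizing $q$ as a function of $n$ -- the stationary point being $q\sim n^{\theta/(\theta+2)}$ -- yields the announced rate $\ld_\mu(\varphi,\epsilon,n)\lesssim e^{-\tau' n^{\theta/(\theta+2)}}$ with $\tau'$ depending on $\tau,\varphi,\epsilon$.

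The main obstacle is the moment estimate itself. Because $\B$ is not assumed to be closed under products or under composition with $f$, one cannot split the $2q$-point integral at its largest gap and apply decay of correlations to each half directly. Instead the peeling has to proceed one factor at a time, and the combinatorial accounting over ordered tuples must be done with care to reach the scaling $n^q$ rather than a much weaker bound that would arise from using only one gap per term. For part~(2) the additional difficulty is to make this accounting sharp in $q$, so that the Laplace optimization in $q$ reproduces the precise exponent $\theta/(\theta+2)$.
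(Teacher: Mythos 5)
Your proposal is a genuine departure from the paper: you attempt a direct combinatorial moment method (expand $\int S_n^{2q}\,d\mu$ over ordered tuples and peel off factors), whereas the paper uses a Gordin-type martingale--coboundary decomposition of $S_n$ into a martingale-difference part $\sum_j Z_j^{(k)}$ plus error terms involving $\chi^{(k)}=\sum_{j\le k}P^j\varphi$ and $P^k\varphi$, and then applies Rio's inequality (polynomial case) resp.\ Azuma--Hoeffding (stretched exponential case). The difference is not cosmetic: the martingale decomposition puts all the multi-time structure into quantities that are bounded in $L^\infty$ (the $Z_j^{(k)}$) or to which the decay hypothesis can be applied directly (the $P^k\varphi$ via Lemma~\ref{lem:Pjq}), so no algebra structure on $\B$ is ever needed.

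The gap in your approach is precisely the moment estimate $\int S_n^{2q}\,d\mu\le\Gamma(q)\,n^q$, which you identify as ``the main obstacle'' but do not actually establish, and which I do not believe can be established from the given hypotheses by the scheme you describe. After using invariance to set $i_1=0$, you can indeed write the integrand as $\bar\varphi\cdot\Psi\circ f^{i_2}$ with $\Psi\in L^\infty(\mu)$ and apply the decay hypothesis to obtain a factor $(i_2-i_1)^{-\beta}$ (or $e^{-\tau(i_2-i_1)^\theta}$). But that is the \emph{only} gap you can exploit: to peel a second factor you would need $\bar\varphi\cdot\bar\varphi\circ f^{i_2-i_1}\in\B$ (or some shifted rearrangement to lie in $\B$), and $\B$ is neither assumed to be an algebra nor closed under composition with $f$. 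With decay in a single gap, the sum over ordered tuples $0\le i_1\le\cdots\le i_{2q}\le n-1$ scales like $n\cdot\bigl(\sum_{G_1}G_1^{-\beta}\bigr)\cdot n^{2q-2}\sim n^{2q-1}$ for $\beta>1$, not $n^q$; Markov then gives only $\epsilon^{-2q}n^{-1}$, which falls far short of $n^{-\beta}$ as soon as $\beta>1$ --- and $\beta>1$ is exactly the regime the rest of the paper relies on. The part of your argument for $\beta\le 1$ via the second moment is fine (there $\int S_n^2\lesssim n^{2-\beta}$ really is a single correlation sum), but that case is not the one that matters. Similarly, in part~(2) the Laplace optimization $q\sim n^{\theta/(\theta+2)}$ rests on the unproven claim $\Gamma(q)\lesssim A^{2q}q^{(1+2/\theta)q}$, which would again require several gaps of decay per tuple. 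The standard way around this obstruction is exactly the paper's martingale decomposition, which replaces the multi-gap accounting by a single application of a martingale moment/concentration inequality; if you want to salvage a moment method you would have to either assume $\B$ is a Banach algebra closed under $U_\mu$ (strictly stronger hypotheses, as in Theorem~\ref{th:piecewise}) or work with higher-order correlation/cumulant bounds that are not part of the given data.
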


In the course of the proof of this theorem we shall obtain explicit formulas for the constants which appear in the large deviation bounds. These formulas will play an important role in the application of the results to the the proof of the other theorems.
We remark that a version of the polynomial case has been proved in \cite[Theorem 1.2 and Lemma 2.1]{M}, however due to our need for a very explicit form of the constants   we include a fully worked out proof here.

We also give below a straightforward application of this result to obtain an estimate for the large deviation for the well known class of Viana maps.
There have been several recent results concerning large deviations for nonuniformly expanding maps, see \cite{AraPac, MN, RY}, but remarkably none of them actually apply to this specific class of maps.

\subsubsection{Viana maps}
An important class of nonuniform expanding dynamical
    systems (with critical sets) in dimension greater than one was
    introduced by Viana in \cite{V}. This has served as a model
  for some relevant results on the ergodic
    properties of
    non-uniformly expanding maps in higher dimensions; see
    \cite{Alv00,AA,ABV,AV}.
 This class of  maps
 can be described as
follows. Let $a_0\in(1,2)$ be such that the critical point $x=0$
is pre-periodic for the quadratic map $Q(x)=a_0-x^2$. Let
$S^1=\mathbb R/\mathbb Z$ and $b:S^1\rightarrow \mathbb R$ be a Morse function, for
instance, $b(s)=\sin(2\pi s)$. For fixed small $\alpha>0$,
consider the map
 \[ \begin{array}{rccc} \hat f: & S^1\times\mathbb R
&\longrightarrow & S^1\times \mathbb R\\
 & (s, x) &\longmapsto & \big(\hat g(s),\hat q(s,x)\big)
\end{array}
 \]
 where  $\hat q(s,x)=a(s)-x^2$ with
$a(s)=a_0+\alpha b(s)$, and $\hat g$ is the uniformly expanding
map of the circle defined by $\hat{g}(s)=ds$ (mod $\mathbb Z$) for some
large integer $d$. In fact, $d$ was chosen greater or equal to 16
in \cite{V}, but recent results in \cite{BST} showed that some
estimates in \cite{V} can be  improved and $d=2$ is enough. It is
easy to check that for $\alpha>0$ small enough there is an
interval $I\subset (-2,2)$ for which $\hat f(S^1\times I)$ is
contained in the interior of $S^1\times I$. Thus, any map $f$
sufficiently close to $\hat f$ in the $C^0$ topology has
$S^1\times I$ as a forward invariant region. We consider from here
on these maps restricted to $S^1\times I$ and we call any such map a \emph{Viana map}.
It was shown in \cite{Alv00,AV} that Viana maps have a unique ergodic expanding acip \( \mu \).

\begin{theorem}
Let \( f \) be a Viana map and let \( \mu \) be its unique expanding acip. Then, for every \( \epsilon>0 \) there exists \( \tau, C>0 \) such that for all \( \varphi \in \mathcal H_{\alpha} \) we have
\[ LD_{\mu}(\varphi, \epsilon, n) \leq C e^{-\tau n^{1/5}}.
 \]
\end{theorem}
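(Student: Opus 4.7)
The plan is to reduce this directly to Theorem \ref{th:DC=>LD}(2). All that is needed is to feed in the correct decay rate of correlations for Viana maps. Since Viana maps fit into the framework of Theorem \ref{th:CS-corr=>tower} (they are local diffeomorphisms outside a nondegenerate critical set, and admit a unique expanding acip by Alves--Viana), but here we go in the opposite direction: we use the already established upper bound on the decay of correlations.

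First I would recall that for Viana maps one has a Gibbs--Markov induced map whose tail satisfies $\m(\mathscr R_{n})\lesssim e^{-c\sqrt{n}}$. This was proved by Alves--Luzzatto--Pinheiro (building on Alves--Viana), and in particular implies via Young's theorem \cite{Y3} that for any $\varphi\in\mathcal H_{\alpha}$ and any $\psi\in L^{\infty}(\mu)$,
\[
\cv_{\mu}(\varphi,\psi\circ f^{n})\lesssim e^{-\tau n^{1/2}}
\]
for some $\tau>0$ depending only on $f$. Thus the hypothesis of Theorem \ref{th:DC=>LD}(2) is satisfied with $\theta=1/2$.

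Next, a direct computation of the exponent produced by Theorem \ref{th:DC=>LD}(2) gives
\[
\frac{\theta}{\theta+2}=\frac{1/2}{1/2+2}=\frac{1}{5},
\]
so for every $\epsilon>0$ and every $\varphi\in\mathcal H_{\alpha}$ there exist constants $C=C(\varphi,\epsilon)>0$ and $\tau'=\tau'(\tau,\varphi,\epsilon)>0$ with
\[
\ld_{\mu}(\varphi,\epsilon,n)\leq C\,e^{-\tau' n^{1/5}},
\]
which is the desired conclusion. I do not expect any real obstacle: the only non-routine ingredient is the invocation of the stretched exponential tail bound for the Young tower of Viana maps, which is by now standard; once this input is in place the theorem is a direct corollary of the abstract Theorem \ref{th:DC=>LD}.
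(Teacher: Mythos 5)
Your proof is correct and essentially follows the paper's own argument: the only cosmetic difference is that you recover the $\theta=1/2$ stretched-exponential decay of correlations for Viana maps via the Young tower tail together with the tower-to-correlation implication, whereas the paper simply cites Gou\"ezel \cite{Gou} directly for that decay rate. The computation $\theta/(\theta+2)=1/5$ and the invocation of Theorem~\ref{th:DC=>LD}(2) are exactly as in the paper.
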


As observed for example in \cite{ALP}, Viana maps satisfy the non-degeneracy conditions on the critical set. Moreover,
it is proved in \cite{Gou} that every Viana map exhibits stretched exponential decay of correlations, with \( \theta=1/2  \), for H\"older continuous functions against \( L^{\infty}(\mu) \) functions.
The theorem is then  a direct application of part (2) of Theorem \ref{th:DC=>LD}.

\subsection{Exponential estimates}

The results given above do not yield exponential estimates and it is not clear  at the moment if this is just a technical issue or there is some deeper reason. However it turns out that we can get exponential estimates if we assume that the correlation decay is uniformly summable against all \( L^{1} \) observables.

\begin{maintheorem}\label{th:exp}
Let \( f: M\to M  \) preserve an ergodic probability measure \( \mu \)  with respect to which $f$ is nonsingular. Let  \( \B\subset L^{\infty}(\mu) \) be a Banach space with norm \( \|\cdot \|_{\B} \) and $\varphi\in \B$.
Suppose that there exists \( \xi(n) \) with $\sum_{n=0}^\infty\xi(n)<\infty$ such that for all $\psi\in L^1(\mu)$ we have
 \(
  \cv_\mu(\varphi,\psi\circ f^n) \leq \xi(n).
  \) Then
\begin{enumerate}
\item
there exists
 \(\tau=\tau(\varphi)>0 \) and,  for every \( \epsilon>0 \),
  there exists \( C=C({\varphi, \epsilon})> 0 \) such that
 \(
 \ld_{\mu}(\varphi,\epsilon,n) \leq C e^{- \tau n}.
 \)
 \end{enumerate}
 Suppose moreover that \( f \) is a \( C^{1+} \) local diffeomoprhism, \( d\mu/d\l \) is uniformly bounded away from 0 on its support, and \( \B=\mathcal H_{\alpha} \) is the space of H\"older continuous maps.  Then
 \begin{enumerate}
 \item[(2)]
there exists a Gibbs-Markov  induced map   with
\(
  \m(\mathscr R_{n}) \lesssim e^{-\tau'n}\)
for some \( \tau'>0 \).
\end{enumerate}
\end{maintheorem}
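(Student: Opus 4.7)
The plan for (1) is a Gordin-type martingale--coboundary decomposition in which, thanks to the $L^1$ form of the correlation hypothesis, both the coboundary and the martingale term are bounded; a Hoeffding--Azuma inequality for reverse martingales then yields exponential concentration. Part~(2) will follow by feeding the exponential LD estimate from (1) into the hyperbolic-times tower construction already used for Theorem~\ref{th:corr=>tower}(2).

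For (1), set $\varphi' := \varphi-\mu(\varphi)$ and let $L$ denote the transfer operator of $f$ with respect to $\mu$. Taking the supremum over $\|\psi\|_{L^1(\mu)}\leq 1$ in the correlation hypothesis dualizes to $\|L^n\varphi'\|_{L^\infty(\mu)}\leq \xi(n)\|\varphi'\|_\B$, so summability of $\xi$ makes
\[
u := \sum_{k\geq 1} L^k\varphi'
\]
converge absolutely in $L^\infty$ with $\|u\|_\infty\leq \|\varphi'\|_\B\sum_{k\geq 1}\xi(k)$. Setting $m := \varphi' + u - u\circ f$, the identities $L(u\circ f)=u$ and $Lu=u-L\varphi'$ give $Lm=0$, i.e.\ $\E[m\mid f^{-1}\mathcal B]=0$; by $f$-invariance this propagates to $\E[m\circ f^i\mid f^{-(i+1)}\mathcal B]=0$ for every $i\geq 0$. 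Consequently the backwards partial sums
\[
\tilde R_k := \sum_{i=k}^{n-1} m\circ f^i \qquad (k=0,\ldots,n)
\]
form a reverse martingale on the decreasing filtration $\mathcal F_k:=f^{-k}\mathcal B$, with increments uniformly bounded by $\|m\|_\infty$. The Hoeffding--Azuma inequality for reverse martingales (obtained from the usual forward version by time reversal) then yields
\[
\mu\!\left(|M_n|>t\right) \leq 2\exp\!\left(-\frac{t^2}{2n\|m\|_\infty^2}\right), \qquad M_n := \tilde R_0 = \sum_{i=0}^{n-1} m\circ f^i.
\]
Since $S_n\varphi' = M_n + (u\circ f^n - u)$ by telescoping and $|u\circ f^n - u|\leq 2\|u\|_\infty$, for $n\geq 4\|u\|_\infty/\epsilon$ the event $\{|S_n\varphi'|>\epsilon n\}$ forces $\{|M_n|>\epsilon n/2\}$, and one obtains $\ld_\mu(\varphi,\epsilon,n)\leq 2\exp(-\epsilon^2 n/(8\|m\|_\infty^2))$, the remaining small $n$ being absorbed into a single constant $C(\varphi,\epsilon)$.

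For (2), I apply (1) to the H\"older observable $\varphi:=\log\|Df^{-1}\|$, which belongs to $\mathcal H_\alpha$ since $f$ is $C^{1+}$ and which has $\int\varphi\,d\mu<0$ by the expanding hypothesis. The resulting exponential LD estimate is plugged into the hyperbolic-times construction (Alves--Bonatti--Viana) used for Theorem~\ref{th:corr=>tower}(2): the first hyperbolic time dominates the return time of the induced Gibbs--Markov map, and its tail is controlled by the measure of points whose Birkhoff average of $\varphi$ up to step $n$ exceeds some value strictly between $\int\varphi\,d\mu$ and $0$. Exponential LD thus gives $\mu(\mathscr R_n)\lesssim e^{-\tau n}$, and the hypothesis that $d\mu/d\l$ is bounded away from $0$ on its support transfers this to $\m(\mathscr R_n)\lesssim e^{-\tau' n}$.

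The delicate point is confirming that $L^\infty$-summability of the transfer-operator iterates is strong enough to place both $u$ and $m$ in $L^\infty$ (rather than merely $L^2$, as in the classical Gordin setting) and the clean setup of the reverse Hoeffding--Azuma inequality; once this is in place, the derivation of (2) from (1) runs parallel to the argument already used for Theorems~\ref{th:corr=>tower} and~\ref{th:CS-corr=>tower}.
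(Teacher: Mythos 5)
Your proof of part (1) matches the paper's argument essentially step by step: the $L^1$--$L^\infty$ duality gives summability of $\|P^n\varphi\|_\infty$ (the paper's Lemma~\ref{lem:sum}), your $u$ and $m$ are the paper's $\chi$ and $\xi$, and the reverse-martingale/Azuma--Hoeffding framing you use is, after reindexing $j\mapsto n-j$, identical to the paper's forward martingale-difference sequence $Z_j=\xi\circ f^{n-j}$ with filtration $\mathcal F_j=f^{-(n-j)}\mathcal M$. Part (2) likewise follows the paper's route via Theorem~\ref{th:LD=>tower} (Lemma~\ref{lem:ldtail} plus the inducing constructions of [ALP, Gou]), applied to $\log\|Df^{-1}\|$, so the proposal takes essentially the same approach as the paper.
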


%Gibbs-Markov maps satisfy the conditions stated above and therefore in particular exhibit exponential decay of correlations against \( L^1 \) functions. We do not know however if this is also true for any maps which admit a Gibbs-Markov induced map with exponential tail.

There are some fairly general classes of piecewise expanding maps which exhibit summable (in fact exponential) decay of correlations against
\( L^1 \) functions, and to which therefore these results apply. We give some explicit examples in Appendix \ref{ap.pecs}. Here we state some  general conditions in terms of the properties of the Perron-Frobenius operator. We will show that all the examples of Appendix \ref{ap.pecs} satisfy these conditions and in particular that Gibbs-Markov maps satisfy these conditions.  However the following question is still an open problem.

\begin{conjecture}
Suppose there is a Gibbs-Markov induced map with \( \l(\mathscr R_{n}) \lesssim e^{-\tau'n}\)
for some \( \tau'>0 \). Is there \( \xi(n) \) with $\sum_{n=0}^\infty\xi(n)<\infty$ such that
 \(
  \cv_\mu(\varphi,\psi\circ f^n) \leq \xi(n)
  \)
 for every \( \varphi \in  \mathcal H_{\alpha} \) and \( \psi \in L^{1} (\mu)\)?
\end{conjecture}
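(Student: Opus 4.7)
I would prove the conjecture by lifting the dynamics to a Young tower over the given Gibbs--Markov induced map, invoking Young's spectral-gap theorem on a Banach space whose norm dominates $\|\cdot\|_{L^\infty}$ on the tower, and then descending to $(M,f,\mu)$ via a duality argument in order to convert spectral decay on the tower into decay of correlations against $L^1(\mu)$ observables on $M$. Concretely, construct the standard tower $(\hat M,\hat f,\hat m)$ with semiconjugacy $\pi\colon\hat M\to M$ satisfying $f\circ\pi=\pi\circ\hat f$ and $\pi_*\hat\mu=\mu$, where $\hat\mu$ is the unique $\hat f$-invariant probability and $h=d\hat\mu/d\hat m$ is bounded above and away from zero on its support. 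The exponential tail $\m(\mathscr{R}_n)\lesssim e^{-\tau'n}$ yields (by Young's construction) a quasi-compact action of the tower transfer operator $\hat\L$ on a Banach space $\B$ of piecewise H\"older functions in the separation-time metric, so that
\begin{equation*}
\hat\L^n \phi \;=\; h\,\textstyle\int \phi\,d\hat m \;+\; R_n\phi, \qquad \|R_n\phi\|_{L^\infty(\hat m)} \leq C\sigma^n\|\phi\|_{\B}
\end{equation*}
for some $\sigma\in(0,1)$; the essential feature needed is that the norm on $\B$ dominates $\|\cdot\|_{L^\infty(\hat m)}$.

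Given $\varphi\in\mathcal{H}_\alpha(M)$ and $\psi\in L^1(\mu)$, I would first check that $(\varphi\circ\pi)\,h\in\B$ with $\|(\varphi\circ\pi)\,h\|_{\B} \lesssim \|\varphi\|_{\mathcal{H}_\alpha}$. This uses that $h$ is a fixed element of $\B$ and that $\B$ is stable under pointwise multiplication, together with the fact that uniform expansion of the induced Gibbs--Markov map makes $\pi$ H\"older in the separation-time metric, so $\varphi\circ\pi$ is H\"older on each partition element of $\hat M$ with constant linearly controlled by $\|\varphi\|_{\mathcal{H}_\alpha}$. Expanding the covariance via the semiconjugacy and the adjoint identity for $\hat\L$,
\begin{equation*}
\int\varphi\cdot\psi\circ f^n\,d\mu - \textstyle\int\varphi\,d\mu\,\int\psi\,d\mu \;=\; \int (\psi\circ\pi)\,\bigl[\hat\L^n((\varphi\circ\pi)h) - h\textstyle\int\varphi\,d\mu\bigr]\,d\hat m.
\end{equation*}
Dividing and multiplying by $h$ inside the integrand and using $\|h^{-1}\|_{L^\infty(\hat\mu)}<\infty$ rewrites the right-hand side as a $d\hat\mu$-integral bounded in absolute value by $\|h^{-1}\|_{L^\infty(\hat\mu)}\,C\sigma^n\|\varphi\|_{\mathcal{H}_\alpha}\,\|\psi\circ\pi\|_{L^1(\hat\mu)}$. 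Since $\|\psi\circ\pi\|_{L^1(\hat\mu)}=\|\psi\|_{L^1(\mu)}$, normalizing yields $\cv_\mu(\varphi,\psi\circ f^n)\lesssim\sigma^n$, and taking $\xi(n)=C\sigma^n$ answers the question affirmatively.

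The hard part will be making three requirements on the tower Banach space $\B$ coexist: (a) quasi-compactness for $\hat\L$ on $\B$ with an exponential spectral gap; (b) continuous embedding $\B\hookrightarrow L^\infty(\hat m)$, which is what allows the duality step to produce a sup-norm bound on the transfer-operator error; and (c) closure under multiplication by $h$ and containment of all lifts $\varphi\circ\pi$ for $\varphi\in\mathcal{H}_\alpha(M)$, with norms linearly controlled by $\|\varphi\|_{\mathcal{H}_\alpha}$. Different formulations of Young's construction use different norms---weighted sup norms on levels, $L^1$-type weak norms on a quotient symbolic tower, and so on---and some of them establish (a) with only a weaker form of (b), for instance convergence in $L^1(\hat m)$ rather than $L^\infty(\hat m)$, which does not suffice for our duality against $L^1(\mu)$. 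Assembling a variant of the spectral theory in which (a)--(c) all hold simultaneously, and separately verifying that $h$ is bounded below on its support so that the division by $h$ in the duality step remains controlled, is the main technical hurdle left open by the conjecture.
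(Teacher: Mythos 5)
The statement you address is posed in the paper as an open Question, not a theorem; there is no paper proof to compare against. Your write-up is a reasonable sketch of the natural attack, and you correctly put your finger on where it is incomplete, but the way you phrase the last paragraph --- ``the main technical hurdle left open by the conjecture'' --- understates the matter: the three requirements you list as (a)--(c) are not verification work inside an otherwise complete argument, they \emph{are} the problem, and no Banach space on the tower simultaneously satisfying them is known to exist. The key display $\|R_n\phi\|_{L^\infty(\hat m)}\le C\sigma^n\|\phi\|_{\B}$ on which the rest of your argument hinges is asserted, not established. In the standard implementations of Young's exponential-tail theory the function space on the infinite-height tower carries a \emph{weighted} separation-time H\"older norm: the weight is tuned so that the density $h$ of $\hat\mu$ with respect to $\hat m$ (which is bounded above and below but does not decay up the tower) lies in the space, so that the embedding into the weak space is compact, and so that the Lasota--Yorke inequality closes despite $\hat f$ being an isometry off the base. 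These constraints pull against an embedding of the space into $L^\infty(\hat m)$, and what the spectral gap does deliver is control of $R_n\phi$ in a weak norm; pairing that with $\psi\circ\pi\in L^\infty(\hat m)$ reproduces Young's decay against $L^\infty(\mu)$ observables, which is the reverse of the duality you need.

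Note also that the paper's own Perron--Frobenius framework (conditions (1)--(6) of Section~\ref{se.perron}) cannot just be applied to the tower map $\hat f$. Condition (6) is precisely your requirement (b), but $\hat f$ is not among the systems verified in Appendix~\ref{ap.pecs}: the tower extension of a Gibbs--Markov induced map fails the big-images hypothesis (on a level $\ell\ge1$ the map sends a column cell onto the next cell of the same column, whose $\hat m$-measure is not bounded below over a countable partition), and $\hat f$ is not uniformly expanding away from the base, so neither the Markov-map example nor the multidimensional piecewise-expanding example applies. What remains is to build a bespoke Banach space on the tower on which all of (1)--(6) hold at once for $\hat f$; that is exactly what the open Question asks for and what your proposal leaves unresolved. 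You should present your argument as a diagnosis of why the question is hard, with the $L^\infty(\hat m)$ remainder bound flagged as an unproved hypothesis rather than a consequence of the exponential tail.
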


If the question above has an affirmative answer, then the results given above would yield essentially an equivalence also in the exponential case between
exponential decay of correlations and having an induced Gibbs-Markov map with exponential tail.

\subsubsection{Perron-Frobenius}\label{se.perron} Let  $M$ be a measurable space (at this stage $M$ needs not to be a Riemannian manifold)  endowed with a reference probability measure  $m$  on a $\sigma$-algebra  $\mathcal{M}$, and let $f: M\to M$ be a measurable map. Consider the usual Perron-Frobenius operator
  $\p: L^1(m)\rightarrow  L^1(m)$ as  in Appendix~\ref{ap:A}.
 Assume that there is a seminorm $|\cdot|_{\mathcal B}$  on $L^1(m)$ such that:
\begin{enumerate}
\item $\mathcal B=\{\varphi\in L^1(m): |\varphi|_{\mathcal B}<\infty\}$ is a Banach space with the norm
$$
\|\cdot\|_{\mathcal B}= |\cdot|_{\mathcal B}+ \|\cdot\|_{L^1(m)};
$$
\item $\mathcal B$ is {adapted} to $L^1(m)$:  the inclusion $\mathcal B \hookrightarrow  L^1(m)$ is compact;
\item $\p(\mathcal B)\subset \mathcal B$ and $\p|_\mathcal B$ is bounded with respect to the norm $\|\cdot\|_{\mathcal B}$;
\item  {Lasota-Yorke} inequality holds: there are $n_0\ge 1$, $0<\alpha<1$ and $\beta>0$ such that
$$
|\p^{n_0}\varphi|_{\mathcal B}\le \alpha |\varphi|_{\mathcal B}+\beta \|\varphi\|_{L^1(m)}, \quad\forall \varphi\in \mathcal B;
$$
\item $\mathcal B$ is a  Banach  algebra with the norm $\|\cdot\|_{\mathcal B}$; in particular, there is $C>0$ such that
$$\|\varphi\psi \|_{\mathcal B}\le C \|\varphi\|_{\mathcal B} \|\psi\|_{\mathcal B},\quad\forall\,\varphi,\psi\in\mathcal B;$$
\item $\mathcal B$ is continuously injected in $L^{\infty}(m)$:
there exist a constant $C'>0$ such that
$$\|\varphi\|_{L^{\infty}(m)}\le C'\|\varphi\|_B,\quad\forall \varphi\in \mathcal B.$$
\end{enumerate}

\begin{theorem}\label{th:piecewise}
 Let $f:M\to M$ verify conditions (1)-(6).
 Then \(f\) exhibits exponential decay of correlations against observables in \( L^1(\mu)\). Assume moreover that \( d\mu/d\l \)  is uniformly bounded away from 0.  Then, in particular, for every \( \epsilon>0 \) there exists \( \tau, C>0 \) such that for all \( \varphi \in \mathcal B \) we have
\[ LD_{\mu}(\varphi, \epsilon, n) \leq C e^{-\tau n}.
 \]
\end{theorem}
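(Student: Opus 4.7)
The plan is to establish a spectral gap for the transfer operator $\p$ acting on $\B$ by a classical Ionescu--Tulcea--Marinescu (Hennion) argument, translate it to exponential decay of correlations via Perron--Frobenius duality, and finally invoke Theorem~\ref{th:exp}(1) to obtain the large deviation bound.

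First I would combine conditions (2)--(4): the compact inclusion $\B\hookrightarrow L^1(m)$ together with the Lasota--Yorke inequality
\[
|\p^{n_0}\varphi|_{\B}\le \alpha|\varphi|_{\B}+\beta\|\varphi\|_{L^1(m)}
\]
places us in Hennion's framework and yields quasi-compactness of $\p|_{\B}$ with essential spectral radius at most $\alpha^{1/n_0}<1$. Its peripheral spectrum consists of finitely many eigenvalues of modulus $1$, each of finite multiplicity; since $\p$ is a positive Markov operator, the mean ergodic averaging of any $\B$-density produces a non-negative fixed point $h\in\B$, hence an acip $\mu = h\,dm$. Restricting to an ergodic component (and passing to a power to kill any non-trivial cyclic peripheral group if necessary), the $1$-eigenspace becomes one-dimensional and $1$ is the sole peripheral eigenvalue, so
\[
\p^n\phi = h\int \phi\, dm + R^n\phi,\qquad \|R^n\|_{\B}\le C_0\rho^n,
\]
for some $\rho\in(0,1)$.

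Next, for $\varphi\in\B$ the Banach algebra condition (5) gives $h\varphi\in\B$ with $\|h\varphi\|_{\B}\le C\|h\|_{\B}\|\varphi\|_{\B}$. For $\psi\in L^1(\mu)$ the duality between $\p$ and composition with $f$ yields
\[
\int \varphi\,(\psi\circ f^n)\,d\mu = \int \psi\cdot \p^n(h\varphi)\,dm = \int\varphi\,d\mu\int\psi\,d\mu + \int \psi\, R^n(h\varphi)\,dm.
\]
Because $h$, and hence $R^n(h\varphi)=\p^n(h\varphi)-h\int\varphi\,d\mu$, is supported in $\mathrm{supp}(\mu)$, we may replace $\psi$ by its restriction to $\mathrm{supp}(\mu)$ in the error term. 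Condition (6) then gives $\|R^n(h\varphi)\|_{L^\infty(m)}\le C'\|R^n(h\varphi)\|_{\B}\lesssim \rho^n\|\varphi\|_{\B}$, while the hypothesis that $h$ is bounded away from $0$ on $\mathrm{supp}(\mu)$ converts the $L^1(\mu)$ norm of the restricted $\psi$ into an $L^1(m)$ norm up to a multiplicative constant. Putting these pieces together shows $\cv_\mu(\varphi,\psi\circ f^n)\lesssim \rho^n$, a summable bound $\xi(n)=Ce^{-\tau n}$ in the sense of Theorem~\ref{th:exp}.

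Finally, Theorem~\ref{th:exp}(1) applied with this $\xi$ immediately delivers the announced inequality $\ld_{\mu}(\varphi,\epsilon,n)\le Ce^{-\tau n}$ for every $\varphi\in \B$ and every $\epsilon>0$. The main obstacle is the mismatch of norms: the spectral-gap estimate naturally produces an $L^\infty(m)$ bound on $R^n(h\varphi)$, whereas decay of correlations against $L^1(\mu)$ observables is read against a measure that may differ drastically from $m$ outside $\mathrm{supp}(\mu)$. The support-plus-boundedness-from-below argument sketched above is precisely what resolves this mismatch and explains why the extra lower bound on $h$ is imposed for the large-deviation conclusion. A secondary but equally important point is verifying that $h\in\B$ and that the peripheral spectrum of $\p|_{\B}$ reduces to the simple eigenvalue $\{1\}$; this rests on Perron--Frobenius-style positivity together with an ergodicity/mixing reduction, standard but worth articulating carefully in the abstract functional-analytic setting of conditions (1)--(6).
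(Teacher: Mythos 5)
Your proposal is correct and follows essentially the same route as the paper: quasi-compactness of $\p$ on $\B$ from the Lasota--Yorke inequality plus compact inclusion (the paper invokes Ionescu--Tulcea--Marinescu where you cite Hennion, but these are the same mechanism), reduction to a single mixing component with spectral decomposition $\p^n = \Pi + Q^n$ and $\|Q^n\|_{\B}\lesssim q^n$, then the duality $\int\varphi\,(\psi\circ f^n)\,d\mu = \int\psi\,\p^n(h\varphi)\,dm$, the Banach-algebra condition (5) to put $h\varphi\in\B$, condition (6) to bound $\|Q^n(h\varphi)\|_{L^\infty(m)}$, and the lower bound $h\ge c$ to pass from $\|\psi\|_{L^1(m)}$ to $\frac1c\|\psi\|_{L^1(\mu)}$, finishing with Theorem~\ref{th:exp}(1). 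Your extra care about restricting $\psi$ to $\operatorname{supp}(\mu)$ is a reasonable clarification, though the paper implicitly treats the density as bounded below $m$-a.e., which sidesteps the support-invariance subtlety.
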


The proof the first part of this Theorem is relatively standard and we include it in the Appendix in Section~\ref{sec:piecewise}. The second part then follows by a direct application of Theorem~\ref{th:exp}.

\subsubsection{Intermittent maps}
Finally we give an application of our results to show that one-dimensional intermittent maps \emph{cannot} exhibit summable decay of correlations agains \( L^1\) functions.
Let \( f: S^{1} \to S^{1}  \) be a \( C^{1+} \) local diffeomorphism of the circle satisfying \( f'(x)>1 \) for all \( x\neq 0 \) and such that
\[ f(x) \approx x+|x|^{1+\gamma}
 \]
in some neighbourhood of \( 0 \),  for some \( \gamma\in (0,1) \). We remark that the notation  \( \approx \) is used here to indicate the fact that \( f \) in a neighbourhood of 0 is equal to \( x+|x|^{1+\gamma} \) plus higher order terms and the first and second derivative of the higher order terms are still of higher order.

This is a very well known and well studied class of maps, first introduced in \cite{PomMan}. They are well known to have a unique expanding acip \( \mu \).
Their decay of correlations has been studied in detail and been shown to be at least polynomial for several classes of observables in several papers, we mention for example
\cite{LivSauVai} for \( C^{1} \) observables, in \cite{Y3} for H\"older continuous observables.

 \begin{theorem}\label{th:sum}
Suppose there exists \( \xi(n) \) such that
\[ \cv_{\mu}(\varphi, \psi\circ f^{n}) \leq \xi(n) \]
  for all \( \varphi\in \mathcal H_{\alpha} \) and \( \psi\in L^{1}(\mu)  \).
Then
\[ \sum_{n=1}^{\infty} \xi(n) = \infty.
 \]
 \end{theorem}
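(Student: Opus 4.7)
I argue by contradiction. Suppose $\sum_{n\ge 1}\xi(n) < \infty$. My first step is to verify the hypotheses of Theorem~\ref{th:exp}(2): $f$ is a $C^{1+}$ local diffeomorphism of $S^{1}$ (since $f'(0)=1$ and $f'>1$ off $0$), the Banach space is $\mathcal H_\alpha$, and the Pomeau--Manneville density $h=d\mu/dm$ is well known to satisfy $h(x)\sim c\,|x|^{-\gamma}$ near $0$ while being continuous and strictly positive off $0$; hence $h$ is uniformly bounded below on its support $S^{1}$. Applying Theorem~\ref{th:exp}(2) produces a Gibbs--Markov induced map $F=f^{R}$ on some ball $\Delta \subset S^{1}$ with exponential tail $m(\mathscr R_n)\lesssim e^{-\tau' n}$.

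The contradiction comes from the neutral fixed point at $0$, which I use to force the polynomial lower bound $m(\mathscr R_n) \gtrsim n^{-1/\gamma}$. The key local estimate, obtained by comparison with the ODE $\dot y = y^{1+\gamma}$, is that a point at distance $x$ from $0$ needs $\asymp x^{-\gamma}$ iterations to leave any fixed neighborhood of $0$, and that $\log|f'(y)|\asymp |y|^\gamma$ there. Combining this with the uniform expansion condition $\|DF^{-1}\|\le \lambda$ and with $R\ge R_1$ (the first-return time to $\Delta$), every $z\in\Delta$ whose forward orbit enters a small neighborhood of $0$ at distance $x$ at some step $k$ must satisfy $R(z)\gtrsim x^{-\gamma}$. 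If $0\in\interior\Delta$, then $\Delta\cap[0,n^{-1/\gamma}]$ is already contained in $\{R>cn\}$ and has Lebesgue measure $\gtrsim n^{-1/\gamma}$; if $0\notin\overline\Delta$, topological mixing of $f$ provides a fixed $k$ and a smooth branch of $f^{-k}|_\Delta$ covering a neighborhood of $0$, and bounded distortion of $f^{k}$ then gives
\[
m\bigl\{z\in\Delta:f^{k}(z)\in[0,n^{-1/\gamma}]\bigr\}\gtrsim n^{-1/\gamma},
\]
a subset of $\{R>n\}$ for all large $n$. Either way $m(\mathscr R_n)\gtrsim n^{-1/\gamma}$, contradicting Step~1.

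The main obstacle is managing the case analysis on the geometry of the unknown ball $\Delta$ cleanly and uniformly in $n$, together with justifying the lower bound $R(z)\gtrsim x^{-\gamma}$ directly from the Gibbs--Markov axioms rather than from an ad hoc induced scheme. A streamlined but less self-contained alternative is to apply Young's tower theorem \cite{Y3} forward: an exponential tail for the Gibbs--Markov induced map yields exponential decay of correlations for $\mathcal H_\alpha$ against $L^{\infty}(\mu)$, which contradicts the classical polynomial lower bound (Sarig, Hu, Gou\"ezel) on correlations for Pomeau--Manneville maps valid for every $\gamma\in(0,1)$, giving an immediate contradiction with no further geometric analysis of $\Delta$.
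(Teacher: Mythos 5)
Your concluding ``streamlined alternative'' is exactly the paper's proof: summability of $\xi$ activates Theorem~\ref{th:exp} (whose hypotheses you check correctly --- note also that $\mu$ must be an \emph{expanding} acip for Theorem~\ref{th:LD=>tower} to apply, which holds since the Pomeau--Manneville Lyapunov exponent is positive, and that $\log|f'|$ is only $\gamma$-H\"older, so one needs $\alpha\le\gamma$), producing a Gibbs--Markov induced map with exponential tail; Young's theorem \cite{Y3} then yields exponential decay of correlations for H\"older observables against $L^\infty(\mu)$, which contradicts the polynomial lower bound $\cv_\mu(\varphi,\psi\circ f^n)\gtrsim n^{1-1/\gamma}$ of Hu~\cite{Hu} and Sarig~\cite{Sar}. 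Your preferred route --- deriving $m(\mathscr R_n)\gtrsim n^{-1/\gamma}$ directly from the neutral fixed point and contradicting the exponential tail without a detour through correlations --- is a genuinely different argument; if completed, it would prove the return-time analogue of the Hu--Sarig estimate, namely that \emph{any} Gibbs--Markov tower for a Pomeau--Manneville map has at least polynomially heavy tails, which is worth having.

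However, as you yourself anticipate, this route has two gaps that the paper's version sidesteps. First, the assertion ``$R(z)\gtrsim x^{-\gamma}$ for $z$ at distance $x$ from $0$'' does follow from the uniform-expansion axiom, but only after a computation along the escape orbit: with $x_j\approx(x^{-\gamma}-c\gamma j)^{-1/\gamma}$ one gets
$\log|Df^{T}(z)|\approx\tfrac{1+\gamma}{c\gamma}\log\bigl(x^{-\gamma}/(x^{-\gamma}-c\gamma T)\bigr)$,
which reaches $\log\lambda^{-1}$ only once $T\ge c' x^{-\gamma}$; this step needs to be written out rather than just invoked. Second, and more seriously, in the case $0\notin\overline{\Delta}$ the set $A_n=\{z\in\Delta:f^{k}(z)\in[0,n^{-1/\gamma}]\}$ is \emph{not} automatically contained in $\{R>n\}$: the partition element of $\mathcal P$ containing $z$ may have return time $R(U)\le k$, and then the fact that $f^{k}(z)$ lands near $0$ places no constraint on $R(z)$. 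Definition~\ref{def:inducing-scheme} does not say that $R$ is the first return time, and a priori allows elements of $\mathcal P$ with small return time and arbitrarily small measure to accumulate inside any part of $\Delta$. To close this gap you would have to show $m(A_n\cap\{R\le k\})=o(n^{-1/\gamma})$, which needs structure beyond the abstract Gibbs--Markov axioms. The paper goes through decay of correlations precisely to avoid this geometric analysis and outsource the hard lower bound to \cite{Hu,Sar}.
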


This follows by contradiction from  Theorem \ref{th:exp}. Indeed, this states that summable decay of correlations  against all \( L^{1} \) functions implies the existence of a Gibbs-Markov induced map with exponential tail of the return times. By \cite{Y3} this implies exponential decay of correlations for all H\"older continuous observables. However,  it is proved in \cite{Hu}, see also \cite{Sar}, that the decay of correlations cannot be faster than polynomial: there exist Lipschitz functions \( \varphi, \psi: S^{1}\to \mathbb R \) such that \( \cv_{\mu}(\varphi, \psi\circ f^{n}) \geq C n^{1-1/\gamma} \). This gives rise to a contradiction and thus Theorem \ref{th:sum} holds.

\subsection{Strategy and overview}
In  Section \ref{sec:decdev}
we prove Theorem \ref{th:DC=>LD} and part (1) of Theorem~\ref{th:exp}, namely the fact that decay of correlations imply large deviations. These are abstract results of an essentially probabilistic nature and can be formulated in terms of bounds on sums of random variables. In particular we shall apply here a result of Azuma and Hoeffding (see Appendix~\ref{ap:A}) on large deviations for a sequence of martingale differences. To apply these arguments in the exponential case we
need to use that \( (P^n_\mu\varphi )_n\) is summable in \( L^\infty(\mu) \) for every \( \varphi\in L^\infty(\mu) \), where \( P_\mu \) is the Perron-Frobenius operator, and we can show this under the assumption of summable decay of correlation against \(L^1(\mu)\) functions as stated in Theorem~\ref{th:exp}.

The application of Theorem \ref{th:DC=>LD} to the proof of Theorem
\ref{th:corr=>tower} and to the second part of Theorem~\ref{th:exp} is formulated in Theorem \ref{th:LD=>tower} and proved in Section \ref{sec:locdif}. This is relatively straightforward since in the case of \(C^{1+} \) local diffeomorphisms, the function \( \log \|Df^{-1}\| \) is H\"older continuous and therefore, from Theorem~\ref{th:DC=>LD} satisfies large deviations either with a polynomial or stretched exponential rate or, from the first part of Theorem~\ref{th:exp}, with an exponential rate. We show that such large deviation rates for
\( \log \|Df^{-1}\| \) imply the assumptions of the constructions of Gibbs-Markov induced maps in \cite{ALP, Gou} which therefore yield the desired result.

The situation in the presence of critical points or singularities is significantly more complicated. We still eventually show that the assumptions of \cite{ALP, Gou} are satisfied, but in this case we need large deviation estimates for both functions \( \log \|Df^{-1}\| \) and
\( -\log \dist(x, \mathcal C) \), where \( \dist(x, \mathcal C) \) denotes the distance to the critical/singular set, neither of which in this case are H\"older continuous. In Theorem~\ref{th:LD=>tower2} in Section~\ref{sec:unbdev} we assume for the moment large deviation estimates (polynomial, stretched exponential, and exponential) for these two functions and show how to obtain the construction of the Gibbs-Markov maps with the required tail estimates, and thus in particular
deduce the proof of Theorems~\ref{th:CS-corr=>tower}  in this setting.

In Proposition \ref{prop:DC=>LD-nonempty-CS} which we prove in Section ~\ref{sec:nonhold}, we use an approximation argument to obtain large deviation estimates for the two particular functions we are interested in, even though they are not H\"older continuous,  using the fact that we have the estimates for H\"older continuous functions. Technically, it is exactly at this point that we lose the exponential estimates and are thus not able to prove an exponential version of the second part of Theorem~\ref{th:exp} for systems with critical or singular points.

In Appendix~\ref{ap:A}, we give standard definitions and notation concerning Perron-Frobenius operators and martingales, and state the two main probabilistic theorems which we apply in the paper. In Appendix~\ref{ap.pecs} we give several classes of piecewise expanding maps which satisfy the assumptions of Theorem~\ref{th:piecewise} above.

We conclude this introduction with some brief remarks concerning the assumption that \( d\mu/dm \) is bounded away from zero on its support, which appears in the Theorems \ref{th:corr=>tower}, \ref{th:CS-corr=>tower} and~\ref{th:exp}, specifically when dealing with stretched exponential and exponential estimates. This is due to some quite subtle differences between the construction of induced Markov maps in \cite{ALP}
where polynomial estimates are obtained, and \cite{Gou}, where stretched exponential and exponential (as well as polynomial) estimates are obtained. Both papers work with essentially the same set of assumptions but the construction of \cite{Gou} is in some sense more ``global", thus requiring an assumption on the density \( d\mu/dm \) on all of its support. On the other hand, it is possible to prove that the density
\( d\mu/dm \) is  necessarily bounded away from zero in some small ball, and this is sufficient for the construction of \cite{ALP}, which is more ``local". It is not therefore clear at this point whether this assumption is merely technical.

\section{Decay of correlations imply large deviations}
\label{sec:decdev}

In this section we prove Theorem \ref{th:DC=>LD}. Assume that  $f:M\to M$ is measurable and nonsingular with respect to an ergodic acip  $\mu$ defined on a $\sigma$-algebra $\mathcal M$ of $M$, and let \( \B\subset L^{\infty}(\mu) \) be a Banach space. Let
\( \varphi\in \mathcal B \)  and suppose
 without loss of generality that \( \int \varphi d\mu = 0 \).
For \( n\in\mathbb N \) we write
\begin{equation}
\label{eq:def-Sn}
S_{n}= \sum_{i=0}^{n-1} \varphi \circ f^{i} .
\end{equation}
We are therefore
interested in an upper bound for \( \mu(|S_{n}|>\epsilon n) \).
The idea of the proof of Theorem~\ref{th:DC=>LD} is to write $S_n$  as the sum of martingale differences plus some error terms that can be controlled by means of the assumption on the rate of decay of correlations. Then, everything boils down to bound the sum of martingale differences using two abstract results known as the Rio and Azuma-Hoeffding inequalities. For the statement of these inequalities  see Theorems~\ref{th:Rio} and \ref{th:Azuma-Hoeffding}, as well as other standard notions that we will use in this section which are collected for convenience in Appendix~\ref{ap:A}. In particular, we shall use repeatedly properties (P1)-(P5) about Perron-Frobenius and Koopman operators
\[  P_\mu: L^{1}(\mu)\to L^{1}(\mu) \quad \text{ and } \quad U_\mu: L^{\infty}(\mu) \to L^{\infty}(\mu) .
\]
For notational simplicity we shall omit the measure $\mu$ in the notation for these operators and spaces. Also,
we denote by $\|\cdot\|_p$ the usual norm in $L^p(\mu)$ for $1\le p\le\infty$.
%
%the indices in the operators $U_\mu$ and $P_\mu$ whenever it is clear with respect to which measure we are using them. Also, when no confusion arises, we will not write $M$ nor $\B$ when specifying the $L^p$ spaces.
%Now we introduce some useful notation.
We define for \(  j=1,\ldots, n \)
\begin{equation}
\label{eq:def-filtration-special}
\mathcal F_j= f^{-(n-j)}\mathcal M.
\end{equation}
Observe that the measurability of $f$ does indeed imply that $\mathcal F_1\subset\mathcal F_2\subset\ldots\subset\mathcal F_n$.
Then  let
\[
X_j:= \varphi \circ f^{n-j}
\]
 Notice that the measurability of $f$ implies  that each \( X_{j} \) is measurable with respect to \( \mathcal F_{j} \) and therefore
 $\{\mathcal F_j\}_{j=1}^n$  indeed forms a filtration as defined in Appendix~\ref{ap:A}.
For every $k\in\N$  let
\begin{equation}\label{def:chi-k}
\chi^{(k)} :=\sum_{j=1}^k P^j\varphi
\quad\text{ and } \quad
\xi^{(k)}:=\varphi+\chi^{(k)}-\chi^{(k)}\circ f-P^k\varphi,
\end{equation}
and, for every \( j=1,...,n \),
\begin{equation}\label{def:Z-k-j}
Z^{(k)}_j:=\xi^{(k)}\circ f^{n-j}.
\end{equation}
It is then a tedious but straightforward exercise to check that
\begin{equation}\label{Sn}
X_j = Z^{(k)}_j+
(\chi^{(k)}\circ f^{n-j+1}-\chi^{(k)}\circ f^{n-j})+ (P^{k}\varphi)\circ f^{n-j},
\end{equation}
and therefore
\begin{equation}\label{Sn1}
S_n=\sum_{j=1}^n X_j=\sum_{j=1}^n Z^{(k)}_j+\chi^{(k)}\circ f^n-\chi^{(k)}+\sum_{j=1}^nP^k\varphi\circ f^{n-j}.
\end{equation}
We emphasize that this equality holds for every \( k \). At the moment \( k \) is a free parameter, but we shall eventually choose
\( k \) as a function of \( n \) in order to get the final estimates.
The terms above will be used in the polynomial and stretched exponential case. For the exponential case we use a similar decomposition essentially taking \( k=\infty \).
Then we write
\[
\chi :=\sum_{i=1}^\infty P^i\varphi
\quad\text{ and } \quad
\xi:=\varphi+\chi-\chi\circ f,
\]
and, for every \( j=1,...,n \),
\begin{equation}\label{eq:Zj}
Z_j:=\xi\circ f^{n-j}
\end{equation}
We remark that we will show in the exponential case that \( \chi \) is well defined and in fact lies in \( L^{\infty} \).
It is  straightforward to check that
\begin{equation}\label{Snexp}
S_n=\sum_{j=1}^n Z_j+\chi\circ f^n-\chi.
\end{equation}

 \begin{lemma}\label{lem:mart}
 \( \{Z^{(k)}_{j}\}_{j=1}^n \) is a sequence of martingale differences.
 \end{lemma}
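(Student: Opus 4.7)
The plan is to verify the two defining properties of a martingale difference sequence adapted to the filtration $\{\mathcal F_j\}_{j=1}^n$: first, that each $Z^{(k)}_j$ is $\mathcal F_j$-measurable and integrable; second, that $\E_\mu[Z^{(k)}_j \mid \mathcal F_{j-1}] = 0$ for $j \geq 2$ (and $\E_\mu[Z^{(k)}_1] = 0$).

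Measurability and integrability are immediate. Since $\varphi \in \mathcal B \subset L^\infty(\mu) \subset L^1(\mu)$ and $P$ preserves $L^1(\mu)$ by the standard properties (P1)--(P5) recalled in Appendix~\ref{ap:A}, each of the four summands defining $\xi^{(k)}$ lies in $L^1(\mu)$. Composing with $f^{n-j}$ preserves $L^1(\mu)$ by $f$-invariance of $\mu$, and the resulting function is manifestly measurable with respect to $f^{-(n-j)}\mathcal M = \mathcal F_j$.

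The key step is the conditional expectation identity. The fact I would invoke is that for any $g \in L^1(\mu)$ and any integer $\ell \ge 0$,
$$\E_\mu\bigl[g \circ f^\ell \,\big|\, f^{-(\ell+1)}\mathcal M\bigr] = (Pg) \circ f^{\ell+1},$$
which follows directly from the defining duality $\int (Pg)\, h\, d\mu = \int g \cdot (h\circ f)\, d\mu$ combined with the $f$-invariance of $\mu$ (test against $h = \mathbf{1}_B \circ f^{\ell+1}$). Applied with $g = \xi^{(k)}$ and $\ell = n-j$, together with $\mathcal F_{j-1} = f^{-(n-j+1)}\mathcal M$, this gives
$$\E_\mu\bigl[Z^{(k)}_j \,\big|\, \mathcal F_{j-1}\bigr] = \bigl(P\xi^{(k)}\bigr) \circ f^{n-j+1},$$
so the claim reduces to checking that $P\xi^{(k)} \equiv 0$. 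Using linearity of $P$, the relation $P(h\circ f) = h$ (valid since $\mu$ is $f$-invariant, so $P\mathbf 1 = \mathbf 1$), and the definition of $\chi^{(k)}$, one obtains a telescoping cancellation
$$P\xi^{(k)} = P\varphi + \sum_{i=2}^{k+1} P^i\varphi - \sum_{i=1}^{k} P^i\varphi - P^{k+1}\varphi = 0.$$

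There is no real obstacle: the definition of $\xi^{(k)}$ is precisely engineered so that this cancellation holds, the subtraction of the term $P^k\varphi$ compensating exactly for the truncation of the series $\chi^{(k)}$. The only point requiring care is matching the direction of the filtration with the correct form of the conditional expectation identity: $\mathcal F_j$ increases in $j$ because it is the preimage algebra under a smaller power of $f$, so conditioning on $\mathcal F_{j-1}$ corresponds to projecting onto the coarser algebra $f^{-(n-j+1)}\mathcal M$, which is exactly the setting covered by the identity above.
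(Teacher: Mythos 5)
Your proof is correct and follows essentially the same route as the paper: reduce $\E[Z^{(k)}_j\mid\mathcal F_{j-1}]=0$ to $P\xi^{(k)}=0$ via the Perron--Frobenius/Koopman duality (your stated identity is exactly the combination of properties (P3) and (P4) used in the paper), and then verify the telescoping cancellation. The only point you gloss over is the base case $\E(Z^{(k)}_1)=0$, which the paper checks explicitly but which also follows from your identity applied with $j=1$ (conditioning on $\mathcal F_0=f^{-n}\mathcal M$) and then taking total expectation.
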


 \begin{proof}
Clearly, $Z_j^{(k)}$ is measurable with respect to $\mathcal F_j$, for all $j=1,\ldots, n$. By property~(P1)  and the invariance of $\mu$ we have
\begin{align*}
\E(Z_1^{(k)})&=\int \varphi \circ f^{n-1}d\mu + \int  \chi^{(k)} \circ f^{n-1}d\mu-\int \chi^{(k)} \circ f^{n}d\mu-\int P^k\varphi \circ f^{n-1}d\mu\\
&=\int \varphi d\mu + \int  \chi^{(k)} d\mu-\int \chi^{(k)}d\mu-\int P^k \varphi d\mu=0.
\end{align*}
Hence, it remains to show that \( \E(Z_{j}^{(k)}|\mathcal F_{j-1}) = 0 \)  for every \( j=2,.., n \).  Using (P3) we have
\begin{align}
P\xi^{(k)}&=P\varphi+P\chi^{(k)}-PU\chi^{(k)}-P^{k+1}\varphi \nonumber
\\
& = P\varphi+P\chi^{(k)}-\chi^{(k)}-P^{k+1}\varphi
\nonumber
\\ &=P\varphi+\left(\sum_{n=1}^k P^{n+1}\varphi-\sum_{n=1}^k P^{n}\varphi\right)-P^{k+1}\varphi
\nonumber
\\&
=P\varphi+(P^{k+1}\varphi-P\varphi)-P^{k+1}\varphi
=0
\label{P0}
\end{align}
By property (P4) we have $\E(\cdot|f^{-(i+1)}(\mathcal M))=U^{i+1}P^{i+1}$, then
using property (P3) and \eqref{P0} it follows that for all $i=0,\ldots,n-2$,
\begin{equation}\label{mart}
\begin{aligned}
\E(Z_{n-i}^{(k)}|\mathcal F_{n-i-1}) &=
\E(\xi^{(k)}\circ f^i|f^{-(i+1)}(\mathcal M))
=U^{i+1}P^{i+1}U^i\xi^{(k)}
%\\ & =U^{i+1}P(P^{i}U^i)(\xi^{(k)})=
=U^{i+1}P\xi^{(k)}=0,
\end{aligned}
\end{equation}
which completes the proof that \( \{Z^{(k)}_{j}\}_{j=1}^n \) is a sequence of martingale differences.
\end{proof}

\begin{lemma}\label{lem:Pjq}
For any \( j\in\mathbb N \), \( q\geq 1 \) and  $\psi=\mbox{sgn} (P^j\varphi)$ we have
\begin{equation*}
\|P^j\varphi\|_q \leq
\cv_{\mu}(\varphi, \psi\circ f^{j})^{1/q}\|\varphi\|_{\B}^{1/q}\|\varphi\|_\infty^{1-1/q}.
\end{equation*}
\end{lemma}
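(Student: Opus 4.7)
The plan is to first establish the case $q=1$ by duality, and then pass to general $q\ge 1$ by interpolation between $L^{1}$ and $L^{\infty}$.

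\emph{Step 1 ($q=1$).} Since $\psi=\mathrm{sgn}(P^{j}\varphi)$ satisfies $|\psi|\le 1$, we have
\[
\|P^{j}\varphi\|_{1}=\int (P^{j}\varphi)\,\psi\,d\mu .
\]
By the $L^{2}$--duality between Perron--Frobenius and Koopman operators (property (P3) in the appendix), iterated $j$ times,
\[
\int (P^{j}\varphi)\,\psi\,d\mu = \int \varphi\,(\psi\circ f^{j})\,d\mu .
\]
Because we have assumed $\int\varphi\,d\mu=0$, this equals
\[
\int \varphi\,(\psi\circ f^{j})\,d\mu-\int\varphi\,d\mu\int\psi\circ f^{j}\,d\mu ,
\]
which, by the very definition of $\mathrm{Cor}_{\mu}$, is bounded in absolute value by $\mathrm{Cor}_{\mu}(\varphi,\psi\circ f^{j})\,\|\varphi\|_{\B}\,\|\psi\circ f^{j}\|_{\infty}$. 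As $\|\psi\circ f^{j}\|_{\infty}\le 1$, this yields
\[
\|P^{j}\varphi\|_{1}\le \mathrm{Cor}_{\mu}(\varphi,\psi\circ f^{j})\,\|\varphi\|_{\B} .
\]

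\emph{Step 2 (interpolation).} The operator $P=P_{\mu}$ is the $L^{2}(\mu)$-adjoint of the Koopman operator $U_{\mu}$, and $\mu$ is $f$-invariant, so $P$ contracts both $L^{1}(\mu)$ and $L^{\infty}(\mu)$; in particular $\|P^{j}\varphi\|_{\infty}\le\|\varphi\|_{\infty}$. For any $q\ge 1$ we then have the trivial interpolation
\[
\|P^{j}\varphi\|_{q}^{q}=\int |P^{j}\varphi|^{q-1}|P^{j}\varphi|\,d\mu\le \|P^{j}\varphi\|_{\infty}^{q-1}\,\|P^{j}\varphi\|_{1}\le \|\varphi\|_{\infty}^{q-1}\,\|P^{j}\varphi\|_{1}.
\]

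\emph{Step 3 (conclusion).} Combining Steps 1 and 2 and taking the $q$-th root gives
\[
\|P^{j}\varphi\|_{q}\le \bigl(\mathrm{Cor}_{\mu}(\varphi,\psi\circ f^{j})\,\|\varphi\|_{\B}\bigr)^{1/q}\,\|\varphi\|_{\infty}^{(q-1)/q},
\]
which is the claimed inequality. The argument is short; the only mildly subtle point is ensuring that the duality $\int (P^{j}\varphi)\psi\,d\mu=\int\varphi(\psi\circ f^{j})\,d\mu$ is being used with the Perron--Frobenius operator $P_{\mu}$ relative to the invariant measure $\mu$ (so that no Radon--Nikodym factor appears), and that $\|\psi\circ f^{j}\|_{\infty}\le 1$ for the sign function $\psi$. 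No step should present any real obstacle.
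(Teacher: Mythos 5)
Your proposal is correct and follows essentially the same route as the paper: take $\psi=\mathrm{sgn}(P^{j}\varphi)$, bound $\|P^{j}\varphi\|_{1}$ via the duality $\int (P^{j}\varphi)\psi\,d\mu=\int\varphi(\psi\circ f^{j})\,d\mu$ together with $\int\varphi\,d\mu=0$ to invoke the correlation, and then interpolate $\|P^{j}\varphi\|_{q}^{q}\le\|P^{j}\varphi\|_{\infty}^{q-1}\|P^{j}\varphi\|_{1}$ with $\|P^{j}\varphi\|_{\infty}\le\|\varphi\|_{\infty}$ from (P5). The only slip is cosmetic: the duality identity you need is property (P2), not (P3), in the paper's numbering.
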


\begin{proof}
We start by writing
\begin{align}
\|P^j\varphi\|_q &=\left(\int |P^j\varphi|^q\,d \mu\right)^{1/q}\nonumber
\\
&\leq \left(\|P^j\varphi\|_\infty^{q-1}\int |P^j\varphi|\,d\mu\right)^{1/q}\nonumber \\
&=\left(\|P^j\varphi\|_\infty^{q-1}\|P^j\varphi\|_1\right)^{1/q}. \label{eq:Pjq}
 \end{align}
We use  property (P5) to get
\[
 \|P^j\varphi\|_\infty^{q-1} \leq \|\varphi\|_\infty^{q-1}.
  \]
Then, taking $\psi=\mbox{sgn} (P^j\varphi)$, using
 property~(P2)
and  our assumptions on polynomial decay of correlations we have
\[
\|P^j\varphi\|_1=\int \left|P^j\varphi\right|  d\mu=
\int (P^j\varphi)\psi   d\mu=
\int \varphi (\psi \circ f^{n}) d\mu
= \|\varphi\|_{\B} \|\psi\|_\infty  \cv_{\mu}(\varphi, \psi\circ f^{j})
\]
 Thus, substituting into \eqref{eq:Pjq} and using that \( \|\psi\|_{\infty}=1 \) we get
\[
\|P^j\varphi\|_q = \left(\|P^j\varphi\|_\infty^{q-1}\|P^j\varphi\|_1\right)^{1/q} \leq
C^{1/q} \|\varphi\|_{\B}^{1/q} \|\varphi\|_\infty^{1-1/q}  j^{-\beta/q}.
\]
\end{proof}

\subsection{Polynomial case}\label{subsec:polynomial-DC=>LD}

\begin{proposition}\label{lem:poly}
 Let $\beta, C>0$ be such that  for all $\psi\in L^\infty$ we have
 \[
 \cv_\mu(\varphi,\psi\circ f^n) \leq C n^{-\beta}.
 \]
 Then there exists a constant \( C'>0 \) (depending only on \( C \)) such that  for every \( \epsilon>0 \) and \(  q>\max\{1,\beta\} \) we have
 \[
 \ld_{\mu}(\varphi,
\epsilon,n) \leq  C'\|\varphi\|_{\B}\|\varphi\|_{\infty}^{2q - 1}\epsilon^{-2q}  \,n^{-\beta}.
 \]
\end{proposition}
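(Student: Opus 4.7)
The plan is to decompose $S_n$ using the formula \eqref{Sn1} with free parameter $k$, apply the union bound to the four pieces, control each piece via Markov's inequality at exponent $2q$, and finally choose $k$ as a function of $n$ (the natural choice being $k\sim n$) to balance the contributions. Specifically, writing $M_n^{(k)}=\sum_{j=1}^n Z_j^{(k)}$, the union bound gives
\[
\mu(|S_n|>\epsilon n)\le \mu\bigl(|M_n^{(k)}|>\tfrac{\epsilon n}{4}\bigr)+\mu\bigl(|\chi^{(k)}\circ f^n|>\tfrac{\epsilon n}{4}\bigr)+\mu\bigl(|\chi^{(k)}|>\tfrac{\epsilon n}{4}\bigr)+\mu\Bigl(\Bigl|\sum_{j=1}^n P^k\varphi\circ f^{n-j}\Bigr|>\tfrac{\epsilon n}{4}\Bigr).
\]

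For the last three terms (which do not require the martingale structure), I would apply Markov at exponent $2q$, then use invariance of $\mu$, Minkowski's inequality, and Lemma~\ref{lem:Pjq}. The key input is
\[
\|P^j\varphi\|_{2q}^{2q}\le C\,\|\varphi\|_{\B}\,\|\varphi\|_\infty^{2q-1}\,j^{-\beta},
\]
which handles the Birkhoff sum of $P^k\varphi$ directly, and which, because $q>\beta$ implies $\beta/(2q)<1$, yields the estimate $\|\chi^{(k)}\|_{2q}^{2q}\lesssim \|\varphi\|_{\B}\|\varphi\|_\infty^{2q-1}k^{2q-\beta}$ by summing the powers $j^{-\beta/(2q)}$ up to~$k$. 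With $k=n$, each of these three bounds collapses to $C\|\varphi\|_{\B}\|\varphi\|_\infty^{2q-1}\epsilon^{-2q}n^{-\beta}$, as required.

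For the martingale term, Lemma~\ref{lem:mart} allows the application of Rio's inequality (Theorem~\ref{th:Rio} from Appendix~\ref{ap:A}), which bounds $\|M_n^{(k)}\|_{2q}^{2q}$ by $C_q n^q\|\xi^{(k)}\|_{2q}^{2q}$; the latter is controlled by the same estimates on $\|\varphi\|_{2q}$, $\|\chi^{(k)}\|_{2q}$ and $\|P^k\varphi\|_{2q}$ used above. Markov at exponent $2q$ and the choice $k\sim n$ then feed into the same $n^{-\beta}$ bound (with the same explicit dependence on $\|\varphi\|_{\B}$, $\|\varphi\|_\infty^{2q-1}$ and $\epsilon^{-2q}$), so that collecting the four contributions yields the proposition.

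The delicate point is the balancing of $k$ in the martingale piece: the crude bound $\|\xi^{(k)}\|_{2q}\lesssim k^{1-\beta/(2q)}$ coming from $\chi^{(k)}$ is the largest contribution, and it is precisely the hypothesis $q>\max\{1,\beta\}$ that makes Rio's inequality applicable ($q>1$) and forces the exponent $\beta/(2q)<1$ needed to sum the $L^{2q}$ norms of $P^j\varphi$ over $1\le j\le k$ while still keeping, after dividing by $(\epsilon n/4)^{2q}$, the final rate $n^{-\beta}$. Keeping the explicit form of the constants $\|\varphi\|_{\B}$, $\|\varphi\|_\infty^{2q-1}$, $\epsilon^{-2q}$ at each step is essential, since they will be needed in the subsequent application of this proposition to the construction of the Gibbs-Markov induced maps in later sections.
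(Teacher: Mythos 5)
Your approach diverges from the paper's in a way that creates a genuine gap: you split $S_n$ into four pieces via a union bound and treat the martingale sum $M_n^{(k)}=\sum_j Z_j^{(k)}$ on its own, whereas the paper applies Rio's inequality to $S_n=\sum_j X_j$ directly (keeping the martingale decomposition \emph{inside} Rio, only to kill the cross terms $\E(Z_j^{(k)}|\mathcal F_i)$ for $j>i$). The difference is not cosmetic. In Rio's bound the expression $\|X_i\sum_{j\ge i}\E(X_j|\mathcal F_i)\|_q$ is controlled by $\|X_i\|_\infty\cdot\|\sum_{j\ge i}\E(X_j|\mathcal F_i)\|_q$, and the crucial point is that $\|X_i\|_\infty=\|\varphi\|_\infty$ is \emph{independent of $k$}. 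Once you apply Rio to $M_n^{(k)}$ instead, the martingale structure reduces it to $\|M_n^{(k)}\|_{2q}^2\le 4qn\,\|\xi^{(k)}\|_{2q}^2$, and every occurrence of $\xi^{(k)}$ now carries the size $\|\chi^{(k)}\|_{2q}\sim k^{1-\beta/(2q)}$; there is no bounded factor $\|\varphi\|_\infty$ left to save a power of $k$.

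Concretely, with your own estimates $\|\xi^{(k)}\|_{2q}^{2q}\lesssim \|\varphi\|_{\B}\|\varphi\|_\infty^{2q-1}k^{2q-\beta}$ and $\|M_n^{(k)}\|_{2q}^{2q}\lesssim n^q\|\xi^{(k)}\|_{2q}^{2q}$, taking $k=n$ and applying Markov at exponent $2q$ gives
\[
\mu\bigl(|M_n^{(n)}|>\tfrac{\epsilon n}{4}\bigr)\ \lesssim\ \frac{\|\varphi\|_{\B}\|\varphi\|_\infty^{2q-1}\,n^{3q-\beta}}{\epsilon^{2q}\,n^{2q}}\ =\ \frac{\|\varphi\|_{\B}\|\varphi\|_\infty^{2q-1}}{\epsilon^{2q}}\,n^{q-\beta},
\]
which diverges since $q>\beta$. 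Trying to rebalance by choosing $k$ smaller does not help: the martingale term is $\lesssim n^{-q}k^{2q-\beta}$ while your Birkhoff-sum term $\sum_j P^k\varphi\circ f^{n-j}$ is $\lesssim k^{-\beta}$; equating them forces $k=\sqrt n$, which yields only $n^{-\beta/2}$ for both. So the union-bound scheme cannot reach the rate $n^{-\beta}$. The fix is exactly what the paper does in \eqref{presum}--\eqref{thirdterm}: apply Rio to the original $X_i=\varphi\circ f^{n-i}$, pull out the uniform bound $\|X_i\|_\infty=\|\varphi\|_\infty$ in \eqref{presum}, and only \emph{then} decompose $\sum_{j\ge i}\E(X_j|\mathcal F_i)$ into martingale, boundary, and $P^k\varphi$ pieces, which are each $O(n^{1-\beta/q})$ in $L^q$ when $k=n$. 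That gives $\|S_n\|_{2q}^{2q}\lesssim n^{2q-\beta}$ and hence $n^{-\beta}$ after Markov. (Your treatment of the non-martingale terms is fine; the gap is only in the martingale piece.)
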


\begin{proof}
In order to apply Rio's inequality, we observe that for all $1\le i\le u\le n$
\begin{equation}\label{presum}
 \left\|X_{i}\sum_{j=i}^u \E(X_j|\mathcal F_i)\right\|_{q}
\leq \|X_{i}\|_{\infty} \left\|\sum_{j=i}^u \E(X_j|\mathcal F_i)\right\|_{q}
\leq \|\varphi \|_{\infty} \left\|\sum_{j=i}^u \E(X_j|\mathcal F_i)\right\|_{q}
 \end{equation}
and then,
using the standard fact that the conditional expectation of a sum of random variables is equal to the sum of the conditional expectations,  \( \|\sum_{j=i}^u \E(X_j|\mathcal F_i)\|_{q} \) is equal to
\begin{equation}\label{sum}
\left\|\sum_{j=i}^u \E(Z_j^{(k)}|\mathcal F_i)+\E(\chi^{(k)}\circ f^{n-i+1}-\chi^{(k)}\circ f^{n-u}|\mathcal F_i)+ \sum_{j=i}^u \E(P^k\varphi\circ f^{n-j}|\mathcal F_i)\right\|_{q}
\end{equation}
By the triangle inequality it is of course sufficient to bound the \( L^{q} \) norm of each term in the sum \eqref{sum}.
First we observe that from Lemma \ref{lem:Pjq} and using the assumption of polynomial decay of correlations we have
\begin{equation}\label{Pjnorm}
\|P^j\varphi\|_q \leq
\cv_{\mu}(\varphi, \psi\circ f^{j})\|\varphi\|_{\B}^{1/q} \|\varphi\|_\infty^{1-1/q}
\leq C^{1/q} \|\varphi\|_{\B}^{1/q}
\|\varphi\|_\infty^{1-1/q} j^{-\beta/q}.
\end{equation}
Therefore, recalling that  \( q>\beta \) and summing over \( j \), we get
\begin{equation}\label{chinorm}
\|\chi^{(k)}\|_{q}
= \left\|\sum_{j=1}^k P^j\varphi\right\|_{q}  \leq
 C^{1/q} \|\varphi\|_{\B}^{1/q}
\|\varphi\|_\infty^{1-1/q} \frac{  k^{1-\beta/q}}{1-\beta/q}
\end{equation}
Using \eqref{Pjnorm} and \eqref{chinorm} we can now obtain bounds for the three terms in \eqref{sum}. To simplify the notation we let \( \tilde C_{\varphi} = C^{1/q} \|\varphi\|_{\B}^{1/q} \|\varphi\|_\infty^{1-1/q} \). Recall that  for any \( q\geq 1 \), any random variable \( X \in L^{q} \) measurable with respect to a  \( \sigma \)-algebra \( \mathcal F \),
 and any sub-\( \sigma \)-algebra
\( \tilde{\mathcal F} \) of \( \mathcal F \)
we have \( \|\E(X|\tilde{\mathcal F})\|_{q} \leq  \|X\|_{q} \).
For the first term we showed in Lemma \ref{lem:mart} that the sequence \( \{Z^{(k)}_{j}\}_{j=1}^n \) is a martingale difference and  thus, in particular, for all $j=1,\ldots,n$,
we have $\E(Z_j^{(k)}|\mathcal F_{j-1})=0$ and therefore
\begin{equation}\label{firstterm}
\begin{aligned}
\left\|\sum_{j=i}^u \E(Z_j^{(k)}|\mathcal F_i)\right\|_{q}
&= \|\E(Z_i^{(k)}|\mathcal F_i)\|_{q}
\leq \|Z_{i}^{(k)}\|_{q}
= \|\xi^{(k)}\|_{q}
\\ &\leq  \|\varphi\|_{q} + 2\|\chi^{(k)}\|_{q}+ \|P^{k}\varphi\|_{q}
\\ &\leq \|\varphi\|_{q} +
\tilde C_{\varphi}
\left(\frac{2 k^{1-\beta/q}}{1-\beta/q}
+  k^{-\beta/q}\right).
\end{aligned}
\end{equation}
For the second   term we have
\begin{equation}\label{secondterm}
\begin{aligned}
 \|\E(\chi^{(k)}\circ f^{n-i+1}-\chi^{(k)}\circ f^{n-u}|\mathcal F_i)\|_{q}
&\leq \|\chi^{(k)}\circ f^{n-i+1}-\chi^{(k)}\circ f^{n-u}\|_{q}
\\ &\leq 2\|\chi^{(k)}\|_{q}\\
&\leq
\frac{2 \tilde C_{\varphi} k^{1-\beta/q}}{1-\beta/q}.
\end{aligned}
\end{equation}
Finally, for the third term, using that \( u\leq n \), we  have
\begin{equation}\label{thirdterm}
\begin{aligned}
\left\|\sum_{j=i}^u \E(P^k\varphi\circ f^{n-j}|\mathcal F_i)\right\|_{q}
&\leq
\sum_{j=i}^u \| \E(P^k\varphi\circ f^{n-j}|\mathcal F_i)\|_{q}
\\ &
\leq \sum_{j=i}^u \|P^k\varphi\|_{q}
 \leq n \|P^{k}\varphi\|_{q}
\leq  \tilde C_{\varphi} n k^{-\beta/q}.
 \end{aligned}
 \end{equation}
Substituting \eqref{firstterm}, \eqref{secondterm}, and \eqref{thirdterm} into
\eqref{sum} and then into \eqref{presum} we get
\[
\left\|X_{i}\sum_{j=i}^u \E(X_j|\mathcal F_i)\right\|_{q} \leq
\|\varphi\|_{\infty} \left(\|\varphi\|_{q} +
\tilde C_{\varphi}
\left(\frac{4 k^{1-\beta/q}}{1-\beta/q}
+  (n+1) k^{-\beta/q}\right)\right)
 \]
Taking \( k=n \), using that
\( \tilde C_{\varphi} = C^{1/q} \|\varphi\|_{\B}^{1/q} \|\varphi\|_\infty^{1-1/q} \) and applying Rio's inequality we get
\[
\|S_{n}\|_{2q}^{2q} \leq
\left(4q n \|\varphi\|_{\infty} \left(\|\varphi\|_{q} +
C^{1/q} \|\varphi\|_{\B}^{1/q} \|\varphi\|_\infty^{1-1/q}
\left(\frac{4 n^{1-\beta/q}}{1-\beta/q}
+  (n+1) n^{-\beta/q}\right)\right)\right)^{q}.
 \]
 To simplify this expression we use the notation \( \lesssim \) to indicate that the quantity on the left is less than the quantity on the right for sufficiently large \( n \) and up to some constant that may depend on \( \beta \) and \( q \) but not on \( \varphi \) (though the meaning of ``sufficiently large \( n \)'' is allowed to depend on \( \varphi \)). Then, for the innermost parenthesis,  taking into account that \( q>\beta \), we have
 \( \frac{4 n^{1-\beta/q}}{1-\beta/q}
+  (n+1) n^{-\beta/q} \lesssim n^{1-\beta/q} \).
The quantity \( n^{1-\beta/q} \)  is increasing with \( n \) and therefore the contribution of \( \|\varphi\|_{q} \) is negligible for sufficiently large \( n \) and so the middle parenthesis is \( \lesssim \|\varphi\|_{\B}^{1/q}\|\varphi\|_\infty^{1-1/q}n^{1-\beta/q} \) and  the outer parenthesis is
\( \lesssim   \|\varphi\|_{\B}^{1/q}\|\varphi\|_\infty^{2-1/q}n^{2-\beta/q}\). Therefore
\[
\|S_{n}\|_{2q}^{2q} \lesssim \|\varphi\|_{\B}\|\varphi\|_\infty^{2q-1}n^{2q-\beta}
 \]
Finally we  apply
the Markov-Chebyshev inequality
to get
\[
\mu\left(\frac1n\left|S_{n}\right|\geq \epsilon \right) \leq
\frac{\|S_n\|_{2q}^{2q} }{\epsilon^{2q}\,n^{2q}} \lesssim
\frac{\|\varphi\|_{\B}\|\varphi\|_\infty^{2q-1}n^{2q-\beta}}
{\epsilon^{2q}\,n^{2q}}=
\frac{ \|\varphi\|_{\B}\|\varphi\|_\infty^{2q-1}n^{-\beta}}
{\epsilon^{2q} }
\]
and this completes the proof in the polynomial case.
\end{proof}

\subsection{Stretched exponential case}\label{subsec:subexponential-DC=>LD}

\begin{proposition}\label{lem:strexp}
Let
$C,\tau,\theta>0$ be such that for all
$\psi\in L^\infty$ we have
 \[
 \cv_\mu(\varphi,\psi\circ f^n)\leq C e^{-\tau n^\theta}.
 \]
Then, for every \( \epsilon>0 \) and \(
 \tau'=\min\{\tau, {\epsilon^{2}}/{(162\|\varphi\|^2_{\infty})}\}
 \) we have
 \[
 \ld_{\mu}(\varphi,\epsilon,n)
 \leq \left(2+\frac {C \|\varphi\|_{\B}}{\epsilon}\right)
 e^{-\tau' n^{{\theta}/{(\theta+2)}}}.
 \]
\end{proposition}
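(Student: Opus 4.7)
The plan is to run the same martingale decomposition used in the polynomial case (Proposition \ref{lem:poly}), but replace the Rio inequality by the sharper Azuma--Hoeffding inequality (Theorem \ref{th:Azuma-Hoeffding}), which is the tool adapted to exponential-type tails. Starting from identity \eqref{Sn1},
\[
S_n \;=\; \sum_{j=1}^n Z^{(k)}_j \;+\; \bigl(\chi^{(k)}\circ f^n-\chi^{(k)}\bigr) \;+\; \sum_{j=1}^n (P^k\varphi)\circ f^{n-j},
\]
I will split $\{|S_n|>\epsilon n\}$ into three events, one per term, each with threshold $\epsilon n/3$, and then choose the truncation parameter $k=k(n)$ a posteriori to balance the resulting bounds.

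For the martingale term I will use property (P5) and the trivial bound $\|P^j\varphi\|_\infty\le\|\varphi\|_\infty$ to estimate $\|\chi^{(k)}\|_\infty\le k\|\varphi\|_\infty$, which yields $\|\xi^{(k)}\|_\infty\le 3k\|\varphi\|_\infty$ and hence the pointwise bound $|Z^{(k)}_j|\le 3k\|\varphi\|_\infty$. Applying Azuma--Hoeffding to the martingale difference sequence from Lemma \ref{lem:mart} gives
\[
\mu\Bigl(\Bigl|\sum_{j=1}^n Z^{(k)}_j\Bigr|>\tfrac{\epsilon n}{3}\Bigr)\;\le\; 2\exp\Bigl(-\frac{\epsilon^{2}\,n}{162\,k^{2}\|\varphi\|_\infty^{2}}\Bigr).
\]
For the coboundary term, the same $L^\infty$ bound gives $|\chi^{(k)}\circ f^n-\chi^{(k)}|\le 2k\|\varphi\|_\infty$ pointwise, so the event $\{|\chi^{(k)}\circ f^n-\chi^{(k)}|>\epsilon n/3\}$ is empty as soon as $k\le \epsilon n/(6\|\varphi\|_\infty)$. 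For the error term, Markov's inequality combined with Lemma \ref{lem:Pjq} at $q=1$ (which in the stretched exponential regime becomes $\|P^k\varphi\|_1\le C\|\varphi\|_{\B}\,e^{-\tau k^{\theta}}$) gives
\[
\mu\Bigl(\Bigl|\sum_{j=1}^n (P^k\varphi)\circ f^{n-j}\Bigr|>\tfrac{\epsilon n}{3}\Bigr) \;\le\; \frac{3\|P^k\varphi\|_1}{\epsilon}\;\le\; \frac{3C\|\varphi\|_{\B}}{\epsilon}\,e^{-\tau k^\theta}.
\]

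The final step is to balance the Azuma--Hoeffding exponent $\epsilon^2 n/(162 k^2\|\varphi\|_\infty^2)$ against $\tau k^\theta$ by setting $k=\lfloor n^{1/(\theta+2)}\rfloor$. Then both exponents become proportional to $n^{\theta/(\theta+2)}$ and the constraint $k\le \epsilon n/(6\|\varphi\|_\infty)$ is automatic once $n$ exceeds a threshold depending on $\varphi$ and $\epsilon$. Combining the three estimates and taking the smaller of the two exponent constants produces the stated bound with $\tau'=\min\{\tau,\epsilon^2/(162\|\varphi\|_\infty^2)\}$ and prefactor of the form $(2+\text{const}\cdot C\|\varphi\|_{\B}/\epsilon)$.

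The conceptual core (martingale + Azuma--Hoeffding + optimal truncation) is standard; the main obstacle I expect is bookkeeping rather than ideas, namely absorbing the factor from the three-way threshold splitting into the stated prefactor $(2+C\|\varphi\|_{\B}/\epsilon)$, and ensuring the bound holds for all $n\ge 1$ (not merely large $n$) by handling the short-$n$ range where either $k<2$ or the coboundary constraint fails, which only costs a trivial enlargement of the constants.
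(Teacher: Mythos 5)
Your proposal is correct and follows essentially the same route as the paper: the three-way split of \eqref{Sn1} at threshold $\epsilon/3$, the $L^\infty$ bound $\|Z^{(k)}_j\|_\infty\le 3k\|\varphi\|_\infty$ feeding into Azuma--Hoeffding, the Markov/Lemma~\ref{lem:Pjq} estimate for the $P^k\varphi$ tail, and the choice $k\sim n^{1/(\theta+2)}$ making the coboundary term vanish for large $n$. The only delta is bookkeeping of constants, and your remark about handling small $n$ is a fair observation since the paper itself only argues the coboundary term vanishes for $n$ large enough.
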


\begin{proof}From \eqref{Sn1} we can bound  \( \mu\left(n^{-1}|S_n|>\epsilon\right) \) by
\begin{equation}\label{eq:subexp-large-dev}
 \mu\left(\frac{1}{n}\left|\sum_{j=1}^n  Z_j^{(k)}\right|> \frac{\epsilon}{3}\right)
+
 \mu\left(\frac{1}{n} |\chi^{(k)}\circ  f^n- \chi^{(k)}| >\frac{\epsilon}{3}\right)
 +\mu\left(\frac{1}{n}\left|\sum_{j=1}^n P^k\varphi\circ f^{n-j}\right|>\frac{\epsilon}{3}\right).
 \end{equation}
We shall estimate each of the three terms in \eqref{eq:subexp-large-dev} separately and by distinct arguments. We start with a preliminary remark which will be used for both the first and the second terms.
Since $P$ is defined with respect to the invariant measure $\mu$,  by property
(P5) we have that $\|P\varphi\|_\infty\leq \|\varphi\|_\infty$
and therefore we get
\(  \|\chi^{(k)}\|_\infty\leq k\|\varphi\|_\infty \)
which immediately implies
\begin{equation} \label{eq:chi-infty-norm}
\| \chi^{(k)}\circ f^n-\chi^{(k)}\|_{\infty} \leq 2k \|\varphi\|_{\infty}.
\end{equation}
From the  definition  of \( Z^{(k)}_{j} \) and
using \eqref{eq:chi-infty-norm}, we have for $k>2$ \begin{equation}\label{eq:z-infty-norm}
\|Z^{(k)}_j\|_\infty
\leq \|\varphi\|_\infty+2k\|\varphi\|_\infty+\|\varphi\|_\infty\leq 2(k+1)\|\varphi\|_\infty\leq 3k\|\varphi\|_\infty.
\end{equation}
By Lemma \ref{lem:mart} we know that the \( Z^{(k)}_{j}  \) form a sequence of martingale differences. Then,
letting \( b= \epsilon/3 \) and \( a = 3k\|\varphi\|_\infty \) and applying the Azuma-Hoeffding inequality  thus gives
\begin{equation}\label{lem:LDformartingales}
   \mu\left(\frac{1}{n}\left|\sum_{j=1}^n Z_j^{(k)}\right|>\frac{\epsilon}{3}\right)
   \leq 2\exp\left\{-\frac{n \epsilon^2}{162 k^2 \|\varphi\|^2_\infty}\right\}.
\end{equation}
%O 2 na fórmula anterior vem do facto da desigualdade de Azume vir sem valor absoluto e portanto temos que considerar o caso em que $S_n>nx$ e ainda o caso em que $-S_n>nx$. Lesigne e Volny fizeram o mesmo na fórmula (7) do paper deles.
To estimate the third term in \eqref{eq:subexp-large-dev} we
use  Chebyshev-Markov's inequality and
the invariance of $\mu$ to get
\begin{align}
\nonumber
 \mu\left(\frac{1}{n}\left|\sum_{j=1}^nP^k \varphi\circ f^{n-j}\right|>\frac{\epsilon}{3}\right)
 \nonumber
 &\leq \frac{3}{\epsilon n}\int\left|\sum_{j=1}^nP^k\varphi\circ f^{n-j}\right|d\mu\\
 \nonumber
 &\leq \frac{3}{\epsilon n}\sum_{j=1}^n\int\left|P^k\varphi\circ f^{n-j}\right|d\mu\\
 \nonumber
 &\leq \frac{3}{\epsilon}\int\left|P^k\varphi\right|d\mu
 \\
 & \leq \frac{3}{\epsilon}C \|\varphi\|_{\B}\e^{-\tau k^\theta}.
 \label{LDformartingales}
\end{align}
For the last inequality we have used a  simple application of Lemma \ref{lem:Pjq} with \( q=1 \) and our assumptions on the stretched exponential decay of correlations.
Notice that the estimates obtained in \eqref{lem:LDformartingales} and
\eqref{LDformartingales}  involve  \( k \).
At this point we set $$k=k(n):=n^{1/(\theta+2)}.$$ Then, for all sufficiently large \( n \),  we have from \eqref{eq:chi-infty-norm} that the condition in the second term of \eqref{eq:subexp-large-dev}
is never satisfied and so the term vanishes. Therefore substituting
\eqref{lem:LDformartingales} and
\eqref{LDformartingales} and the formula for \( k(n) \)
into \eqref{eq:subexp-large-dev}
we get
\[
\mu\left(\frac{1}{n}|S_n|>\epsilon\right)
\leq 2\exp\left\{-\frac{\epsilon^2}{168\|\varphi\|^2_\infty}\; n^{\frac\theta{\theta+2}}\right\}+ \frac{C\|\varphi\|_{\B}}{\epsilon}\;\exp\left\{-\tau n^{\frac\theta{\theta+2}}\right\}.
\]
This completes the proof of Proposition \ref{lem:strexp}.
\end{proof}

\subsection{Exponential case}\label{subsec:exponential-DC=>LD}
To deal with the exponential case, we start with a preliminary result which essentially uses the duality of $L^1$ and $L^\infty$.

\begin{lemma}\label{lem:sum}
Let $\varphi\in L^\infty$ with \( \int \varphi d\mu =0 \). If there is $\xi(n)$ with $\sum_{n=0}^\infty\xi(n)<\infty$ and
$
  \cv_\mu(\varphi,\psi\circ f^n) \leq\xi(n)
$ for all $\psi\in L^1$, then
 $$
  \sum_{n=0}^{\infty}P^{n}\varphi\in L^{\infty}.
 $$
\end{lemma}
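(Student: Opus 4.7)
The plan is to reduce the claim to an $L^\infty$ bound on each term $P^n\varphi$, using $L^1$--$L^\infty$ duality on the finite measure space $(M,\mu)$, and then sum. The key input is the adjoint relation $\int (P^n\varphi)\,\psi\,d\mu=\int \varphi\,(\psi\circ f^n)\,d\mu$ from property (P2), which translates the statement about $P^n\varphi$ into the statement about correlations that is already in the hypothesis.

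First, fix $\psi\in L^1$ with $\|\psi\|_{L^1}\le 1$. By (P2),
\[
\int (P^n\varphi)\,\psi\,d\mu \;=\; \int \varphi\,(\psi\circ f^n)\,d\mu.
\]
Since $\int\varphi\,d\mu=0$, the right-hand side equals exactly the expression appearing inside the absolute value of $\cv_\mu(\varphi,\psi\circ f^n)$, before normalization. The decay-of-correlations hypothesis therefore yields
\[
\left|\int (P^n\varphi)\,\psi\,d\mu\right| \;\lesssim\; \xi(n)\,\|\psi\|_{L^1},
\]
where the implicit constant absorbs $\|\varphi\|_{\B}$ (the Banach-space norm used to normalize $\cv_\mu$; here $\varphi$ is fixed so this is a constant).

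Next, since $(M,\mu)$ is a probability space, $L^\infty(\mu)$ is the dual of $L^1(\mu)$, and the $L^\infty$-norm may be computed as
\[
\|P^n\varphi\|_{L^\infty}=\sup\left\{\left|\int (P^n\varphi)\,\psi\,d\mu\right|:\psi\in L^1,\ \|\psi\|_{L^1}\le 1\right\}.
\]
Taking the supremum over such $\psi$ in the previous step gives $\|P^n\varphi\|_{L^\infty}\lesssim \xi(n)$. Finally, since $\sum_{n\ge 0}\xi(n)<\infty$ and $L^\infty$ is complete, the partial sums $\sum_{n=0}^{N}P^n\varphi$ form a Cauchy sequence in $L^\infty$, so the series converges absolutely to an element of $L^\infty$, as required.

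There is no serious obstacle — the proof is essentially a one-line duality argument together with the adjoint property of $P$. The only point to be attentive to is that the correlation hypothesis is used against the full class $L^1$ (not merely $L^\infty$ or H\"older test functions), which is exactly what makes the duality step work and distinguishes this lemma from the polynomial and stretched exponential settings where the weaker $L^\infty$-against-$\B$ hypothesis cannot be dualized to control $\|P^n\varphi\|_{L^\infty}$ directly.
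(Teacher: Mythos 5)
Your proof is correct and follows essentially the same route as the paper: identify $L^\infty$ with the dual of $L^1$, use the adjoint relation (P2) to convert the bound on $\|P^n\varphi\|_{L^\infty}$ into a correlation bound against $L^1$ test functions (noting that $\int\varphi\,d\mu=0$ and $\|\psi\circ f^n\|_{L^1}=\|\psi\|_{L^1}$ by invariance), then sum the resulting geometric-type bound. The paper keeps the constant $\|\varphi\|_\infty$ explicit rather than absorbing it into $\lesssim$, but the argument is identical.
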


\begin{proof}

By Riesz' representation theorem we may identify $L^\infty$ with the dual of $L^1$ by associating to $\varphi\in L^\infty$ the linear functional $\ell_\varphi:L^1\to \R$ defined by $\ell_\varphi(\psi)=\int \varphi \psi d\mu$. Since $\|\varphi\|_\infty=\|\ell_\varphi\|$, we have for all $n\ge0$
\begin{align*}
   \|P^{n}\varphi\|_{L^{\infty}}
 &  =\sup_{\psi\in L^1} \frac{|\int (P^{n}\varphi )\psi d\mu|}{\|\psi\|_{L^1}}
 \\ &=\sup_{\psi\in L^1} \frac{|\int \varphi (\psi\circ f^n )d\mu|}{\|\psi\|_{L^1}}
\\ &= \frac{\|\varphi\|_{\infty}\|\psi\|_{L^1} \cv_\mu(\varphi,\psi\circ f^n) }{\|\psi\|_{L^1}}
\\ &\le\|\varphi\|_{\infty} \xi(n).
\end{align*}
Therefore
\[
\left\|\sum_{n=0}^{\infty}P^{n}\varphi \right\|_{\infty} \leq \sum_{n=0}^{\infty} \|P^{n}\varphi\|_{\infty}
\leq \|\varphi\|_{\infty}\sum_{n=0}^{\infty} \xi(n) <\infty.
\]
\end{proof}

\begin{proposition}
Let \( \varphi \in L^{\infty} \) and suppose that
\[
 \sum_{n=0}^{\infty} P^{n}\varphi \in L^{\infty}.
  \]
  Then  for every \( \epsilon > 0 \) there exists  \( C'=C'({\varphi, \epsilon})> 0 \) such that
 \[
 \ld_{\mu}(\varphi,\epsilon,n) \leq C' e^{- \tau n},
 \]
 where \(\tau=  1/8(\|\varphi\|_{\infty}+2\|\sum P^{n}\varphi\|_{\infty})^{2} \).
\end{proposition}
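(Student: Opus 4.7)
The strategy is to exploit the decomposition \eqref{Snexp} directly, without any truncation parameter, since by hypothesis $\chi := \sum_{n=1}^\infty P^n \varphi \in L^\infty$ is well-defined and bounded. Unlike the polynomial and stretched exponential cases, where only $L^q$-type bounds on $\chi^{(k)}$ were available (forcing the introduction of the truncation at level $k$ and an error term of the form $\sum_j (P^k\varphi)\circ f^{n-j}$), here one can work with the full infinite series. This eliminates the error term entirely and produces martingale differences that are genuinely bounded in $L^\infty$, which is exactly what Azuma--Hoeffding requires and what ultimately gives the true exponential rate rather than a fractional power of $n$.

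The first step is to verify that $\{Z_j\}_{j=1}^n$ defined by \eqref{eq:Zj} is a martingale difference sequence with respect to the filtration $\{\mathcal F_j\}$ from \eqref{eq:def-filtration-special}. The argument copies Lemma~\ref{lem:mart} essentially verbatim: $\E(Z_1)=0$ follows from invariance, and for the remaining conditional expectations one reduces to showing $P\xi=0$. Since $P\chi=\sum_{i=1}^\infty P^{i+1}\varphi=\chi-P\varphi$, property (P3) gives
\begin{align*}
P\xi = P\varphi + P\chi - P U \chi = P\varphi + (\chi-P\varphi) - \chi = 0,
\end{align*}
and then the chain $\E(Z_{n-i}|\mathcal F_{n-i-1})=U^{i+1}P\xi=0$ from \eqref{mart} applies without change.

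The second step is to split the deviation probability using \eqref{Snexp}:
\begin{align*}
\mu\left(\frac1n|S_n|>\epsilon\right) \leq \mu\left(\frac1n\left|\sum_{j=1}^n Z_j\right|>\frac{\epsilon}{2}\right) + \mu\left(\frac{|\chi\circ f^n - \chi|}{n}>\frac{\epsilon}{2}\right).
\end{align*}
For the boundary term, the crude bound $\|\chi\circ f^n-\chi\|_\infty \leq 2\|\chi\|_\infty$ shows that the second probability is identically zero once $n>4\|\chi\|_\infty/\epsilon$; the finitely many remaining values of $n$ can be absorbed into the constant $C'$. For the martingale sum, from $\xi=\varphi+\chi-\chi\circ f$ one gets the uniform bound
\begin{align*}
\|Z_j\|_\infty \leq \|\varphi\|_\infty + 2\|\chi\|_\infty,
\end{align*}
and a direct application of the Azuma--Hoeffding inequality (Theorem~\ref{th:Azuma-Hoeffding}) with $b=\epsilon/2$ yields
\begin{align*}
\mu\left(\left|\sum_{j=1}^n Z_j\right|>\frac{n\epsilon}{2}\right) \leq 2\exp\left\{-\frac{n\epsilon^2}{8(\|\varphi\|_\infty+2\|\chi\|_\infty)^2}\right\},
\end{align*}
which is precisely the claimed exponential rate.

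I do not expect any serious obstacle: the delicate point — ensuring the martingale differences are truly in $L^\infty$ rather than merely in some $L^q$ — is exactly what the hypothesis $\sum P^n\varphi\in L^\infty$ delivers, and this is also what removes the truncation error that in the previous cases degraded the exponential rate to a stretched exponential one. The rest is a direct one-shot application of Azuma--Hoeffding, and the constant $C'$ simply packages the (trivial) contribution from the small-$n$ regime and the factor of $2$ from the inequality.
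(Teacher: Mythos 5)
Your proof is correct and follows essentially the same route as the paper's: verify $P\xi=0$ to get the martingale difference property, bound $\|Z_j\|_\infty$ by $\|\varphi\|_\infty+2\|\chi\|_\infty$, kill the boundary term $\chi\circ f^n-\chi$ for large $n$, and apply Azuma--Hoeffding. The only cosmetic difference is that you use a union bound to split $S_n$ into the martingale and boundary parts, whereas the paper absorbs the (deterministic, uniformly bounded) boundary term before applying the inequality; both give the same rate and constants.
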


\begin{proof}

We show that
  $\{Z_j\}_{j=1}^n$ as defined in \eqref{eq:Zj} is a finite sequence of martingale differences with respect to the filtration
 $\{\mathcal F_j\}_{j=1}^n$, where $\mathcal F_j= f^{-(n-j)}\mathcal M$, as in \eqref{eq:def-filtration-special}.
Indeed, as before, we also have that $Z_j$ is measurable with respect to $\mathcal F_j$ for all $j=1,\ldots,n$ and
\begin{equation*}
\E(Z_1)=\int \varphi \circ f^{n-1}d\mu + \int  \chi\circ f^{n-1}d\mu-\int \chi \circ f^{n}d\mu=\int \varphi d\mu + \int  \chi d\mu-\int \chi d\mu=0.
\end{equation*}
Furthermore
\[ P\xi=P\varphi+P\chi-PU\chi =P\varphi+(P\chi-\chi) =P\varphi-P\varphi=0,
\]
which allows us to conclude that for all $i=0,\ldots, n-2$,
\begin{equation*}
\begin{aligned}
\E(Z_{n-i}|\mathcal F_{n-i-1}) &=
\E(\xi\circ f^i|f^{-(i+1)}(\mathcal M))
=U^{i+1}P^{i+1}U^i\xi
 %=U^{i+1}P(P^{i}U^i)(\xi)\\
% &=
=U^{i+1}P\xi=0,
\end{aligned}
\end{equation*}
where we used property (P3) and the fact that property (P4) implies that
 $$\E(\cdot|f^{-(i+1)}(\mathcal M))=U^{i+1}P^{i+1}.$$
Additionally, from the definition of \( Z_{j} \)
we have, for all $j=1,\ldots,n$,
\begin{equation}\label{eq:z-infty-norm-2}\|Z_j\|_\infty\leq \|\varphi\|_\infty+2\|\chi\|_\infty.
\end{equation}
and therefore, by the Azuma-Hoeffding inequality we get
\[
\mu\left(\frac{1}{n}\left|\sum_{j=1}^n Z_j\right|>\frac{\epsilon}{2}\right)
\leq 2\exp\left\{-\frac{\epsilon^2}{8(\|\varphi\|_\infty+2\|\chi\|_\infty)^2 }\;n\right\}.
\]
Thus, for all sufficiently large values of \( n \), in particular for \( n\geq N \)
where $2/N\|\chi\|_\infty\leq\epsilon/2$ we have

\begin{align*}
\mu\left(\frac{1}{n}|S_n| >\epsilon\right) &\leq
 \mu\left(\frac{1}{n}\left|\sum_{j=1}^n Z_j\right|+\frac{2}{n}\|\chi\|_\infty>\epsilon\right) \\ &\leq
   \mu\left(\frac{1}{n}\left|\sum_{j=1}^n Z_j\right|>\frac{\epsilon}{2}\right)
   \leq 2\exp\left\{-\frac{\epsilon^2}{8(\|\varphi\|_\infty+2\|\chi\|_\infty)^2 }\;n\right\}
\end{align*}
\end{proof}

\section{Gibbs-Markov structures for local diffeomorphisms}
\label{sec:locdif}

In this section we prove Theorem \ref{th:corr=>tower}.
We consider the function
\[ \phi (x) := \log \|Df(x)^{-1}\|
\]
and note that
 in the case of \( C^{1+} \) local diffeomorphisms, \( \phi  \) is H\"older continuous.
 From the assumptions of Theorem \ref{th:corr=>tower} and the conclusions of Theorem
 \ref{th:DC=>LD} we therefore have
 \begin{equation}\label{eg:ld}
 LD_{\mu}(\phi, \epsilon, n)= \mathcal O(n^{-\beta})
 \quad
 \text{ and }
 \quad
  LD_{\mu}(\phi, \epsilon, n)= \mathcal O(e^{-\tau n^{\theta}})
\end{equation}
in the polynomial case and in
 the stretched  and exponential cases respectively.
 Theorem~\ref{th:corr=>tower} then follows directly from

\begin{theorem}\label{th:LD=>tower}
Let \( f \) be a \(
C^{1+} \)  local diffeomorphism with an ergodic expanding acip \( \mu \);
\begin{enumerate}
\item
if there exists \( \beta > 1\) such that for  small $\epsilon>0$ we have
$\ld_{\mu}(\phi,\epsilon,n)\lesssim n^{-\beta}$, then there is a Gibbs-Markov induced map with
$\m(\mathscr R_{n}) \lesssim n^{-\beta+1}$.
\end{enumerate}
Suppose moreover that  \( d\mu/d\l \) is uniformly bounded away from 0 on its support. Then
\begin{enumerate}
\item[(2)] if there exist \( \tau, \theta > 0 \) such that for small
 \( \epsilon > 0  \) we have
$\ld_{\mu}(\phi,\epsilon,n)\lesssim e^{-\tau n^{\theta}}$, then there is a Gibbs-Markov induced map with $\m(\mathscr R_{n}) \lesssim e^{-\tau'n^{\theta}}, $ for some~$\tau'>~0$.
\end{enumerate}
\end{theorem}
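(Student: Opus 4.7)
The plan is to reduce Theorem~\ref{th:LD=>tower} to the Gibbs--Markov construction theorems of \cite{ALP} (polynomial case) and \cite{Gou} (stretched exponential case). Both papers produce an induced map with a prescribed decay of \( m(\mathscr R_n) \) out of a prescribed decay of the so-called \emph{hyperbolic tail}, namely the Lebesgue measure of the set of points which do not yet possess a hyperbolic time by iterate \(n\). Since \(f\) is a local diffeomorphism there is no critical/singular set to worry about, so the only ingredient we need to supply is the hyperbolic tail estimate, which we obtain from the large deviation hypothesis on \(\phi=\log\|Df^{-1}\|\) via the classical Pliss argument.

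First I would normalise: because \(\mu\) is expanding and ergodic we have \(\int\phi\,d\mu=:-c<0\). Choosing \(\epsilon=c/2\), the large deviation hypothesis immediately bounds the \(\mu\)-measure of the set
\[
B_n:=\left\{x:\ \tfrac{1}{n}\sum_{i=0}^{n-1}\phi(f^i x)>-c/2\right\}
\]
by \(n^{-\beta}\) in case (1), respectively by \(e^{-\tau n^\theta}\) in case (2). Next I would apply the classical Pliss lemma to the sequence \((\phi(f^i x)+c/2)_{i=0}^{n-1}\): if \(x\notin B_n\) then at least a definite fraction \(\sigma n\) of the indices \(1\le k\le n\), with \(\sigma=\sigma(c,\|\phi\|_\infty)>0\), are hyperbolic times of uniform rate \(e^{-c'(n-k)}\) for some \(0<c'<c\). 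In particular every \(x\notin B_n\) has at least one hyperbolic time up to iterate \(n\), so the set \(E_n\) of points without any hyperbolic time up to iterate \(n\) satisfies \(E_n\subset B_n\) and therefore \(\mu(E_n)\) inherits the same decay rate as \(\mu(B_n)\).

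The next step is to upgrade this to a Lebesgue estimate on \(E_n\). In the stretched exponential case this is direct from the standing assumption that \(d\mu/d\l\) is bounded away from zero on \(\mathrm{supp}(\mu)\): after restricting to a ball inside \(\mathrm{supp}(\mu)\), a pointwise lower bound on the density converts the \(\mu\)-bound into an \(\l\)-bound at no loss in the exponent. In the polynomial case, where no pointwise lower bound on the density is available, one exploits Pliss more carefully. One fixes a small ball \(\Delta\) in \(\mathrm{supp}(\mu)\) where the density is bounded below (this always holds on some ball by a Lebesgue density argument) and uses that for each \(x\notin B_n\) one of the \(\sigma n\) hyperbolic time balls \(B_{k_j}(x)\) around \(x\) is mapped diffeomorphically, with bounded distortion, onto a ball of definite size covering \(\Delta\). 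A standard backward-volume change-of-variables argument converts the \(\mu\)-bound \(\mu(B_n)\lesssim n^{-\beta}\) into \(\l(E_n)\lesssim n^{-\beta+1}\), the loss of one power coming from summing over the \(\sigma n\) possible positions of the first hyperbolic time inside \([1,n]\). Feeding these hyperbolic tail bounds into \cite{ALP} (polynomial) and \cite{Gou} (stretched exponential) then yields the Gibbs--Markov induced map with the advertised tail behaviour for \(R\).

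The main obstacle is precisely the \(\mu\to \l\) transfer in case~(1): the remaining steps (the application of the LD hypothesis, the Pliss dichotomy, and the black-box invocation of \cite{ALP,Gou}) are essentially bookkeeping, whereas converting a \(\mu\)-estimate on a ``bad Birkhoff average'' set into a Lebesgue estimate on a ``no hyperbolic time'' set is what produces the exponent shift \(\beta\mapsto\beta-1\) and the reason why case~(1) and case~(2) need slightly different arguments.
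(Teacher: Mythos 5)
Your global architecture (large deviation control on $\phi=\log\|Df^{-1}\|$ $\Rightarrow$ expansion/hyperbolic-tail decay $\Rightarrow$ black-box from \cite{ALP,Gou} $\Rightarrow$ Gibbs--Markov map with controlled tail) is the paper's architecture, and the split \cite{ALP}/local in the polynomial case versus \cite{Gou}/global in the stretched exponential case is also the paper's split. The difference is the intermediate quantity and, crucially, your account of the exponent shift $\beta\mapsto\beta-1$, which does not hold up.

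The paper does not invoke Pliss. It works with the \emph{expansion time}
$\mathcal E(x) := \min\{N : \tfrac1n\sum_{i<n}\phi(f^ix)\le\lambda/2 \ \forall n\ge N\}$,
where $\lambda=\int\phi\,d\mu<0$, and observes that $\{\mathcal E>n\}\subseteq\{N_\epsilon>n\}\subseteq\bigcup_{\ell\ge n}\{\tilde S_\ell>\epsilon\}$. Lemma~\ref{lem:ldtail} then gives, on a set $A$ where $d\mu/dm\ge c$,
\[
m(\{\mathcal E>n\}\cap A)\ \le\ \frac1c\sum_{\ell\ge n}\xi(\ell),
\]
and it is this union bound over $\ell\ge n$, not anything else, that produces $n^{-\beta}\mapsto n^{-\beta+1}$; the same summation in the stretched exponential case costs only a change in $\tau$ and leaves $\theta$ untouched. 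The $\mu$-to-$m$ transfer itself is free up to the constant $1/c$ on the ball $\Delta$ (whose existence with bounded-below density comes from \cite{AlvDiaLuz} in part~(1), or from the standing hypothesis in part~(2)).

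Your Pliss argument gives $E_n\subseteq B_n$, hence $\mu(E_n)\le\xi(n)$ with \emph{no} shift, and you then try to manufacture the shift from a backward-volume change-of-variables in the $\mu\to m$ step. That cannot work: once the density is bounded below on a ball, the transfer costs only a constant. The real reason a power is lost is that the hypothesis \cite{ALP,Gou} actually need is a tail bound on the expansion-time set $\{\mathcal E>n\}$ (equivalently: the $\mu$-measure of points whose Birkhoff averages are \emph{not yet} uniformly good from time $n$ onward), and this is a ``sup over the future'' set, strictly larger than $B_n$ or $E_n$. Bounding it requires summing the single-time large-deviation estimate over $\ell\ge n$, which is where the lost power appears. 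As written, your $E_n\subseteq B_n$ step does not supply the tail estimate that \cite{ALP,Gou} require, so this is a gap, not merely a different route.
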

Notice that the second part of the theorem applies in particular if \( \theta=1 \), i.e. in the exponential case. Notice also that the large deviation rates are not assumed to be uniform in \( \epsilon \).
To prove this theorem we first state a general result  which will also be useful in the case of maps with critical/singular sets.
Suppose we are given an arbitrary function
 \( \varphi \in L^{1} \).  Define
\[ \tilde S_{n}:= \left|\frac 1 n
\sum_{i=0}^{n-1}\varphi\left(f^i(x)\right)-\int\varphi
d\mu\right|.
\]
Then \( \tilde S_{n}(x) \to 0 \) for \( \mu \) almost every \( x \).  Notice that the large deviation estimates are precisely bounds on the rate of decay of the tail \( \mu\{\tilde S_{n} >  \epsilon\} \).
For \( \epsilon > 0 \) define
\begin{equation}\label{eq:Neps}
  N_{\epsilon}(x):= \min\{N: \tilde S_{n}\leq \epsilon \quad \forall \ n\geq N\}.
 \end{equation}

\begin{lemma}\label{lem:ldtail}
 Let \( A\subseteq M \) be such that \(d\mu/dm > c \) on \( A \) for some \( c>0 \). Suppose that given \( \varphi\in L^{1} \) and \( \epsilon > 0 \)  there exists  \( \xi: \mathbb N \to \mathbb R^{+} \) such that
\(
 LD_{\mu}(\varphi, \epsilon, n) \leq \xi(n).   \)
Then for every \( n\geq 1 \) we have
\[
 m(\{N_{\epsilon}>n\}\cap A) \leq \frac 1c\sum_{\ell \geq n} \xi (\ell).
   \]
\end{lemma}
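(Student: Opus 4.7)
The plan is straightforward and the main content is a set-theoretic unfolding of the definition of $N_\epsilon$ followed by subadditivity and a comparison between $\mu$ and $m$ on $A$.

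First I would unfold the definition of $N_\epsilon(x)$. Since $N_\epsilon(x)$ is the smallest $N$ such that $\tilde S_k(x) \leq \epsilon$ for all $k \geq N$, the event $\{N_\epsilon > n\}$ is exactly the set of points $x$ for which $n$ itself fails the defining condition, i.e.\ there exists some $\ell \geq n$ with $\tilde S_\ell(x) > \epsilon$. Therefore
\[
\{N_\epsilon > n\} \;=\; \bigcup_{\ell \geq n} \{\tilde S_\ell > \epsilon\}.
\]

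Next, by countable subadditivity of $\mu$ and the hypothesis $LD_\mu(\varphi,\epsilon,\ell) \leq \xi(\ell)$, I would obtain
\[
\mu(\{N_\epsilon > n\}) \;\leq\; \sum_{\ell \geq n} \mu(\tilde S_\ell > \epsilon) \;=\; \sum_{\ell \geq n} LD_\mu(\varphi,\epsilon,\ell) \;\leq\; \sum_{\ell \geq n} \xi(\ell).
\]

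Finally, I would convert the $\mu$-bound into an $m$-bound on $A$. Since $d\mu/dm > c$ on $A$, for every measurable $B$ we have $\mu(B \cap A) \geq c\, m(B \cap A)$, so
\[
m(\{N_\epsilon > n\} \cap A) \;\leq\; \frac{1}{c}\, \mu(\{N_\epsilon > n\} \cap A) \;\leq\; \frac{1}{c}\,\mu(\{N_\epsilon > n\}) \;\leq\; \frac{1}{c}\sum_{\ell \geq n} \xi(\ell),
\]
which is the desired estimate.

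There is no real obstacle here; the lemma is essentially a bookkeeping step converting a pointwise tail statistic $N_\epsilon$ into a bound expressed directly in terms of the large deviation rate $\xi$, transported from $\mu$ to $m$ via the density lower bound. The only point to be careful about is the direction of the inequality between $N_\epsilon > n$ and the $\tilde S_\ell > \epsilon$ events (one must use \emph{some} $\ell \geq n$, not all), which is immediate from the definition as the minimum.
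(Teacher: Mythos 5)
Your proof is correct and follows essentially the same route as the paper: unwind the definition of $N_\epsilon$ to express $\{N_\epsilon>n\}$ as a union of large-deviation events, apply subadditivity and the hypothesis on $\xi$, and then transfer from $\mu$ to $m$ on $A$ via the density lower bound. The only (cosmetic) difference is that you observe the set identity $\{N_\epsilon>n\}=\bigcup_{\ell\geq n}\{\tilde S_\ell>\epsilon\}$ as an equality, whereas the paper only records the inclusion it needs.
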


\begin{proof}
For  \( \epsilon >0  \)  we have
\[
\{ N_{\epsilon}>n\}\subset  M\setminus \bigcap_{\ell\geq n}\{\tilde S_{\ell}\leq \epsilon\} \subset \bigcup_{\ell \geq n} \{\tilde S_{n} > \epsilon\}.
 \]
The assumption on the density gives
 \( m(B) \leq
\|dm/d\mu\|_{\infty} \mu(B)\leq \mu(B)/c \)
for any
measurable set \( B\subset A \), and therefore
\[
m(\{ N_{\epsilon}>n\}\cap A)
\leq  \frac 1c\mu (\{ N_{\epsilon}>n\}\cap A)
  \leq  \frac 1c\mu ( \bigcup_{\ell \geq n} \{\tilde S_{\ell}\geq \epsilon\}) \leq \frac 1c\sum_{\ell\geq  n} \xi (n).
 \]
  The last inequality uses the
  assumption on the large deviation rate function which gives
\( \mu \{\tilde S_{n}\geq \epsilon\}\leq \xi (n) \).
\end{proof}

\begin{proof}[Proof of Theorem
\ref{th:LD=>tower}]
By the expansivity assumption on \( \mu \)
and  a straightforward application of Birkhoff's ergodic theorem we have that \begin{equation}\label{non-unif-expand}
   \lim_{n\to\infty}\frac{1}{n}\sum_{i=0}^{n-1}
   \phi (f^{j}(x)) =\int \phi d\mu =: \lambda < 0
\end{equation}
is satisfied \( \mu \) almost everywhere.
Thus we have that
\[
\mathcal E(x) :=  \min\left\{N: \frac{1}{n} \sum_{i=0}^{n-1}
\phi (f^{j}(x)) \leq  \lambda/2 \ \ \forall n\geq
N\right\}.
\]
 is defined and finite almost everywhere in \( M \).
  Notice that using the notation in \eqref{eq:Neps} for  \( \varphi = \phi \) and \( \epsilon = \lambda/2 \) we have that
 \[ \{\mathcal E>n\}\subseteq \{N_{\epsilon}>n\}.
  \]

In \cite{ALP, Gou} induced Markov maps are constructed and it is shown that that tails of return times have the same rate of decay (polynomial, stretched or exponential)  as the rate of decay of the Lebesgue measure of \( m\{\mathcal E > n \}
 \). The conclusion therefore follows  from an application of Lemma \ref{lem:ldtail},
 substituting the corresponding polynomial or (stretched) exponential bounds. We just need to specify the set A on which the density of \( \mu \) is bounded below.

 For the  polynomial case we take advantage of a result of \cite{AlvDiaLuz}  where it is shown that there exists a ball \( \Delta\subset \operatorname{supp}  (\mu) \)
centred at a point \( p \) whose preimages are dense in the support of \( \mu \), such that
the  density of \( \mu  \) with respect to Lebesgue is uniformly bounded below on \( \Delta \). This is sufficient for the purposes of applying the construction of \cite{ALP} which indeed only requires the existence of such a ball and where the required tail estimates are then formulated in terms of the decay of \( m(\{\mathcal E>n\}\cap \Delta \)). In the stretched and exponential case we apply instead the arguments of \cite{Gou} which which rely on  somewhat more global assumptions and therefore require a control of the density on the entire support of \( \mu \). For this reason
we need to include the boundedness from below of the density
as part of our assumptions.
Theorem \ref{th:LD=>tower}
 is now a direct consequence of the following where we let \( A=\Delta \) in the polynomial case, or \( A=\operatorname{supp}(\mu) \) in the other cases.
\end{proof}

\section{Gibbs-Markov structures for maps with critical/singular sets}
\label{sec:unbdev}

In this section we consider maps with critical/singular sets and prove Theorem \ref{th:CS-corr=>tower}.
We shall follow a similar strategy used in the proof of Theorem \ref{th:corr=>tower}
and once again we aim to apply the construction and estimates of \cite{ALP, Gou}. A main difference here is that the function \( \log \|Df^{-1}\| \) is not necessarily H\"older continuous and therefore we cannot apply directly the results of Theorem
\ref{th:DC=>LD} which give bounds on the large deviation rates. Moreover, we also need to consider an additional function related to the recurrence to the critical/singular set.
We let
\begin{equation*}
%\label{def:phi2}
\phi_{1}(x) = \log \|Df^{-1}\| \quad \text{ and }
\quad
\phi_{2}(x)=\phi_{2}^{(\delta)}(x)=\begin{cases}
-\log\dist(x,\mathcal C)&\text{ if $\dist(x,\mathcal C)<\delta$ },\\
\frac{\log\delta}{\delta}(\dist(x,\mathcal C)-2\delta)&\text{ if $\delta\leq \dist(x,\mathcal C)<2\delta$ },\\
0&\text{ if $\dist(x,\mathcal C)\geq2\delta$},
\end{cases}
\end{equation*}
where $\delta>0$ is a small constant to be fixed later.  We remark that \( \phi_{2}(x)= -\log \dist(x,\mathcal C) \) in the \( \delta \) neighbourhood  and \( \phi_{2}(x)=0 \) outside a \( 2\delta \) neighbourhood of the critical set \( \mathcal C \). The definition in the remaining region is motivated by the requirement that the function be H\"older continuous except at the critical/singular set. We do need some large deviation estimates for these functions as we had in \eqref{eg:ld} for the local diffeomorphism case. These are provided in the following

\begin{proposition}
\label{prop:DC=>LD-nonempty-CS}
Let \( f: M\to M \) be a \(
C^{1+} \)  local diffeomorphism
outside
a nondegenerate critical set \( \mathscr C \).
Suppose that
\( f \) admits an ergodic expanding acip \( \mu \) with
\( d\mu/d\l \in L^{p}(m) \) for some \( p>1 \);
\begin{enumerate}
\item
if
there exists \( \beta > 1\) such that
\( \cv_{\mu} (\varphi, \psi \circ f^{n}) \lesssim n^{-\beta}
\)   for every \( \varphi \in \mathcal H \) and \( \psi \in L^{\infty} \),
then there is  $C'>0$ such that
\(\ld_{\mu}(\phi_i, \epsilon,n)\leq C'\,n^{-\beta+\gamma}\), for \( i=1,2 \).
\end{enumerate}
Suppose moreover that  \( d\mu/d\l \) is uniformly bounded away from 0 on its support;
\begin{enumerate}
 \item[(2)]
if there exist \( \tau, \theta > 0 \)
such that  \(
\cv_{\mu} (\varphi, \psi \circ f^{n}) \lesssim e^{-\tau n^{\theta}}\)
for every \( \varphi \in \mathcal H \) and \( \psi \in L^{\infty} \), then
there exists \( \zeta>0 \) such that for any \( \gamma>0 \) and \( \epsilon>0 \) sufficiently small there is $C'>0$ such that
\(
 \ld_{\mu}(\phi_i, \epsilon,n) \leq
C' \e^{-\zeta n^{\theta/3-\gamma }}
\) for \( i=1, 2 \).
\end{enumerate}
\end{proposition}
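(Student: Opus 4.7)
The plan is to reduce to the H\"older case of Theorem~\ref{th:DC=>LD} by approximating the singular observables $\phi_1,\phi_2$ with H\"older continuous truncations, and to control the resulting approximation error using the hypothesis $d\mu/dm\in L^p(m)$ together with the non-degeneracy condition (C0).

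First I would introduce, for a small parameter $\delta>0$ to be optimised later as a function of $n$, a H\"older approximant $\widetilde\phi_i^{(\delta)}$ agreeing with $\phi_i$ on $\{d(x,\mathcal C)\ge 2\delta\}$ and interpolated as in the definition of $\phi_2^{(\delta)}$ on $\{d(x,\mathcal C)<2\delta\}$, capped at $|\log\delta|$. Using (C2)--(C3) one records the norm bounds
\[
\|\widetilde\phi_i^{(\delta)}\|_\infty\lesssim |\log\delta|\qquad\text{and}\qquad \|\widetilde\phi_i^{(\delta)}\|_{\mathcal H_\alpha}\lesssim \delta^{-\alpha'},
\]
for some $\alpha'>0$ depending only on $\alpha$ and the geometric constants. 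Splitting the deviation event through the triangle inequality for Birkhoff sums gives
\[
\ld_\mu(\phi_i,\epsilon,n)\le \ld_\mu(\widetilde\phi_i^{(\delta)},\epsilon/2,n)+\mu\!\left(\tfrac1n\sum_{j=0}^{n-1}|\phi_i-\widetilde\phi_i^{(\delta)}|\circ f^j+\Bigl|\textstyle\int(\phi_i-\widetilde\phi_i^{(\delta)})\,d\mu\Bigr|>\tfrac\epsilon2\right).
\]

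For the first term I would apply Theorem~\ref{th:DC=>LD} to $\widetilde\phi_i^{(\delta)}$ using the explicit constants in Propositions~\ref{lem:poly} and~\ref{lem:strexp}. In the polynomial case this produces a bound of the form $\delta^{-\alpha'}|\log\delta|^{O(1)}\epsilon^{-O(1)} n^{-\beta}$; in the stretched exponential case, exploiting that the $\tau'$ in Proposition~\ref{lem:strexp} scales like $\epsilon^2/\|\varphi\|_\infty^2$, one gets a bound of the form $\delta^{-\alpha'}\exp\!\bigl(-c\epsilon^2 n^{\theta/(\theta+2)}/|\log\delta|^2\bigr)$. For the second term, H\"older's inequality with $d\mu/dm\in L^p(m)$ combined with the non-degeneracy (C0) yields
\[
\int\bigl|\phi_i-\widetilde\phi_i^{(\delta)}\bigr|\,d\mu \lesssim \|d\mu/dm\|_p\,\delta^{d(1-1/p)}|\log\delta|,
\]
and then Markov's inequality together with the invariance of $\mu$ bounds the second term by a constant multiple of $\delta^{d(1-1/p)}|\log\delta|/\epsilon$, provided $\delta$ is small enough for the integral above to be less than $\epsilon/4$.

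Finally I would optimise $\delta=\delta(n)$. In the polynomial case, setting $\delta=n^{-\beta/(d(1-1/p))}$ balances the error at order $n^{-\beta}$, and the remaining $\delta^{-\alpha'}=n^{O(\alpha')}$ is absorbed into the allowed $n^\gamma$ loss by taking the H\"older exponent $\alpha$ of the approximation sufficiently small. In the stretched exponential case, setting $\delta=\exp(-n^\zeta)$ converts the $|\log\delta|^{-2}$ factor into $n^{-2\zeta}$, so the two rates to balance are $\theta/(\theta+2)-2\zeta$ from the LD bound and $\zeta$ from the error term; the optimal choice of $\zeta$ produces precisely the stretched exponential exponent stated in the proposition, with the $\gamma$ loss absorbing the subdominant contribution from $\alpha' n^\zeta$. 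The principal obstacle, and the reason the exponential case of Theorem~\ref{th:exp} does not extend to this setting, is precisely the $\|\widetilde\phi_i^{(\delta)}\|_\infty^{-2}$ factor in the exponent of Proposition~\ref{lem:strexp}: it forces the above balance and produces the reduction of the exponent, leaving no room to close a fully exponential estimate.
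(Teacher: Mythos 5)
Your plan is correct and follows essentially the same strategy as the paper: replace $\phi_1,\phi_2$ by H\"older truncations, feed the explicit constants of Propositions~\ref{lem:poly} and~\ref{lem:strexp} into the H\"older norm and sup norm of the truncation, control the error contribution coming from the tail near $\mathcal C$ via H\"older's inequality with $d\mu/dm\in L^p(m)$ and (C0), and then optimise the truncation level as a function of $n$. The paper parametrises the cutoff by a value $k$ (the truncation $\phi_{i,k}$ is $\min\{\phi_i,k\}$) rather than by a distance $\delta$, but these are interchangeable via $k=|\log\delta|$, and the norm bounds $\|\phi_{i,k}\|_\infty\lesssim k$, $\|\phi_{i,k}\|_{\mathcal H_\alpha}\lesssim \alpha^{-1}k\mathrm{e}^{\alpha k}$ in Lemma~\ref{lem:noname} match your $\|\widetilde\phi_i^{(\delta)}\|_\infty\lesssim|\log\delta|$, $\|\widetilde\phi_i^{(\delta)}\|_{\mathcal H_\alpha}\lesssim\delta^{-\alpha'}$.

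The one place you deviate is in the control of the error term. The paper decomposes the deviation event by whether some $f^j(x)$, $0\le j<n$, lands in the bad set $A_{i,k}=\{\phi_i\ge k\}$, and then pays a union bound $n\,\mu(A_{i,k})\lesssim n\,\mathrm{e}^{-\zeta k}$ (Lemma~\ref{lem:Aik}). You instead compare Birkhoff sums of $\phi_i$ and its truncation directly, using Markov's inequality plus invariance of $\mu$ to get an error $\lesssim \epsilon^{-1}\int|\phi_i-\widetilde\phi_i^{(\delta)}|\,d\mu$, which your H\"older/(C0) estimate controls by $\delta^{d(1-1/p)}|\log\delta|$, i.e. $k\,\mathrm{e}^{-\zeta k}$. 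Your route avoids the extra factor of $n$ (trading it for $\log n$), which slightly simplifies the subsequent optimisation of $k(n)$, but it changes nothing in the final exponents; both yield $n^{-\beta+\gamma}$ in the polynomial case and a stretched exponential with exponent roughly $\theta/(3\theta+6)-\gamma$ in the subexponential case. Your closing remark about the $\|\widetilde\phi_i^{(\delta)}\|_\infty^{-2}$ factor in Proposition~\ref{lem:strexp} being the obstruction to a genuinely exponential estimate is also exactly what the paper observes. One small imprecision: you invoke (C2)--(C3) for the H\"older bound on the truncation of $\phi_1=\log\|Df^{-1}\|$, but (C3) concerns $\log|\det Df|$; only (C2) is needed here.
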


The proof of Proposition \ref{prop:DC=>LD-nonempty-CS} is relatively technical and we postpone it to the following section. Assuming the conclusions of this proposition for the moment, Theorem \( B \) follows from

\begin{theorem}\label{th:LD=>tower2}
Let \( f: M\to M \) be a \(
C^{1+} \)  local diffeomorphism
outside
a nondegenerate critical set \( \mathscr C \).
Suppose that
\( f \) admits an ergodic expanding acip \( \mu \) with \( d\mu/d\l \) with
\( d\mu/d\l \in L^{p}(m) \) for some \( p>1 \).
Then \( \phi_{i}\in L^{1}(\mu) \) for \( i=1,2 \). Moreover,
\begin{enumerate}
\item
if there exists \( \beta > 1\) such that for  small $\epsilon>0$ we have
$\ld_{\mu}(\phi_{i},\epsilon,n)\lesssim n^{-\beta}$ for \( i=1,2 \), then there is a Gibbs-Markov induced map with
$\m(\mathscr R_{n}) \lesssim n^{-\beta+1}$.
\end{enumerate}
Suppose moreover that  \( d\mu/d\l \) is uniformly bounded away from 0 on its support;
\begin{enumerate}
\item[(2)] if there exist \( \tau, \theta > 0 \) such that for small
 \( \epsilon > 0  \) we have
$\ld_{\mu}(\phi_{i},\epsilon,n)\lesssim e^{-\tau n^{\theta}}$ for \( i=1,2  \), then there is a Gibbs-Markov induced map with $\m(\mathscr R_{n}) \lesssim e^{-\tau'n^{\theta}}, $ for some~$\tau'>~0$.
\end{enumerate}
\end{theorem}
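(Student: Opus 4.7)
The plan is to mimic exactly the strategy used for Theorem \ref{th:LD=>tower}, but now controlling both functions $\phi_1$ and $\phi_2$ simultaneously so that the hypotheses of the constructions in \cite{ALP, Gou} are verified. The two auxiliary functions will play the role of the expansion and recurrence data respectively.

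First I would verify $\phi_i\in L^1(\mu)$. For $\phi_1=\log\|Df^{-1}\|$ this is immediate from the definition of expanding acip. For $\phi_2$, using $d\mu/dm\in L^p(m)$ and H\"older with conjugate exponent $q$, we have $\int|\phi_2|\,d\mu\leq\|d\mu/dm\|_p\,\|\phi_2\|_{L^q(m)}$; the $L^q(m)$ norm is finite because condition (C0) gives $m\{-\log\mathrm{dist}(x,\mathcal C)>t\}\le B e^{-td}$, so $-\log\mathrm{dist}(\cdot,\mathcal C)\in L^q(m)$ for every finite $q$. In particular, by dominated convergence, $\int\phi_2^{(\delta)}\,d\mu\to 0$ as $\delta\to 0$, which lets us shrink its $\mu$-average below any prescribed $\epsilon>0$.

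Next, fix such a $\delta$ and, mirroring the local diffeomorphism argument, define
\[
\mathcal E(x):=\min\!\Big\{N:\tfrac1n\sum_{i=0}^{n-1}\phi_1(f^i x)\le \lambda/2\ \ \forall n\ge N\Big\},\quad \mathcal R(x):=\min\!\Big\{N:\tfrac1n\sum_{i=0}^{n-1}\phi_2(f^i x)\le 2\epsilon\ \ \forall n\ge N\Big\},
\]
where $\lambda=\int\phi_1\,d\mu<0$. Both are finite $\mu$-a.e.\ by Birkhoff, and by definition
$\{\mathcal E>n\}\subset\{N_{|\lambda|/2}>n\}$ for $\varphi=\phi_1$ and $\{\mathcal R>n\}\subset\{N_\epsilon>n\}$ for $\varphi=\phi_2$, so the hypothesized large deviation bounds for $\phi_1,\phi_2$ translate directly into $\mu$-tail bounds at the same polynomial rate $n^{-\beta}$ or stretched-exponential rate $e^{-\tau n^\theta}$. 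Applying Lemma \ref{lem:ldtail} on the appropriate set $A$ (a small ball $\Delta$ around a dense-preimage point in the polynomial case, following \cite{AlvDiaLuz}, or $A=\operatorname{supp}(\mu)$ in the stretched case where boundedness below on all of $\operatorname{supp}(\mu)$ is assumed), and summing the tail series $\sum_{\ell\ge n}\xi(\ell)$, we obtain $m(\{\max(\mathcal E,\mathcal R)>n\}\cap A)\lesssim n^{-\beta+1}$ respectively $\lesssim e^{-\tau' n^\theta}$ for a slightly smaller $\tau'$.

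Finally I would feed these tail bounds into the Gibbs--Markov constructions: \cite{ALP} yields the polynomial case requiring only the local density bound on $\Delta$ and produces a Gibbs--Markov induced map whose return tail matches the Lebesgue tail of $\max(\mathcal E,\mathcal R)$; \cite{Gou} yields the stretched exponential case under the global density lower bound on $\operatorname{supp}(\mu)$. The main obstacle is making sure that the chosen recurrence threshold $2\epsilon$ (and hence $\delta$) is small enough to satisfy the quantitative smallness required by \cite{ALP,Gou} relative to $\lambda$ and the non-degeneracy constants $B,d,\eta$; this is arranged first by taking $\epsilon$ small in the abstract setup and then using the freedom to apply the LD hypothesis at that small threshold (the implicit constants in the polynomial/stretched bounds may deteriorate but not the exponent, which is all that matters for the tail rate in the conclusion).
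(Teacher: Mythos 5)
Your proposal is correct and follows essentially the same route as the paper: verify $\phi_i\in L^1(\mu)$ via condition (C0) and H\"older (this is the content of Lemma~\ref{lem:Aik}), shrink $\delta$ so $\int\phi_2^{(\delta)}\,d\mu<\epsilon$, introduce the expansion and recurrence time functions, relate their tails to the sets $\{N_\epsilon>n\}$, invoke Lemma~\ref{lem:ldtail} to convert the large deviation bounds into Lebesgue tail bounds, and feed these into the constructions of \cite{ALP,Gou}. The only (cosmetic) difference is that you explicitly distinguish $A=\Delta$ in the polynomial case from $A=\operatorname{supp}(\mu)$ in the stretched case, which is actually slightly cleaner than the paper's phrasing, and you note, as the paper does by reference, that $\epsilon$ need only be a small map-dependent constant.
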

Notice that the second part of the theorem applies in particular if \( \theta=1 \), i.e. in the exponential case. Notice also that the large deviation rates are not assumed to be uniform in \( \epsilon \).  We begin by introducing the natural auxiliary function
\[ \phi_{0}(x):= -\log \dist(x, \mathcal C).
 \]
Then, for \( i=0,1,2 \) and \( k> 0 \) we let
\[
 A_{i, k} := \{x: \phi_{i}(x) \geq k\}
\]

\begin{lemma} \label{lem:Aik}
There exists \( \zeta>0 \) such that
for all \( k >0  \) and for all \( i=0,1,2 \)
we have
\[
(1)\   \mu(A_{i, k}) \lesssim e^{-\zeta k };  \qquad
  (2) \ \phi_{i}\in L^{1}(\mu);  \quad \text{ and }
\quad (3)  \ \int \phi_{2}^{(\delta)}d\mu \to 0  \text{ as } \delta \to 0.
  \]
  \end{lemma}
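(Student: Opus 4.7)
The plan is to reduce everything to tail estimates for the single function $\phi_{0}(x)=-\log\dist(x,\mathcal C)$, since $\phi_{1}$ and $\phi_{2}$ are both controlled by $\phi_{0}$ via the non-degeneracy conditions (C1) and the explicit definition of $\phi_{2}^{(\delta)}$. The transfer from Lebesgue to $\mu$ will use the standing hypothesis $d\mu/dm\in L^{p}(m)$.

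First I would establish (1) for $i=0$. Noting that $A_{0,k}=\{x:\dist(x,\mathcal C)\le e^{-k}\}$, condition (C0) yields $m(A_{0,k})\le B\,e^{-kd}$. Writing $\mu(A_{0,k})=\int \mathbf 1_{A_{0,k}}\,(d\mu/dm)\,dm$ and applying H\"older's inequality with conjugate exponents $p$ and $p'=p/(p-1)$ gives
\[
\mu(A_{0,k})\le \|d\mu/dm\|_{p}\cdot m(A_{0,k})^{1/p'}\lesssim e^{-\zeta_{0}k},\qquad \zeta_{0}:=d(1-1/p)>0.
\]
For $\phi_{1}$, condition (C1) supplies $\|Df(x)^{-1}\|\le B\dist(x,\mathcal C)^{-\eta}$, so $\phi_{1}\le \log B+\eta\phi_{0}$, which gives $A_{1,k}\subset A_{0,(k-\log B)/\eta}$ and hence $\mu(A_{1,k})\lesssim e^{-(\zeta_{0}/\eta)k}$. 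For $\phi_{2}^{(\delta)}$, on $\{\dist<\delta\}$ it equals $\phi_{0}$ while on $\{\delta\le \dist<2\delta\}$ it is bounded by $-\log\delta$; so for $k>-\log\delta$ one has $A_{2,k}\subset A_{0,k}$, while for $0<k\le-\log\delta$ the inequality $\mu(A_{2,k})\lesssim e^{-\zeta k}$ is trivial after absorbing a constant. Setting $\zeta:=\min(\zeta_{0},\zeta_{0}/\eta)$ (and adjusting implicit constants) delivers (1) uniformly in $i$.

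To obtain (2), use the layer-cake identity $\int|\phi_{i}|\,d\mu=\int_{0}^{\infty}\mu(|\phi_{i}|>t)\,dt$. The positive part is controlled directly by (1) since $\int_{0}^{\infty}e^{-\zeta t}\,dt<\infty$. For the negative part, $\phi_{0}\ge -\log\operatorname{diam}(M)$ by compactness of $M$, and $\phi_{2}\ge 0$ by construction, so these contribute only bounded terms. For $\phi_{1}$, (C1) also gives $\|Df(x)^{-1}\|\ge B^{-1}\dist(x,\mathcal C)^{\eta}$, whence $\phi_{1}\ge -\log B-\eta\phi_{0}$, so $(\phi_{1})^{-}\lesssim 1+\eta\phi_{0}^{+}$, which is in $L^{1}(\mu)$ because $\phi_{0}\in L^{1}(\mu)$.

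Finally for (3), observe that for every $x\notin\mathcal C$ we have $\phi_{2}^{(\delta)}(x)=0$ as soon as $\delta<\dist(x,\mathcal C)/2$, so $\phi_{2}^{(\delta)}\to 0$ pointwise $\mu$-a.e. A case-by-case inspection of the three regions in the definition of $\phi_{2}^{(\delta)}$ shows $0\le \phi_{2}^{(\delta)}\le \phi_{0}^{+}+\log 2$ uniformly in $\delta\in(0,1)$; since $\phi_{0}^{+}\in L^{1}(\mu)$ by (2), the dominated convergence theorem yields $\int\phi_{2}^{(\delta)}\,d\mu\to 0$.

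The only genuinely non-routine ingredient is the first step: transferring the Lebesgue bound from (C0) to a $\mu$-bound without a pointwise upper bound on the density. The $L^{p}$ assumption, used via H\"older, is exactly what is needed, and the loss $d\mapsto d(1-1/p)$ in the exponent is harmless since we only need \emph{some} $\zeta>0$. Everything else is bookkeeping through (C1) and the piecewise definition of $\phi_{2}^{(\delta)}$.
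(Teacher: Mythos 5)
Your proof is correct and follows essentially the same route as the paper: H\"older's inequality with the $L^{p}$ assumption on the density to transfer the Lebesgue estimate from (C0) to a $\mu$-estimate, condition (C1) to dominate $\phi_{1}$ by $\phi_{0}$, and a layer-cake/tail-sum argument for integrability. The only mild divergence is in part (3), where you invoke dominated convergence with the uniform majorant $\phi_{0}^{+}+\log 2$, whereas the paper splits $\int\phi_{2}^{(\delta)}d\mu$ explicitly over the three regions $\{\dist<\delta\}$, $\{\delta\le\dist<2\delta\}$, $\{\dist\ge 2\delta\}$ and estimates each term; both rest on $\phi_{0}\in L^{1}(\mu)$ and the exponential tail, and yours is arguably slightly cleaner. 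You are also a bit more careful than the paper's terse $\int\phi_{i}\,d\mu\le\sum_{n}\mu(A_{i,n})$ by explicitly disposing of the negative part of $\phi_{1}$ via the lower bound in (C1); this is a worthwhile clarification but not a different argument.
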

\begin{proof}
Recall that we have assumed that \( d\mu/dm\in L^{p}(m) \) for some \( p>1 \). We define \( q>1 \) by the usual condition \( 1/p+1/q=1 \).
Then, by H\"older's inequality,  we have
\[
\mu(A_{0, k}) =
\int \mathbb 1_{A_{0, k}} \frac{d\mu}{dm} dm
\leq \|\mathbb 1_{A_{0, k}}\|_{q} \left\|\frac{d\mu}{dm}\right\|_{p}\leq
\m(A_{0, k})^{1/q}\left\|\frac{d\mu}{dm}\right\|_{p} \lesssim \m(A_{0, k})^{1/q}
\] and thus  (1) for \( i=0 \) follows directly from condition (C0). For \( i=2 \) we also have the result since \( \phi_{2}(x)=\phi_{0}(x) \)  as long as \( \dist (x,\mathcal C) \leq  \delta \) or, equivalently, \( k\geq -\log \delta \).
For \( i=1 \) we use condition (C1) which implies that  there exists a constant \( \tilde B>0 \) such that for every \( x\in M\setminus \mathcal C \) we have
\begin{equation}\label{eq:logdist}
-\tilde B+\eta\log d(x, \mathcal C) \leq \phi_{1}(x) \leq \tilde B-\eta\log d(x, \mathcal C).
 \end{equation}
 Therefore there exists some constant \( \tilde\eta>0 \) such that
\( \{\phi_{1}\geq k\}\subseteq \{\phi_{0}> \tilde\eta k\} \) which then clearly gives the conclusion for \( \phi_{1} \) and thus completes the proof of (1).
The integrability of \( \phi_{i} \) in (2) now follows easily from the fact that for \( i=0,1,2 \)
\[
 \int \phi_{i}d\mu \leq \sum_{n=1}^{\infty} \mu(A_{i,n})
  \]
and using (1). Finally, to prove (3) we let \( k_{1}=-\log \delta \), \( k_{2}=-\log 2\delta \) and write
\[
\int \phi_{2}^{(\delta)}d\mu =  \int_{A_{0, k_{1}}} \phi_{2}^{(\delta)}d\mu + \int_{A_{0, k_{2}\setminus A_{0, k_{1}}   }} \phi_{2}^{(\delta)}d\mu + \int_{M\setminus A_{0, k_{2}}} \phi_{2}^{(\delta)}d\mu
 \]
Since \( \phi_{2}^{(\delta)}(x) = 0 \) for \( x\in M\setminus A_{0, k_{2}} \), the third term vanishes. For the first term notice that \( \phi_{2}^{(\delta)}(x) = \phi_{0}(x) \) for \( x\in A_{0, k_{1}} \). Since \( \phi_{0}\in L^{1}(\mu) \) and \( \mu(A_{0, k_{1}}) \to 0 \) as \( \delta \to 0 \), it follows that
\[ \int_{A_{0, k_{1}}}\phi_{0} d\mu \to 0 \quad \text{ as } \quad  \delta \to 0.
  \]
Finally, for the middle term we have \( \phi_{2}^{(\delta)}(x) \leq -\log \delta \) for \( x\in A_{0, k_{2}\setminus A_{0, k_{1}}   } \), and so
\[
\int_{A_{0, k_{2}\setminus A_{0, k_{1}}   }} \phi_{2}^{(\delta)}d\mu
\leq (-\log \delta)  \mu(A_{0, k_{2}}) \lesssim (-\log \delta) e^{-\zeta k_{2}}
\leq (-\log \delta) (2\delta)^{\zeta}
 \]
 which clearly tends to zero as \( \delta \to 0 \).
  \end{proof}

\begin{proof}[Proof of Theorem \ref{th:LD=>tower2}]

We follow a similar strategy as in the proof of Theorem \ref{th:LD=>tower}, applying the results of \cite{ALP, Gou}. We consider as before the tail \( \{\mathcal E(x)>n\} \) of the expansion time related to the function \( \phi_{1} \) but also need to consider an analogous term related to the function \( \phi_{2} \).
More precisely we need to show that for \( \epsilon> 0 \) sufficiently small,
 there exists \( \delta> 0 \) such that
 \[     \limsup_{n\to+\infty}
\frac{1}{n} \sum_{j=0}^{n-1}- \log \dist_{\delta}(f^{j}(x), \mathcal C)
 \leq \epsilon. \]
 We note that  it is sufficient to have this for some \( \epsilon>0 \) depending only on the map, see e.g. \cite[Remark 3.8]{Alv}.
 In fact, fixing such an \( \epsilon \),  from Lemma \ref{lem:Aik}
 we can choose \( \delta>0 \) sufficiently small so that
\begin{equation} \label{slow-recurrence}
    \limsup_{n\to+\infty}
\frac{1}{n} \sum_{j=0}^{n-1}- \log \dist_{\delta}(f^{j}(x), \mathcal C)
\leq     \lim_{n\to+\infty}
\frac{1}{n} \sum_{j=0}^{n-1} \phi_{2}^{(\delta)}(f^{j}(x))
=
\int \phi_{2}^{(\delta)}  d\mu \leq \epsilon.
\end{equation}
We introduce the
\emph{recurrence time} function
\[
\mathcal R_{\epsilon,\delta}(x) = \min\left\{N\ge 1: \frac{1}{n} \sum_{i=0}^{n-1}
- \log \dist_{\delta}(f^{j}(x), \mathcal C)  \leq 2\epsilon, \ \ \forall
n\geq N\right\}
\]
which is defined and finite \( \mu \) almost everywhere in \( M \). Using again the notation in \eqref{eq:Neps} for \( \varphi= \phi_{2}^{(\delta)}\) we have
\[ \{\mathcal R_{\epsilon, \delta}>n\}\subseteq \{N_{\epsilon}>n\}.
 \]
In \cite{ALP, Gou} induced Markov maps are constructed and it is shown that that tails of return times have the same rate of decay (polynomial, stretched or exponential)  as the rate of decay of the Lebesgue measure of
\[
\{x: \mathcal E(x) > n \ \text{ or } \ \mathcal R_{\epsilon,\delta}(x) > n \}.
\]
The conclusion therefore follows  from an application of Lemma \ref{lem:ldtail},
 substituting the corresponding polynomial or (stretched) exponential bounds.
 We note that here we take \( A \) equal to the whole support of \( \mu \) since we have the density uniformly bounded below by assumption.
\end{proof}

\section{Large deviations for the special non h\"older observables}
\label{sec:nonhold}

In this section we prove Proposition \ref{prop:DC=>LD-nonempty-CS}.
Our strategy is to approximate \( \phi_{1} \) and \( \phi_{2} \)
by  ``truncated'' functions which are H\"older continuous.
For all \( k >0  \) and \( i=1,2 \), we let
\[ A_{i, k} := \{x: \phi_{i}(x) \geq k\}
\]
We now  define
\[
\phi_{i,k}(x):= \begin{cases}
\phi_{i}(x) &\text{ if } x\in  M\setminus A_{i, k}
\\
k &\text{ if } x\in A_{i,k}
\end{cases}
\]
Then we can write, for \( i=1,2 \) and \( n\in\mathbb N \)
\begin{align}
\mu\left(\frac{1}{n}|S_{n}\phi_{i}(x)|> \epsilon\right)
& \leq \mu \left(
\left\{\frac{1}{n}|S_{n}\phi_{i}(x)|> \epsilon\right\}
\setminus \bigcup_{j=0}^{n-1} f^{-j}A_{i, k}\right)
+ \mu\left(\bigcup_{j=0}^{n-1} f^{-j}(A_{i, k})\right)
\notag
\\&
\leq \mu\left(\frac{1}{n}|S_{n}\phi_{i, k}(x)| > \epsilon\right)
 + \sum_{j=0}^{n-1}\mu\left(f^{-j}(A_{i, k})\right)
 \notag
 \\&
 \leq \mu\left(\frac{1}{n}|S_{n}\phi_{i, k}(x)| > \epsilon\right)
 + n \mu(A_{i, k}).
  \label{ldder}
 \end{align}
The invariance of the measure \( \mu \) is used in the last step.
The second term in \eqref{ldder} is now easily bounded as in the following

To bound the first term of \eqref{ldder}, notice that
\( \phi_{i,k} \) is H\"older continuous with exponent \( \alpha \) for every \( \alpha\in (0, 1] \). Therefore we shall use our assumptions which apply to H\"older continuous observables, in particular we will apply the conclusions of Propositions \ref{lem:poly} and \ref{lem:strexp} with \( \B=\mathcal H_{\alpha} \).
 For this we need to obtain bounds for the \( L^{\infty} \) and H\"older norms of the the functions \( \phi_{i,k} \).

\begin{lemma} \label{lem:noname}
For any \( \alpha\in (0,1] \) and \( i=1,2 \) we have
\(
\|\phi_{i,k} \|_{\mathcal H_{\alpha}} \lesssim  \alpha^{-1} k e^{\alpha k}
\).
\end{lemma}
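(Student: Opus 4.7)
The plan is to write $\|\phi_{i,k}\|_{\mathcal H_\alpha}=\|\phi_{i,k}\|_\infty+[\phi_{i,k}]_\alpha$ and bound each summand by $\lesssim\alpha^{-1}ke^{\alpha k}$. The $L^\infty$ bound is essentially free: by construction $\phi_{i,k}\le k$, while $\phi_2\ge 0$ and $\phi_1=\log\|Df^{-1}\|$ is continuous on $M\setminus\mathcal C$ and blows up to $+\infty$ near $\mathcal C$ (since $\sigma_{\min}(Df)\to 0$ there by continuity of singular values combined with $Df$ being singular on $\mathcal C$), so $\phi_1$ admits a finite global infimum $-C_\star$. Thus $\|\phi_{i,k}\|_\infty\le k+C_\star$, which is swallowed by $\alpha^{-1}ke^{\alpha k}$.

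For the H\"older seminorm I would first establish a Lipschitz-type bound $|\phi_{i,k}(x)-\phi_{i,k}(y)|\le L_k|x-y|$ with $L_k\lesssim e^k$ (possibly multiplied by an $\alpha^{-1}$), and then interpolate with the trivial bound $|\phi_{i,k}(x)-\phi_{i,k}(y)|\le 2\|\phi_{i,k}\|_\infty$ using the elementary inequality $\min(a,b)\le a^\alpha b^{1-\alpha}$ for $a,b\ge 0$ and $\alpha\in[0,1]$, yielding
$$|\phi_{i,k}(x)-\phi_{i,k}(y)|\le (L_k|x-y|)^\alpha(2k)^{1-\alpha},$$
so $[\phi_{i,k}]_\alpha\le L_k^\alpha(2k)^{1-\alpha}\lesssim ke^{\alpha k}$. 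For $\phi_{2,k}$ the Lipschitz bound is straightforward: in the inner region $\{d(\cdot,\mathcal C)<\delta\}$ the function $\phi_2=-\log d$ has $|\nabla\phi_2|=1/d$, and because truncation at level $k$ confines the non-constant portion to $\{d>e^{-k}\}$, the global Lipschitz constant satisfies $L_k\lesssim e^k$; the intermediate region $\{\delta\le d<2\delta\}$ and the exterior $\{d\ge 2\delta\}$ contribute only constants depending on $\delta$.

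For $\phi_{1,k}$ I would invoke condition (C2): for $d(x)\le d(y)$, setting $r=d(y)/d(x)\ge 1$,
$$|\phi_1(x)-\phi_1(y)|\le B|\log d(y)-\log d(x)|=B\log r.$$
I would then apply the elementary inequality $\log r\le\alpha^{-1}r^\alpha$ for $r\ge 1$, $\alpha\in(0,1]$ (proved by noting the derivatives satisfy $1/r\le r^{\alpha-1}$), together with $|x-y|\ge|d(y)-d(x)|=d(x)(r-1)$, to extract
$$|\phi_1(x)-\phi_1(y)|\lesssim\alpha^{-1}d(x)^{-\alpha}|x-y|^\alpha;$$
the $\alpha^{-1}$ in the conclusion of the lemma originates precisely from this step. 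Since (C1) gives the contrapositive $A_{1,k}\subseteq\{d\le B^{1/\eta}e^{-k/\eta}\}$, and $\phi_{1,k}$ is constant on $A_{1,k}$, the effective factor $d(x)^{-\alpha}$ can be replaced by $e^{\alpha k/\eta}\le e^{\alpha k}$ (with the $\eta$-dependence absorbed into the implicit constant). The hard part, and the main obstacle in the argument, is the case analysis for pairs $(x,y)$ with one point inside $A_{1,k}$ and the other outside, both close to $\mathcal C$: since (C1) does \emph{not} give the reverse implication $\{\phi_1<k\}\subseteq\{d\gtrsim e^{-k/\eta}\}$, one must argue, e.g.\ by a dyadic decomposition in the scale of $d_{\min}=\min\{d(x),d(y)\}$ or by a direct triangle-inequality argument along a straight path from $x$ to $y$ crossing the boundary $\{\phi_1=k\}$, that the truncation successfully flattens the wild oscillation of $\phi_1$ near the critical/singular set so that pairs straddling the truncation region still respect the $\lesssim\alpha^{-1}ke^{\alpha k}$ bound.
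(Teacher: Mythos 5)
Your overall strategy is close to the paper's (reduce to $|\log d(y,\mathcal C)-\log d(x,\mathcal C)|$ via (C2), then defeat the logarithm by an $\alpha$-dependent elementary inequality), but the elementary inequality you invoke is the wrong one and the chain does not close. You use $\log r\le\alpha^{-1}r^\alpha$ for $r\ge1$ together with $|x-y|\ge d(x,\mathcal C)(r-1)$; combining these only yields
\[
\frac{|\phi_1(x)-\phi_1(y)|}{|x-y|^\alpha}\ \le\ \frac{B}{\alpha}\,d(x,\mathcal C)^{-\alpha}\Bigl(\frac{r}{r-1}\Bigr)^\alpha,
\]
and the factor $(r/(r-1))^\alpha$ blows up as $r\downarrow 1$ (i.e.\ as $y\to x$), so no Hölder bound follows from this route. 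The inequality actually needed, and the one the paper proves, is $\log(1+z)\le\alpha^{-1}z^\alpha$ for $z>0$, $\alpha\in(0,1]$, equivalently $\log r\le\alpha^{-1}(r-1)^\alpha$. Applied with $z=d(x,y)/d(x,\mathcal C)$, which dominates $r-1$ because $|d(y,\mathcal C)-d(x,\mathcal C)|\le d(x,y)$, this gives directly $\frac{|\phi_1(x)-\phi_1(y)|}{d(x,y)^\alpha}\le B\alpha^{-1}d(x,\mathcal C)^{-\alpha}$ with nothing degenerate near $r=1$. The derivative comparison you offer ($1/r\le r^{\alpha-1}$) proves the inequality you wrote, but that is not the inequality one needs here.

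Two secondary remarks. Your Lipschitz-plus-interpolation argument for $\phi_{2,k}$ is a legitimate alternative to the paper's computation and in fact yields a slightly sharper bound $k^{1-\alpha}e^{\alpha k}$; the paper instead reuses the log estimate on $\{d<\delta\}$ and handles the bridging region $\{\delta\le d<2\delta\}$ separately, which is what the interpolation handles implicitly. And your concern about pairs with $\min(d(x,\mathcal C),d(y,\mathcal C))<e^{-k}$ but $\phi_1<k$ is well taken: (C1) gives only $A_{1,k}\subset\{d\lesssim e^{-k/\eta}\}$, not the converse, and the paper's proof simply restricts to $d(x,\mathcal C)\ge e^{-k}$ without explaining why that suffices. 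The clean resolution is to truncate at a threshold on $d(\cdot,\mathcal C)$ rather than on the value of $\phi_1$ (the rest of the argument, including the measure estimate of Lemma~\ref{lem:Aik} and the approximation step in \eqref{ldder}, is indifferent to which truncation is used), but as written the paper's proof is silent on this point, so you have identified a genuine imprecision rather than a misunderstanding on your part.
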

\begin{proof}
By the definition of \( \phi_{i, k} \) we have  \( \|\phi_{i, k}\|_{\infty} \leq k \) for \( i=1,2 \). Given \( x,y \in M\setminus\mathcal C \) and
 assuming without loss of generality that $\dist(y, \mathcal C)\geq\dist(x, \mathcal C)$ we have
 \begin{equation}\label{eq:phiik}
\frac{ \left|\log \dist(y, \mathcal C)-
\log \dist(x, \mathcal C) \:\right|}{\dist(x,y)^\alpha}\leq \frac{\left|
\log\left(1+\frac{\dist(y, \mathcal C)-\dist(x, \mathcal C)}{\dist(x, \mathcal C)}\right)\right|}{\dist(x, \mathcal C)^\alpha\left(\frac{\dist(x,y)}{\dist(x, \mathcal C)}\right)^\alpha}
\leq \frac{\left|
\log\left(1+\frac{\dist(x,y)}{\dist(x, \mathcal C)}\right)\right|}{\left(\frac{\dist(x,y)}{\dist(x, \mathcal C)}\right)^\alpha} \dist(x, \mathcal C)^{-\alpha}.
\end{equation}
Notice that the function  \( z^{-\alpha}\log(1+z) \) is bounded above with a global maximum  \( z_{0} \) satisfying \( \log (1+z_{0}) = z_{0}\alpha^{-1} (1+z_{0})^{-1} \). Substituting this back into the function we get
\( z_{0}^{-\alpha}\log(1+z_{0}) = \alpha^{-1} z_{0}^{1-\alpha}(1+z_{0})\)  which is bounded by \( 1/\alpha \).  Using this bound in \eqref{eq:phiik} we get, for \( x  \) such that    \( \dist (x, \mathcal C) \geq e^{-k} \),
\[
 \frac{\left|\log \dist(y, \mathcal C)-
\log \dist(x, \mathcal C) \right|}{\dist(x,y)^{\alpha}}\leq \frac 1\alpha\dist(x,\C)^{-\alpha} \leq \frac 1\alpha e^{\alpha k}.
\]
From   (C2) in the nondegeneracy conditions and using that  \( k+\alpha^{-1}e^{\alpha k }\lesssim \alpha^{-1}ke^{\alpha k} \) we thus obtain the required bound
for \( \phi_{1,k} \).

 For \( \phi_{2,k} \) we just need to consider the extra term corresponding to the region where both
\( d(x, \mathcal C)  \) and \( d(y, \mathcal C) \) belong to \((\delta, 2\delta) \). Here  we have
\[
\frac{|\phi_{2,k}(x)-\phi_{2,k}(y)|}{d(x, y)^{\alpha}}
\leq \frac{\log \delta}{\delta}\frac{|d(x, \mathcal C)-d(y, \mathcal C)|}{d(x,y)^{\alpha}} \leq \frac{\log \delta}{\delta} d(x,y)^{1-\alpha}\lesssim
\frac{\log \delta}{\delta}.
 \]
 Keeping in mind that \( \delta \) is fixed, this
  completes the proof for \( \phi_{2,k} \).
\end{proof}

\begin{proof}
We are now ready to estimate the first term in \eqref{ldder}.
From this point onwards, all estimates will apply equally to \( \phi_{1,k} \) and \( \phi_{2, k} \). Thus, to simplify the notation we shall just write \( \phi_{k} \).
We consider first of all the polynomial case.
Substituting the estimates of  Lemma \ref{lem:noname} into the  results of Proposition \ref{lem:poly} we get
\begin{equation}
\label{polyhold}
\mu\left(\frac{1}{n}|S_{n}\phi_{k}(x)| > \epsilon\right)
\lesssim\|\phi_{k}\|_{\mathcal H_\alpha}\|\phi_{k}\|_{\infty}^{2q - 1}\epsilon^{-2q}
 n^{-\beta}
   \lesssim k^{2q}e^{\alpha k}\,n^{-\beta},
 \end{equation}

Using Lemma \ref{lem:Aik} and substituting \eqref{polyhold} into \eqref{ldder} gives
\begin{equation}\label{sn1}
\mu\left(\frac{1}{n}|S_{n}\phi(x)|> \epsilon\right) \lesssim
k^{2q}e^{\alpha k} n^{-\beta} + n e^{-\zeta k}.
\end{equation}
We now complete the estimate by choosing  \( k \) appropriately and taking advantage of the fact that we can also choose \( \alpha \) arbitrarily small. Indeed, if \( \varphi\in \mathcal H_{\alpha'} \) then \( \varphi \in \mathcal H_{\alpha} \) for all \( \alpha\in (0, \alpha') \). We aim to obtain an upper bound of the order of \( n^{-\beta+\gamma} \) and thus require  that the two inequalities
\[
 n e^{-\zeta k}  \lesssim
 n^{-\beta+\gamma}\quad\text{ and } \quad
  k^{2q}e^{\alpha k}\lesssim n^{\gamma}
  \]
  aere simultaneously satisfied.
   We will show that this can be achieved by fixing a sufficiently small \( \alpha \) and then choosing \( k, n \) sufficiently large.
 First observe  that
 \[
k \geq \frac{\beta+1-\gamma}{\zeta} \log n \quad \Rightarrow \quad
 ne^{-\zeta k} \leq n^{-\beta+\gamma}
  \]
  and
  \[
  \alpha k + 2q\log k \leq \log \alpha  +  \gamma \log n
 \quad \Rightarrow \quad
 \frac 1\alpha k^{2q} e^{\alpha k} \leq n^{\gamma}.
     \]
 Now for any fixed \( \alpha \) and \( k=k(\alpha) \) sufficiently large, we have
 \( \alpha k + 2q\log k \leq 2\alpha k \); also for \( n=n(\alpha)  \) sufficiently large we have \( \log\alpha + \gamma\log n \leq \frac \gamma 2 \log n \).
 Therefore we can write the one-sided implication
  \[  k \leq \frac{\gamma}{2\alpha}\log n \quad \Rightarrow \quad
 \frac 1\alpha k^{2q} e^{\alpha k} \leq n^{\gamma}
  \]
Thus it is enough to show that for \( \alpha  \) sufficiently small we have
   \[
   \frac{\beta+1-\gamma}{\zeta} \log n \leq \frac{\gamma}{2\alpha}\log n.
    \]
This is clearly true and in fact we can choose the explicit value
 \[
 \alpha= \frac{\gamma \zeta}{2(\beta + 1 - \gamma)}.
  \]
 This completes the proof in the polynomial case.

We now consider the stretched exponential case. Substituting the estimates of Proposition~\ref{lem:strexp} and Lemma~\ref{lem:Aik} into \eqref{ldder}
we get
\begin{equation}\label{eq:noname2}
 \mu\left(\frac{1}{n}|S_{n}\phi_{i}(x)|> \epsilon\right)
 \lesssim \|\phi_{i, k}\|_{\mathcal H_\alpha}\epsilon^{-1}
 e^{-\tau' n^{\theta'}} + n e^{-\zeta k}
 \end{equation}
where \( \theta'=\theta/(\theta+2) \) and
\(
 \tau'=\min\{\tau, {\epsilon^{2}}/{(162\|\phi_{i,k}\|^2_{\infty})} \}
 \). Notice that taking \( k \) sufficiently large we have in fact
\( \tau'= {\epsilon^{2}}/{(162 k^{2})} \),
and therefore, using the bound on the H\"older norm from Lemma \ref{lem:noname}
and substituting into \eqref{eq:noname2} we have
\[
 \mu\left(\frac{1}{n}|S_{n}\phi_{i}(x)|> \epsilon\right)
 \lesssim
e^{\alpha k} e^{- \epsilon^{2} n^{\theta'}/(162 k^{2})} + n e^{-\zeta k}.
\]
We recall once again that the constant implicit in the inequality \( \lesssim \) is allowed to depend on \( \epsilon \) and on \( \alpha \), even though \( \alpha \) plays no special role in the stretched exponential case.
It is now just a question of making a convenient choice of \( k=k(n) \).
In this case we choose    \( k=n^{\frac{\theta'}{3}-\gamma} \)
and get
\[ e^{\alpha k} e^{- \epsilon^{2} n^{\theta'}/(162 k^{2})} + n e^{-\zeta k}
\leq
e^{(\alpha-\epsilon^{2}n^{3\gamma}/162)n^{\frac{\theta'}{3}-\gamma}}+
n e^{-\zeta n^{\frac{\theta'}{3}-\gamma}}
\]
Now just observe that for any given \( \epsilon \), as long as  \( n \) is sufficiently large we have \( \alpha-\epsilon^{2}n^{3\gamma}/162 < - \zeta \). Since \( \gamma \) can also be chosen arbitrarily small, we obtain the proof of Proposition \ref{prop:DC=>LD-nonempty-CS} in the stretched exponential case.

\end{proof}

\appendix

\section{Special operators and martingales}\label{ap:A}

\subsection{Perron-Frobenius and Koopman operators}\label{subsec:PF-Koop-CE}
Let $(M,\mathcal M, \mu)$ be a probability measure space and $f\colon M\to M$ a measurable map (not necessarily preserving $\mu$).
We say that $f$ is \emph{nonsingular} with respect to $\mu$ if $f_*\nu\ll\mu$ whenever $\nu\ll\mu$.
Given $\varphi\in L^1(\mu)$,  the (signed) measure $\nu_\varphi$ on $\mathcal M$, defined for each
$A\in \mathcal M$ as $$\nu_\varphi(A)=\int_A \varphi d\mu,$$  is clearly absolutely continuous with respect to $\mu$.
 % Using the Radon-Nikodym Theorem w
 Using the nonsingularity of $f$ we define
 the \emph{Perron-Frobenius operator} $P_\mu:L^1(\mu)\to L^1(\mu)$ by
\begin{equation*}
\label{def:Perron-Frobenius-operator}
 P_\mu\varphi=\frac{df_*\nu_\varphi}{d\mu}.
\end{equation*}
The \emph{Koopman operator}  $U_\mu:L^\infty(\mu)\to L^\infty(\mu)$
is  defined by
\begin{equation*}
\label{def:Koopman-operator}
U_\mu\varphi=\varphi\circ f.
\end{equation*}
Given $\mathcal A$  a sub-$\sigma$-algebra of $\mathcal M$ and $\varphi\in L^1(\mu)$, the (signed) measure $\nu_\varphi^\mathcal A$ on $\mathcal A$, defined for each $A\in\mathcal A$ as
$$
\nu_\varphi^\mathcal A(A)=\int_A \varphi d\mu,
$$  is clearly absolutely continuous with respect to $ \mu|_\mathcal A$.
%where $\nu|_\mathcal C$ denotes the restriction of the measure $\nu$ to the sub-$\sigma$-algebra $\mathcal C$.
We finally define the \emph{conditional expectation} $\E_\mu(\cdot|\mathcal A):L^1(\mu)\to L^1(\mu|_{\mathcal A})$ as
\begin{equation*}
 \label{def:conditional-expectation}
 \E_\mu(\varphi|\mathcal A)=\frac{d\nu_\varphi^\mathcal A}{d\mu|_\mathcal A}.
\end{equation*}
%as the Radon-Nikodym derivative of $\nu_\varphi^\mathcal C$ with respect to $\nu|_\mathcal C$.
Observe that $\E_\mu(\varphi|\mathcal A)$ is the unique $\mathcal A$-measurable function such that for each $A\in\mathcal A$
\begin{equation*}
 \label{eq:cond-exp-property}
 \int_A \E_\mu(\varphi|\mathcal A)d\mu=\int_A \varphi d\mu.
\end{equation*}
Perron-Frobenius and  Koopman  operators enjoy some well-known properties that we collect in (P1)-(P5) below; see e.g. \cite[Chapter~4]{GB}. We observe that in the first two properties we do not need invariance of the measure $\mu$. For all $\varphi\in L^1(\mu)$ we have
\begin{enumerate}

 \item[(P1)]   $\int P_\mu\varphi\, d\mu=\int \varphi d\mu$;

 \item[(P2)]
   $\int (P_\mu\varphi) \psi d\mu=\int \varphi (U_\mu\psi) d\mu$ for all $\psi\in L^\infty(\mu)$.

\end{enumerate}
Moreover, if $\mu$ is $f$-invariant, then for all $\varphi\in L^1(\mu)$ we have
\begin{enumerate}
\setcounter{enumi}{4}
 \item[(P3)]  \label{PF-identity}  $P_\mu  U_\mu\varphi=\varphi$;
   \item[(P4)]  \label{PF-conditional-expectation} %$U_\mu P_\mu(\varphi)=\E_\mu(\varphi|f^{-1}(\B))$ and
   $U_\mu^n  P_\mu^n \varphi=\E_\mu(\varphi|f^{-n}(\mathcal M))$ for all $n\ge 1$;
   \item[(P5)]  \label{PF-contraction-all-norms} $\|P_\mu\varphi\|_p\leq \|\varphi\|_p$\,  whenever $\varphi\in L^p(\mu)$ for some $1\leq p\leq\infty$.
\end{enumerate}

\subsection{Filtrations and martingale differences}
Consider a sequence of $\sigma$-algebras $\{\mathcal F_i\}_{i\in\N}$ which forms a \emph{filtration}, meaning that $\mathcal F_i\subset\mathcal F_{i+1}$ for all $i\in\mathbb N$. We say that a sequence of random variables  $\{X_i\}_{i\in\N}$ is \emph{adapted to a filtration} $\{\mathcal F_i\}_{i\in\N}$ if each $X_i$ is measurable with respect to~$\mathcal F_i$.
The following result follows from \cite{Ri} and was drawn in the present formulation from \cite[Proposition~7]{MP}.
\begin{theorem}[Rio]
\label{th:Rio}
 Let $\{X_i\}_{i\in \mathbb N}$ be a sequence of square-integrable random variables adapted to a filtration $\{\mathcal F_i\}_{i\in \mathbb N}$. For all  $1\le p<\infty$ we have
 \[
 \|X_1+\ldots+X_n\|_{2p}^{2 }\leq  4p\sum_{i=1}^n \max_{i\leq u\leq n}\left\|X_i\sum_{k=i}^u \E(X_k|\mathcal F_i)\right\|_p .
 \]
\end{theorem}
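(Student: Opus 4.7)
The plan is to reduce the inequality to an algebraic identity for $S_{n}^{2}$, then extract the Rio-type bound by combining $L^{p}$--$L^{q}$ duality with the tower property of conditional expectation and a stopping-time argument that introduces the $\max_{i\le u\le n}$.

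First, I would set $T_{i}:=S_{n}-S_{i-1}=X_{i}+\cdots+X_{n}$, expand $S_{n}^{2}=\sum_{i,j}X_{i}X_{j}$, and regroup the off-diagonal contribution via $\sum_{i<j}X_{i}X_{j}=\sum_{i}X_{i}(T_{i}-X_{i})$. This yields the identity
\[
S_{n}^{2} \;=\; 2\sum_{i=1}^{n}X_{i}T_{i}\;-\;\sum_{i=1}^{n}X_{i}^{2},
\]
and since the quadratic-variation term is non-negative we obtain the pointwise bound $S_{n}^{2}\le 2\sum_{i}X_{i}T_{i}$, so that $\|S_{n}\|_{2p}^{2}=\|S_{n}^{2}\|_{p}\le 2\bigl\|\sum_{i}X_{i}T_{i}\bigr\|_{p}$. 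It remains to prove $\bigl\|\sum_{i}X_{i}T_{i}\bigr\|_{p}\le 2p\sum_{i}\max_{i\le u\le n}\bigl\|X_{i}\sum_{k=i}^{u}\E(X_{k}|\mathcal F_{i})\bigr\|_{p}$.

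Second, I would appeal to $L^{p}$--$L^{q}$ duality with $1/p+1/q=1$ and estimate $\E\bigl(Z\sum_{i}X_{i}T_{i}\bigr)$ uniformly over $Z$ in the unit ball of $L^{q}$. The adaptedness of $X_{i}$ to $\mathcal F_{i}$ together with the tower property give
\[
\E(ZX_{i}T_{i}) \;=\; \sum_{k=i}^{n}\E\bigl(X_{i}\,\E(ZX_{k}|\mathcal F_{i})\bigr),
\]
and since $X_{k}$ is $\mathcal F_{k}$-measurable one rewrites $\E(ZX_{k}|\mathcal F_{i})=\E(X_{k}\E(Z|\mathcal F_{k})|\mathcal F_{i})$. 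A Doob-type maximal inequality applied to the martingale $(\E(Z|\mathcal F_{k}))_{k\ge i}$ contributes a factor of order $p$ and allows the conditional expectations $\E(X_{k}|\mathcal F_{i})$ to be extracted, while a stopping-time argument (choosing, for each $i$, an $\mathcal F_{i}$-measurable index $U_{i}\in\{i,\ldots,n\}$ that essentially realises the maximum on the right-hand side, and then Abel-summing) rearranges the double sum so that the truncated partial sums $\sum_{k=i}^{U_{i}}\E(X_{k}|\mathcal F_{i})$, rather than the full tail $\sum_{k=i}^{n}$, appear inside the norm. Multiplying the factor $2$ from the algebraic identity by the factor $2p$ from the duality/maximal-inequality step would give the stated constant $4p$.

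I expect this last step to be the main obstacle. A naive bound $\bigl\|\sum_{i}X_{i}T_{i}\bigr\|_{p}\le\sum_{i}\|X_{i}T_{i}\|_{p}$ is useless, because $T_{i}$ is not $\mathcal F_{i}$-measurable and its $L^{p}$-norm is not controlled by the conditional expectations that must appear on the right-hand side; making the stopping-time truncation compatible with $L^{p}$--$L^{q}$ duality while keeping the constant linear in $p$ is precisely the point of Rio's argument in \cite{Ri}, in the refined form of \cite[Proposition~7]{MP}. For the purposes of this paper, what really matters is the linear-in-$p$ dependence of the constant: it is this feature that permits the choice $q=q(n)\to\infty$ with $n$ in the polynomial case of Proposition~\ref{lem:poly} without destroying the final $n^{-\beta}$ large-deviation rate.
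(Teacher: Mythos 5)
The paper does not prove Theorem~\ref{th:Rio}: it is quoted from \cite{Ri} in the formulation of \cite[Proposition~7]{MP} and used as a black box, so there is no in-paper argument to compare your sketch against. Assessed on its own terms, your algebraic reduction is correct: writing $T_i := X_i + \cdots + X_n$, the identity $(X_1+\cdots+X_n)^2 = 2\sum_{i=1}^n X_i T_i - \sum_{i=1}^n X_i^2$ holds exactly; since $\sum_i X_i^2\ge 0$ this gives the pointwise bound $0\le (X_1+\cdots+X_n)^2\le 2\sum_i X_i T_i$, and with $\|X_1+\cdots+X_n\|_{2p}^2 = \|(X_1+\cdots+X_n)^2\|_p$ the theorem reduces to $\|\sum_i X_i T_i\|_p \le 2p\sum_i\max_{i\le u\le n}\|X_i\sum_{k=i}^u\E(X_k|\mathcal F_i)\|_p$. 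You are also right that a term-by-term triangle bound cannot work here, since $T_i$ is not $\mathcal F_i$-measurable.

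The remaining step is a genuine gap, which you acknowledge. The identity $\E(ZX_k|\mathcal F_i) = \E(X_k\E(Z|\mathcal F_k)\,|\,\mathcal F_i)$ is correct, but your sketch does not show how the martingale $(\E(Z|\mathcal F_k))_{k\ge i}$ is actually decoupled from $X_k$ inside the $\mathcal F_i$-conditional expectation, how the truncated partial sums $\sum_{k=i}^u$ come to replace the full tails $\sum_{k=i}^n$, or where the factor $2$ beyond Doob's constant $q/(q-1)=p$ arises. Deferring all three to the very references the paper itself cites is honest, but it means you have produced a correct reduction and an informed description of the strategy rather than a proof. One small inaccuracy in your closing remark: in Proposition~\ref{lem:poly} the exponent $q$ is a fixed constant greater than $\max\{1,\beta\}$ and is not sent to infinity with $n$; the factor $4q$ from Rio's inequality is absorbed into an implicit constant there, so the linear-in-$p$ dependence, while a genuine feature of the inequality, is not exploited in the way you describe.
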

We say that random variables  $\{X_i\}_{i\in\N}$ form a sequence of \emph{martingale differences}
with respect to a filtration $\{\mathcal F_i\}_{i\in\N}$ if the sequence is adapted to the filtration and
\begin{equation}
\label{eq:def-martingale-diff}
\E(X_1)=0,\qquad \E(X_{i+1}|\mathcal F_{i})=0,\quad\forall i\ge 1.
\end{equation}
The following result follows from \cite{Az} and  \cite{Ho} and it can be found in the present formulation in~\cite[Theorem~3.1]{LV}.
\begin{theorem}[Azuma-Hoeffding]
\label{th:Azuma-Hoeffding}
 Let $\{X_i\}_{i\in \mathbb N}$ be a sequence of martingale differences.
 If there is $a>0$ such that $\|X_i\|_\infty<a$ for all $1\le i\le n$, then for all $b\in\mathbb R$ we have
 $$\mu\left(\sum_{i=1}^n X_i\geq nb\right)
 \leq\e^{-n\frac{b^2}{2a^2}}.$$
\end{theorem}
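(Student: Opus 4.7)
The plan is to combine the standard Chernoff exponential--Markov bound with a conditional version of Hoeffding's lemma, then optimise a free parameter. I shall focus on the nontrivial regime $b>0$; for $b=0$ the estimate is trivial (the right-hand side equals $1$), and the case $b<0$ is only meaningful after a re-interpretation.

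First I would apply Markov's inequality to the monotone function $e^{t\,\cdot}$: for every $t>0$, setting $S_n=X_1+\cdots+X_n$,
\[
\mu(S_n\geq nb)\;=\;\mu\bigl(e^{tS_n}\geq e^{tnb}\bigr)\;\leq\;e^{-tnb}\,\E\bigl[e^{tS_n}\bigr].
\]
Everything then reduces to controlling the moment generating function $\E[e^{tS_n}]$, and this is the step where the martingale-difference structure is actually used.

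The key auxiliary estimate is a conditional Hoeffding lemma: if $Y$ is a random variable with $\|Y\|_\infty<a$ and $\E(Y\mid\mathcal G)=0$ for some sub-$\sigma$-algebra $\mathcal G$, then $\E(e^{tY}\mid\mathcal G)\leq e^{t^2a^2/2}$. I would prove this by the classical convexity trick: since $y\mapsto e^{ty}$ is convex, for $y\in[-a,a]$ one has
\[
e^{ty}\;\leq\;\tfrac{a-y}{2a}\,e^{-ta}+\tfrac{a+y}{2a}\,e^{ta},
\]
so, taking conditional expectation and using $\E(Y\mid\mathcal G)=0$, one obtains $\E(e^{tY}\mid\mathcal G)\leq\cosh(ta)\leq e^{t^2a^2/2}$, the last inequality being a termwise comparison of Taylor series. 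Applying this with $Y=X_i$ and $\mathcal G=\mathcal F_{i-1}$ (for which $\E(X_i\mid\mathcal F_{i-1})=0$ by the martingale-difference assumption) and using the tower property,
\[
\E\bigl[e^{tS_n}\bigr]\;=\;\E\Bigl[e^{tS_{n-1}}\,\E\bigl(e^{tX_n}\mid\mathcal F_{n-1}\bigr)\Bigr]\;\leq\;e^{t^2a^2/2}\,\E\bigl[e^{tS_{n-1}}\bigr],
\]
and induction on $n$ yields $\E[e^{tS_n}]\leq e^{nt^2a^2/2}$.

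Combining the two bounds gives $\mu(S_n\geq nb)\leq e^{-tnb+nt^2a^2/2}$ for every $t>0$; minimising the quadratic exponent in $t$ (the optimal choice is $t=b/a^2$) produces the desired bound $e^{-nb^2/(2a^2)}$. The argument carries no dynamical content and is essentially routine: the only step requiring even a small amount of care is the conditional Hoeffding lemma, specifically the convexity bound that leads to $\cosh(ta)$ and the series comparison $\cosh(ta)\leq e^{t^2a^2/2}$. I do not anticipate any genuine obstacle.
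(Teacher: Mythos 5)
Your proof is correct and is the standard Chernoff/Hoeffding argument: Markov's inequality applied to $e^{tS_n}$, a conditional Hoeffding lemma $\E(e^{tX_i}\mid\mathcal F_{i-1})\leq\cosh(ta)\leq e^{t^2a^2/2}$ obtained via convexity of $e^{ty}$ on $[-a,a]$ and termwise comparison of the Taylor series of $\cosh$, the tower property to telescope down to $\E[e^{tS_n}]\leq e^{nt^2a^2/2}$, and optimisation at $t=b/a^2$. The paper does not prove this result itself; it simply cites Azuma (1967), Hoeffding (1963), and Lesigne--Voln\'y [LV, Thm.\ 3.1] for the statement, so there is nothing to compare against on the author side---your proposal supplies a self-contained proof where the paper gives none. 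Two small remarks. First, for the base case $i=1$ the paper's definition of martingale differences only requires $\E(X_1)=0$, not a conditional version; your argument still works by conditioning on the trivial $\sigma$-algebra, but it is worth saying so explicitly rather than writing $\E(X_1\mid\mathcal F_0)=0$, since $\mathcal F_0$ is not part of the filtration. Second, you are right to be suspicious of the phrase ``for all $b\in\mathbb R$'' in the paper's statement: the bound $e^{-nb^2/(2a^2)}$ is genuinely false for $b<0$ (take $b=-1$, $a=1$, $n$ large; the left side is $1$, the right side tends to $0$), so the statement should read $b\geq 0$ (or be phrased with $|S_n|$ and a factor $2$). This is harmless for the paper, which only applies the inequality with $b=\epsilon/3$ or $b=\epsilon/2$, both positive.
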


\section{Piecewise expanding maps}\label{ap.pecs}

In Theorem \ref{th:exp}  we  consider decay of correlations for observables in a Banach space $\mathcal B$ against observables in $L^1$. In Theorem \ref{th:piecewise} we show that this holds for systems satisfying some general conditions on the Perron-Frobenius operator.
In this appendix we  give more explicit examples of dynamical systems satisfying these conditions. As a consequence, we obtain also exponential large deviations for all these  systems.

\subsection{One-dimensional maps}
The first example is given by  $C^1$ piecewise uniformly expanding maps $f$ on the countable partition ${\mathfrak A}$ of the unit interval $M=[0,1]$, and
verifying the Adler property
$$\sup_{A\in{\mathfrak A}} \sup_{x\in A} \frac{|f''(x)|}{(f'x)^2}<\infty.
$$ In this  case the Lasota-Yorke inequality holds by taking $\mathcal B$
  as
  the space of functions $\varphi$ on the  interval
with bounded total variation $\Huge V_{[0,1]}\varphi$. The corresponding Banach
norm will be given by the sum of  $\Huge V_{[0,1]} \varphi$ plus the
$L^1(m)$ norm of $\varphi$ and this norm is adapted to   $L^1(m)$;
moreover the Banach space just constructed is an algebra.

 Finally, whenever the images under $f$  of the elements in ${\mathfrak A}$ coincide with the whole space $[0,1]$ (Markovian case), the density of the
  acip is bounded from below by a strictly positive constant; see e.g. \cite{AB}. In the general non-Markovian
  situation the  positivity of the density will follow whenever the support of the density will be the whole
  interval (we use here a result by Kowalski \cite{Ko} and Keller \cite{Kel1} which states that if an invariant density $\rho$ is
  lower semicontinuous, then it admits a constant $a>0$ such that $\rho|_{\mbox{supp}\rho}\ge a)$.

 \subsection{Markov maps} The second example generalizes the Markovian case of the previous example. Suppose $\mathfrak{A}$ is a measurable partition of $M$ (not necessarily a Riemannian manifold)  endowed with a probability measure  $m$ on a $\sigma$-algebra  $\mathcal{M}$.
 Let $f: M\to M$ be a measurable map such that $$f(A)\in \sigma(\mathfrak{A}) \quad\text{(mod $m$),} \quad\text{for all $A\in \mathfrak A$,}$$ where $\sigma(\mathfrak A)$ stands for the $\sigma$-algebra generated by $\mathfrak A$.
 We also suppose that $\mathfrak A$ generates $\mathcal{M}$ under $f$ in the sense that
 $\sigma(\bigvee_{n=0}^\infty f^{-n}(\mathfrak A))= \mathcal M$.
 Assume moreover that $f|_A$ is invertible and nonsingular for all $A\in \mathfrak A$. This allows us to define for each $A\in \bigvee_{j=0}^{n-1} f^{-j}(\mathfrak A)$  the inverse branches $g_{A,n}: f^n(A)\rightarrow A$  and the Radon-Nykodym derivaties $\rho_{A,n}={dm\circ g_{A,n}}/{dm}$.   We  assume the following properties:
 \begin{enumerate}
       \item {mixing}: $\forall A,B\in\mathfrak A\,\, \exists n_0\ge0:\, \, f^n(A)\supset B,\,\forall n\ge n_0$;
       \item {big images}: $\inf_{A\in \mathfrak A}m(fA)>0$;
       \item {bounded distortion}: $\exists C>0\,\, \forall n\ge 1\,\,\forall A\in \bigvee_{j=0}^{n-1} f^{-j}(\mathfrak A)\,\, \forall x,y\in f^n(A)$
       $$\left|\frac{\rho_{A,n}(x)}{\rho_{A,n}(y)}-1\right|\le C \theta^{s(x,y)},$$
     \end{enumerate}
 where $\theta$ is some real number in $(0,1)$ and $s(x,y)$ is the separation time defined as in Definition~\ref{def:inducing-scheme}. For these systems we consider the functional space of piecewise Lipschitz functions defined in this way:
 $\varphi:M\rightarrow \mathbb{R}$  is Lipschitz on the set $A\subset M$ if the following seminorm is finite
$$
D_A\varphi\equiv \sup_{x,y\in A}\frac{|\varphi(x)-\varphi(y)|}{\theta^{s(x,y)}}<\infty.
$$
Letting $\mathfrak{B}$ be the partition such that $\sigma(f(\mathfrak A))=\sigma(\mathfrak B)$, we define $D_{\mathfrak B}\varphi=\sup_{A\in \mathfrak B}D_A\varphi$.  Finally we define $$\mathcal L=\{\varphi\in L^{\infty}(m): D_{\mathfrak B}\varphi<\infty\}, $$ equipped with the norm
$$
\|\varphi\|_{\mathcal L}:= \|\varphi\|_{L^{\infty}(m)}+D_{\mathfrak B}\varphi.
$$
This norm is adapted to $L^1(m)$.

On the space $\mathcal L$ the Perron-Frobenius operator satisfies the Lasota-Yorke inequality and the density of the invariant measure will be $m$-almost everywhere bounded away from zero; see \cite{AD}.

\subsection{Multidimensional maps}  The third interesting example is given by multidimensional piecewise uniformly expanding maps for which we will use the space of quasi-H\"older functions   described below. Since we are going to prove for such maps a few apparently new results, we need to define them carefully; we would like to stress first that Markov maps are a special case of them. We follow here the definition proposed by Saussol \cite{Sa}; these maps have also been investigated by Blank \cite{Bl}, Buzzi \cite{Bu}, Buzzi and Keller \cite{BK} and Tsuji \cite{Ts}; the situation where the expansion is not anymore uniform has been investigated in the paper \cite{HV}.

Let $M\subset \Bbb R^N$ be a compact subset with
$\overline{\interior M}=M$ and $f: M\to M$.
 For $A\subset M$ and $\e>0$ we put $B_{\e}(A)=\{x\in \Bbb R^N:
d(x, A)\le \e \}$, where  $d$ be the Euclidean distance in $\Bbb R^N$. Assume that there exist at most countably many   disjoint open sets $U_i$  such that
 $m(M\setminus\bigcup_{i=1} {U_i})=0$, where $m$ denotes Lebesgue measure in the Borel sets of $\Bbb R^N$. Assume moreover that there are open sets  $\widetilde U_i\supset \overline{U_i}$ and $C^{1+\alpha}$ maps $f_i: \widetilde U_i\to\Bbb R^N$ such that $f_i|_{U_i}=f|_{U_i}$ for each $i$. Suppose that there are constants $c,\varepsilon_1>0$ and $0<\alpha<1$ such that the following hold:
\begin{enumerate}
\item
%$f_i:=f|_{U_i}: U_i\to M$ is $C^{1+\a}$, \ and $f_i$ can be
%extended to a $C^{1+\a}$ map $f_i: \widetilde {U_i}\to M$ such that
$f_i(\widetilde U_i)\supset B_{\e_1}(f(U_i))$ for each $i$;
\item
for each $i$ and $x, y\in f(U_i)$ with
$d(x,y)\le \e_1$,
$$
\bigl|\det Df_i^{-1}(x)-\det Df_i^{-1}(y)\bigr| \le c |\det
Df_i^{-1}(x)|d(x,y)^\a;
$$
\item there exists $s=s(f)<1$ such that
$$
\sup_i\sup_{x\in f_i(\widetilde{U}_i)} \left\|Df_i^{-1}(x)\right\|<s;
$$
\item each $\partial U_i$ is a codimension one embedded compact $C^1$  submanifold and
\begin{equation}\label{ineq:esq}
s^{\alpha}+\cfrac{4s}{1-s}Y(f)\cfrac{\gamma_{N-1}}{\gamma_{N}}<1,
\end{equation}
where $Y(f)=\sup_{x}\sum_i \# \left\{\text{smooth pieces
intersecting $\partial U_i$ containing $x$}\right\}$
%is the
%maximal number of smooth components of the boundaries that can
%meet in one point
and $\gamma_N$ is the
volume of the unit ball in $\mathbb{R}^N$. %We
%ask that $\eta_0(f)<1$.
\end{enumerate}
According to \cite{Sa}, condition \eqref{ineq:esq} can be weakened. We nevertheless keep that condition which is particularly simple to handle with when the boundaries of the $U_i$ are smooth.
Given a Borel set $\Omega\subset M$, we define the oscillation of $\varphi\in L^1(m)$ over
$\Omega$ as
$$
\osc(\varphi,\Omega) := \Esup\varphi|_\Omega - \Einf\varphi|_\Omega.
$$
Letting $B_{\epsilon}(x)$ denote the ball of radius $\epsilon$ around
the point $x$, we get a measurable function $x\rightarrow
\mbox{osc}(\varphi, \ B_{\epsilon}(x))$.
Given $0<\a <1$ and $\e_0>0$, we define the $\alpha$-seminorm of $\varphi$ as
\begin{eqnarray}\label{seminorm}
|\varphi|_{\alpha} = \sup_{0<\epsilon\le\epsilon_0}
\epsilon^{-\alpha} \int_{\Bbb R^N} \mbox{osc}(\varphi,
B_{\epsilon}(x)) dm(x).
\end{eqnarray}
We consider the space of the functions
with bounded $\alpha$-seminorm
\begin{eqnarray}\label{norm}
V_{\alpha} =  \left\{ \varphi\in L^1(m): |\varphi|_{\alpha} <\infty \right\},
\end{eqnarray}
and equip $V_{\alpha}$ with the norm
\begin{eqnarray}\label{falphanorm}
\|\cdot\|_{\alpha} =
\parallel\cdot\parallel_{L^1(m)} + |\cdot|_{\alpha}.
\end{eqnarray}
 We remark that this space does not
depend on the choice of $\epsilon_0$ and
 $V_{\a}$ is a Banach space endowed with the norm $\parallel\cdot\parallel_{\alpha}$.
Moreover, according to Theorem~1.13 in \cite{Kel2}, the unit ball in
$V_\alpha$ is compact in $L^1(\mu)$.

The assumptions (1)-(4) above allow us to get a Lasota-Yorke inequality when the Perron-Frobenius operator is applied to
functions belonging to the space $V_\alpha$; see \cite{Bl} and \cite{Kel2}
for the introduction of such a space in the theory of dynamical systems.

\subsection{Decay of correlations}%{Proof of Theorem \ref{th:piecewise}}
\label{sec:piecewise} Here we prove Theorem~\ref{th:piecewise}.
It is well known that under conditions (1)-(4) in Section~\ref{se.perron}, the Ionescu-Tulcea-Marinescu theorem \cite{ITM} asserts that the operator $\p$ is quasi-compact and this implies  the existence of  an invariant probability measure
$\mu$ for the map $f$ which is absolutely continuous with respect to $m$ on $M$ and with density $h\in \mathcal B$.
The measure $\mu$ has a finite number of ergodic components, and it
is the ``unique greatest'' in the sense that any other
measure absolutely continuous with respect to $m$ is absolutely
continuous with respect to $\mu$. Moreover, $M$ is partitioned $\mu$ mod 0 into a finite number of measurable sets upon which a certain power of $f$ is mixing. Since we are mostly interested in the rate of decay of correlations, we will suppose that $M$ is the only mixing component for $f$. The iterates of the Perron-Frobenius operator enjoy  the following spectral decomposition:
 \begin{equation}\label{eq:piq}
   \p^n= \Pi + Q^n,
 \end{equation}
 where $\Pi$ projects
$\varphi\in \mathcal B$ into the fixed points of $\p$,
\begin{equation}\label{eq:pih}
 \Pi(\varphi)=h\int \varphi dm,
\end{equation}
and the linear operator $Q$ verifies
\begin{equation}\label{ineq:qn}
\|Q^n(\varphi)\|_B\le C''q^n \|\varphi\|_B,
\end{equation}
  where $C''>0$ and $0<q<1$ are constants depending on $f$.

Now, take $\varphi\in\mathcal B$ and
assume with no loss of generality that $\int\varphi d\mu=0$, or equivalently $\int\varphi hdm=0$, where $h=d\mu/dm$.
Since $h\in\mathcal B$, by property (5) above we have that $\varphi h\in \mathcal B$. Therefore, using \eqref{eq:piq}, \eqref{eq:pih}, \eqref{ineq:qn} and property (6) above, for any $\psi\in L^1(m)$ we have
\begin{align*}
  \left|\int \varphi \psi\circ f^n d\mu\right|&=\left|\int \psi \p^n(\varphi h) dm\right|\\
  %&=|\int_X\psi[\Pi(\phi h) + Q^n(\phi h)]dm|\\
  &\le\left|\int\psi Q^n(\varphi h)dm\right|\\
  &\le C'\|\psi\|_{L^1(m)}  \|Q^n(\varphi h)\|_{\mathcal B}\\
  &\le C' C''q^n ||\psi||_{L^1(m)}||h\varphi||_{\mathcal B}.
\end{align*}
%We now observe that the assumption on the existence of a Lebesgue a.e. strictly positive lower bound $h_m$ for the density and the fact that the density itself is in $L^{\infty}(m)$ (since it is in $V_B$) imply that $L^1(m)=L^1_{\mu}$.
%To satisfy Lemma \ref{lem:sum} below
%we need the $L^1_{\mu}$ norm of $\psi$ in the r.h.s. of \ref{DECM}. But
Recalling that by assumption there is $c>0$ such that $h\ge c$, it then follows that
$$\|\psi\|_{L^1(m)}=\int\frac{|\psi|}h  d\mu\le \frac{1}{c}\|\psi\|_{L^1(\mu)}.$$
Thus we have
$$\cv_{\mu}(\varphi, \psi\circ f^{n})\le  \frac{1}{c}C' C'' \|h\|_{\mathcal B}\,q^n,$$
which is obviously summable in $n$.

\subsection*{Acknowledgments}
We wish to thank Ian Melbourne for very valuable observations and
relevant suggestions. We are grateful to  Neil Dobbs, Carlangelo Liverani, Matt Nicol and  Mike Todd
for fruitful conversations and comments.

\end{document}